\definecolor{shadecolor}{gray}{0.9}
\definecolor{shadecolor}{gray}{0.9}
\let\hat\widehat
\let\tilde\widetilde
\newcommand{\Var}{\mbox{Var}\/}
\newcommand{\Sub}{\text{\it Sub}\/}
\newcommand{\reals}{{\mathbb R}}
\newcommand{\E}{{\mathbb E}}
\newcommand{\eps}{{\epsilon}}
\newcommand{\R}{{\mathbb R}}
\newcommand{\LL}{{\mathcal L}}
\newcommand{\II}{{\mathcal I}}
\renewcommand{\P}{{\mathbb P}}
\newcommand{\A}{{\mathcal{A}}}
\newcommand{\M}{{\mathcal{M}}}
\newcommand{\C}{{\mathcal{C}}}
\newcommand{\I}{{\mathcal{I}}}
\newcommand{\T}{{\mathcal{T}}}
\newcommand{\Z}{{\mathbb{Z}}}
\newcommand{\K}{{\mathcal{K}}}
\newcommand{\F}{{\cal F}}
\newcommand{\ham}{{\Upsilon}}
\def\qt#1{\qquad\text{#1}}
\def\argmin{\mathop{\rm argmin}}
\def\argmax{\mathop{\rm argmax}}
\newtheorem{theorem}{Theorem}[section]
\newtheorem{lemma}[theorem]{Lemma}
\newtheorem{corollary}[theorem]{Corollary}
\newtheorem{remark}{Remark}[section]
\numberwithin{equation}{section}
\begin{document}

\begin{frontmatter}
\title{Denoising Flows on Trees}
\runtitle{Denoising Flows on Trees}
\runauthor{Chatterjee and Lafferty}

\begin{aug}
\vskip10pt
\author{\fnms{Sabyasachi} \snm{Chatterjee${}^{*}$}\ead[label=e1]{sabyasachi@galton.uchicago.edu}}
\and
\author{\fnms{John}
  \snm{Lafferty${}^{*\dag}$}\ead[label=e3]{lafferty@galton.uchicago.edu}}
\vskip10pt
\address{${}^*$Department of Statistics\\
${}^\dag$Department of Computer Science\\
University of Chicago
\\[10pt]
\today\\[5pt]
\vskip10pt
}
\end{aug}

\begin{abstract}
We study the estimation of flows on trees, a structured generalization
of isotonic regression.  A tree flow is defined recursively as a
positive flow value into a node that is partitioned into an outgoing
flow to the children nodes, with some amount of the flow possibly
leaking outside.  We study the behavior of the least squares estimator
for flows, and the associated minimax lower bounds. We 
characterize the risk of the least squares estimator in two
regimes. In the first regime the diameter of the tree grows at most
logarithmically with the number of nodes.  In the second regime, the
tree contains many long paths.  The results are compared with known risk
bounds for isotonic regression.  
\end{abstract}
 
\vskip20pt 
\end{frontmatter}

\maketitle

\vskip10pt
\tikzset{circ/.style={circle, draw, fill=white, scale=1}}

\section{Introduction}

We study the problem of denoising tree flows, a graph-structured
generalization of isotonic regression.  In isotonic (monotonic)
regression, a sequence 
$Y_i = \mu_i + \epsilon_i$, for $i=1,\ldots, n$,
is a noisy observation of a monotonic sequence $\mu_1 \geq \mu_2
\geq \cdots \geq \mu_n$, with $\epsilon_i$ independent mean zero
random noise. Tree flows are a generalization of monotonic
sequences. Each node in a (rooted) tree is labeled with a value that can be
thought of as the incoming flow of some fluid. The flow is
partitioned and redirected to the children nodes. 
In a noisy flow, the observation at node $i$ is
$Y_i=\mu_i+\epsilon_i$, where $\mu_i$ is the true flow and
$\epsilon_i$ is mean zero random noise. 

Figure~\ref{fig:flow} illustrates a flow on a tree
with $n=9$ nodes.  The root node has an incoming flow of $6$ units; its left child receives a flow
of 3 and its right child receives a flow of $2$.  The
root node thus ``leaks'' a flow of $1$.  The node having a flow of
1 is seen to leak a flow of $\frac{1}{6}$. In general, the flow
$\mu_i$ at a node $i$ having children $\C(i)$ must satisfy $\mu_i \geq
0$ and the flow constraints
\begin{equation}
\mu_i \geq \sum_{j\in\C(i)} \mu_j.
\end{equation}
More explicitly, suppose that the tree $T_n$ has $n$ nodes;
the leaves of the tree are the nodes $i$ for which $\C(i)$ is empty.
The set of flows $\F(T_n)\subset \reals^n$ is the closed convex cone 
\begin{equation}
\F(T_n) = \Bigl\{\mu\in\reals^n \;:\; \mu_i \geq
\sum_{j\in\C(i)} \mu_j, \;\; \text{for all $i=1,2,\ldots, n$}\Bigr\}.
\label{eq:flowcons}
\end{equation}
By convention, $\mu_1$ will denote the flow to the root node,
and if $\C(i)=\emptyset$, meaning that node $i$ is a leaf, then $\sum_{j\in\C(i)} \mu_j = 0$. Thus
$\mu_i \geq 0$ for all $i=1,2\ldots, n$.

\begin{figure}[t]
\begin{center}
\begin{tikzpicture}
\node (l1) [circ]{6};
\node (l2) [circ, below left=2cm of l1]{3};
\node (l3)[circ, below right=2cm of l1]{2};
\node (l5)[circ, below left=1cm of l2]{$1$};
\node (l4)[circ, below right=1cm of l2]{$2$};
\node (l6)[circ, below left=1cm of l3]{$2$};
\node (l7)[circ, below right=1cm of l3]{$0$};
\node (l8)[circ, below left=1cm of l5, scale=.9]{$\frac{1}{2}$};
\node (l9)[circ, below right=1cm of l5, scale=.9]{$\frac{1}{3}$};
\draw[-] (l1) to node [auto] {} (l2);
\draw[-] (l1) to node [auto] {} (l3);
\draw[-] (l2) to node [auto] {} (l4);
\draw[-] (l2) to node [auto] {} (l5);
\draw[-] (l3) to node [auto] {} (l6);
\draw[-] (l3) to node [auto] {} (l7);
\draw[-] (l5) to node [auto] {} (l8);
\draw[-] (l5) to node [auto] {} (l9);
\end{tikzpicture}
\end{center}
\caption{An example flow. The value $\mu_j$ of a node $j$ is the incoming
  flow. The flow $\mu_j$ at a node
$j$ with two children $l$ and $r$ satisfies $\mu_{j} \geq \mu_{l} +
  \mu_{r}$.  The amount of flow \textit{leaked} at the node is
$\mu_j - \mu_l - \mu_r$. In the above example, the root has flow $6$ 
and leaks $1$ unit of flow. The node having flow $1$ leaks
$\frac{1}{6}$.
Flows generalize monotonic sequences since the flow along each path 
from the root to a leaf must be a nonincreasing sequence.}
\label{fig:flow}
\end{figure}
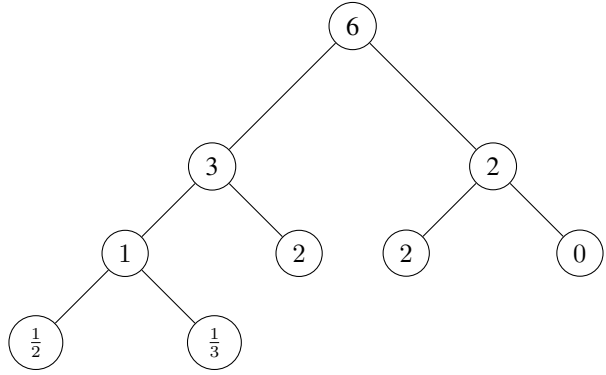

Noisy flows can be seen as arising naturally in certain applications. For
instance, suppose one seeks to estimate
the population of a certain species in a large geographic region. A
sampling survey may give a noisy estimate of the population $\mu_R$ in one
region $R$, and separate estimates might be obtained in a few nonoverlapping
subregions $S_1, S_2,\ldots, S_d$ with $S_i\subset R$ and $S_i \cap
S_j = \emptyset$.  Each subregion could be recursively partitioned.  
The populations are governed by the flow
constraint $\mu_R \geq \mu_{S_1} + \cdots + \mu_{S_d}$.
We refer to \cite{skinner89} for a treatment of classical survey sampling
and its connection to hierarchical modeling.

As another example, computer program profiling 
provides a set of techniques to analyze
the runtime behavior of a program, measuring
the time and storage used in different parts of the code
\citep{gprof,spivey04}.
Profiling software typically generates a tree (or 
a more general directed graph) showing the execution time
spent in different blocks of code. The flow constraints
pertain since the time spent in calls to a specific function must be greater than total
time spent in blocks of code that are reached from that function. 
Profiling is achieved by instrumenting the compiled code with
instructions to monitor its performance. But instrumentation
may change the performance characteristics of the program.
Statistical profilers use sampling to allow the
program to operate closer to its true execution behavior,
with fewer side-effects. More advanced statistical estimation,
such as the structured regression techniques we study
here, could enable more efficient sampling schemes.

Another example arises in hierarchical classification and information
retrieval with respect to a taxonomy of topics, as can occur in
collaborative filtering settings.  For a given search query or
document to be classified, each node in the taxonomy is assigned a
number, representing the relevance of the node's topic to the query.
For instance, the search query might be ESPN and the topic
representing sports might have children nodes basketball, baseball and
football.  If the relevance of the parent (sports) to the query (ESPN)
is required to be no less than the sum of the relevance values to the
children nodes (basketball, baseball and football), this represents a
flow constraint as defined in \eqref{eq:flowcons}. The use of such
constraints is called ``sum-based hierarchical smoothing'' by
\cite{benabbas2011efficient}. Such potential applications
notwithstanding, our main motivation is that 
flow estimation is a natural generalization of 
isotonic regression, which deserves study in its own right.

We consider the problem of estimating a flow $\mu \in \F(T_n)$
for a given tree $T_n$ from noisy observations 
\begin{equation}\label{eq:RegMdl}
Y_{i} = \mu_{i} + \eps_{i}, \qquad \mbox{for } i=1,\ldots, n
\end{equation}
where $\mu \in \F(T_n)$ is unknown and the random errors 
$\epsilon_{i} \sim N(0,\sigma^2)$ are independent with $\sigma^2$
unknown.  A natural estimator for $\mu$ is the least squares
estimator (LSE) $\hat{\mu}$, defined according to
\begin{equation}\label{eq:LSE}
\hat{\mu} := \argmin_{f \in \F(T_n)} \;\; \sum_{i = 1}^{n} (Y_{i} - f_{i})^2.  
\end{equation}
The LSE $\hat \mu$ is uniquely defined, as it is the  projection of $Y$ onto the closed convex set $\F(T_n) \subset \R^{n}$. 
We study the behavior of $\hat{\mu}$ under the squared error loss 
\begin{equation}\label{loss}
\ell(\mu,\mu^{\prime}) := \frac{1}{n} \sum_{i = 1}^{n} (\mu_{i} - \mu^{\prime}_{i})^2.
\end{equation}
The risk of any estimator $\tilde{\mu}$ of $\mu$ under the
squared loss function is given by 
\begin{equation}
R(\mu, \tilde{\mu}) := \E_{\mu} \ell(\tilde{\mu}, \mu),
\end{equation}
where $\E_{\mu}$ denotes the expectation taken with respect to $Y$
having the distribution given by~\eqref{eq:RegMdl}. 
In particular, $R(\mu, \hat{\mu})$ denotes the risk of the LSE.

We study the behavior of the LSE in a setting
where the number of nodes $n$ increases in a sequence of trees $T_n$.
The central statistical questions we investigate include the following.
\begin{itemize}
\item For a given sequence of trees $T_n$, what is the behavior of the
  risk of the least squares estimator $R(\hat{\mu},\mu)$?
Is it consistent, in the sense that $R(\hat{\mu},\mu) \longrightarrow
0$ as the number of nodes $n$ increases? If so, what is the rate of
convergence of the risk of the LSE $R(\mu, \hat{\mu})$? 
How does $R(\hat{\mu},\mu)$ depend on the choice of the sequence of trees?
\vskip10pt
\item What is the fundamental limit of estimation in the minimax
  sense? In other words, what is the scaling as $n\rightarrow\infty$
of the quantity 
\begin{equation}
\Delta_n = \inf_{\tilde{\mu}} \sup_{\mu \in \F(T_n)} R(\hat{\mu},\mu),
\end{equation}
and how does the minimax rate of estimation depend on the structure of
the underlying trees?
\end{itemize}
We provide some answers to these questions in this paper, which
appears to be the first time flow estimation has been studied
from a statistical perspective.

Flow estimation is a generalization of the well studied
isotonic vector estimation problem. In particular, if the tree is the path graph $L_n$,
the problem is to estimate $\mu = (\mu_1, \dots, \mu_n) \in \R^n$ from
observations
\begin{equation*}
  Y_i = \mu_i + \epsilon_i \qt{for $i = 1, \dots, n$}
\end{equation*}
under the constraint $\mu_1 \geq \dots \geq \mu_n \geq 0$. This is, of
course, a special case of univariate isotonic regression and has a
long history; see e.g.,~\cite{Brunk55,AyerEtAl55,vanEeden58}. The risk
of the LSE for isotonic regression has been studied by a number of authors, including
\citet{vdG90,vdG93,Donoho91,BM93,Wang96,MW00,Zhang02,chatterjee2015risk}. It
is shown by \cite{Zhang02} that the risk satisfies
\begin{equation}\label{motw}
R(\mu, \hat{\mu}) \leq C \left\{\left(\frac{\sigma^2
    V(\mu)}{n} \right)^{2/3} +  \frac{\sigma^2 \log n}{n} \right\},
\end{equation}
with $V(\mu) := \mu_1 - \mu_n$, where $C$ is a universal positive constant.
This result shows that the risk of $\hat{\mu}$ scales
as $n^{-2/3}$ provided $V(\mu)$ is
bounded from above by a constant; this is in fact 
the minimax rate of estimation in this problem (see
e.g.,~\citet{Zhang02}).

More broadly, flow denoising is an example
of graph-based signal estimation, a topic
that is of increasing recent interest. To mention a few recent results
in this vein, a lasso-type penalized estimator has
been proposed by~\citet{sharpnack2013graph} to estimate sparse
signals; \cite{wang2014trend} propose adapting trend
filtering ideas from nonparametric
regression \citep{tibshirani2014adaptive,kim2009ell_1}. 
Such approaches are based on penalized empirical risk, and require
a tuning parameter that can be difficult to set in practice.
In contrast, the LSE for flow denoising requires no tuning parameters;
in this way it resembles shape constrained estimation problems
such as convex regression or log-concave density estimation.

The following section presents our approach to investigating the
questions posed above for flow estimation on different families of
trees, and states our technical results. Our main finding is a
surprising gap between the rate of convergence for the least squares
estimator and the minimax rate over all possible estimators, where the
rate for the LSE is not in general monotonic with respect to the depth
of the tree. Section~\ref{sec:proofs} gives the proofs of these
results. Simulations supporting our analysis are provided in
Section~\ref{sec:simulations}.

\section{Results}\label{sec:results}

We analyze two main regimes, corresponding to different assumptions on
the family of trees $T_n$. In the first regime the depth, or diameter, of 
the tree stays bounded or grows at most logarithmically with the number
of nodes $n$. Our second regime of study is a family of trees
containing many long paths. The depth of the trees in this regime
grows at a polynomial rate.

\subsection{Bounded Depth Trees}

In this regime we sharply characterize the risk of the least squares
estimator.  Our first theorem gives an upper bound on the risk
$R(\hat{\mu},\mu)$ in terms of the tree depth (or height) $h_n$,
defined as the maximum graph distance from the root to a leaf. We
denote the Euclidean norm by $\|\cdot\|$, and let $\mu_1$ denote the
flow at the root for any $\mu\in\F(T_n)$.

\begin{theorem}\label{lse_upbd_shallow}
For any tree $T$ with height $h$,
the worst case risk of the the least squares estimator satisfies
\begin{equation}
\sup_{\mu \in \F(T): \mu_1 \leq V} \E \frac{1}{n}\|\hat{\mu} - \mu\|^2 \leq C \frac{1}{n}\Bigl(\sigma^2 h^2 \log \bigl(\frac{n}{h}\bigr) + h V \sigma \sqrt{\log\bigl(\frac{n}{h}\bigr)}\Bigr)
\end{equation}
for some universal constant $C$.
\end{theorem}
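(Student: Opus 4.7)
My plan is to apply Chatterjee's variational characterization of the risk of an LSE over a closed convex set. Setting
\[
\phi_\mu(t) \;=\; \E \sup_{\nu \in \F(T):\, \|\nu - \mu\|_2 \le t} \langle \eps, \nu - \mu\rangle,
\]
Chatterjee's theorem gives $\E\|\hat\mu - \mu\|_2^2 \le C t_\mu^2$, where $t_\mu$ is the positive value at which $\phi_\mu(t_\mu) = t_\mu^2/2$. Thus the task reduces to producing an upper bound on $\phi_\mu(t)$ of the form $A(V + t)$ for a suitable $A = A(\sigma, h, n)$; the two terms of the theorem will then emerge from the quadratic equation $A(V+t_\mu) = t_\mu^2/2$.

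The key structural observation is that $\F(T)$ is a polyhedral cone with exactly one extreme ray per node: for each $i$, letting $a_i \in \{0,1\}^n$ denote the indicator of the ancestor set of $i$ (inclusive of $i$), every $\nu \in \F(T)$ admits a representation $\nu = \sum_i \ell_i a_i$ with $\ell_i \ge 0$, and the root coordinate equals $\nu_1 = \sum_i \ell_i$. I would use this to localize: if $\|\nu - \mu\|_2 \le t$ for some $\nu \in \F(T)$, then $|\nu_1 - \mu_1|\le t$ forces $\nu_1 \le V+t$, so $\nu$ lies in the convex hull of $\{0\}\cup\{(V+t) a_i\}_{i=1}^n$. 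Since a linear functional on a convex hull is maximized at a vertex and $\E\langle \eps, \mu\rangle = 0$,
\[
\phi_\mu(t) \;\le\; (V+t)\, \E \max_{i} \langle \eps, a_i \rangle.
\]

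The remaining step is to estimate $\E\max_i \langle \eps, a_i\rangle$. Each $\langle \eps, a_i\rangle$ is centered Gaussian with variance $\sigma^2\|a_i\|_2^2 \le \sigma^2(h+1)$, and there are $n$ such indices; a level-by-level union bound over the tree (controlling the supremum at each depth $\ell$ as the max of at most $n$ Gaussians of standard deviation $\sigma\sqrt{\ell+1}$ and then maximizing over $\ell$) yields the clean form $\E\max_i\langle \eps, a_i\rangle \le C\sigma h\sqrt{\log(n/h)}$ in the regime $h \ll n$ of interest. With $A := C\sigma h\sqrt{\log(n/h)}$, the quadratic $A(V+t_\mu) = t_\mu^2/2$ gives $t_\mu^2 \lesssim A^2 + AV = C\bigl(\sigma^2 h^2 \log(n/h) + V\sigma h\sqrt{\log(n/h)}\bigr)$; dividing by $n$ matches the theorem. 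The main obstacle is the Gaussian maximum step: although a crude single-step bound $\sqrt{(h+1)\log n}$ would also suffice, obtaining the tree-adapted form $h\sqrt{\log(n/h)}$, which reflects the natural branching-random-walk scaling on trees of height $h$ and average level size $n/h$, requires carefully organizing the supremum by depth and exploiting the depth-dependent variance of each path sum.
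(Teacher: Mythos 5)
Your proposal is correct and rests on the same master tool as the paper (Chatterjee's variational bound via $f_\mu(t)=\E\sup_{\|\nu-\mu\|\le t}\langle Z,\nu-\mu\rangle-t^2/2$, with the localization $\nu_1\le V+t$ and the quadratic in $t$ at the end), but the way you control the Gaussian supremum is genuinely different. The paper splits $\langle Z,\nu-\mu\rangle$ by levels of the tree and uses the simplex constraint $\sum_{j\in L_i}\nu_j\le\mu_1+t$ within each level to bound level $i$'s contribution by $(\mu_1+t)\max_{j\in L_i}Z_j+\mu_1\max_{j\in L_i}(-Z_j)$, then sums the $h$ level maxima and applies Jensen to get $h\sqrt{\log(n/h)}$. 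You instead use the extreme-ray (leak) decomposition $\nu=\sum_i\ell_i a_i$, $\ell_i\ge 0$, $\sum_i\ell_i=\nu_1$ --- which is valid and is in fact the same parameterization the paper exploits later in its covering-number lemma --- to reduce everything to $\E\max_i\langle\eps,a_i\rangle$, a maximum of $n$ centered Gaussians of variance at most $\sigma^2(h+1)$. Your route buys something: the crude bound $\sigma\sqrt{2(h+1)\log n}$ already yields $t_\mu^2\lesssim\sigma^2h\log n+V\sigma\sqrt{h\log n}$, which implies the stated theorem (since $\log n\le Ch\log(n/h)$ for $h\le n/2$) and is actually \emph{sharper} than the paper's bound by roughly a factor of $h$ in the first term when $h$ grows. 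Two small cautions: (i) you should work with $\E\bigl[(\max_i\langle\eps,a_i\rangle)^+\bigr]$ since the hull contains $0$, a routine adjustment costing only $O(\sigma\sqrt{h})$; (ii) your proposed depth-organized refinement of the Gaussian maximum is the weakest link --- summing expected maxima over levels gives $\sum_\ell\sqrt{\ell\log|L_\ell|}\lesssim h^{3/2}\sqrt{\log(n/h)}$, not $h\sqrt{\log(n/h)}$ --- but this is harmless because, as you correctly note, the single-step bound suffices.
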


This risk bound holds generally, but it is 
mainly useful when the depth of the tree grows very slowly with $n$. For
example, for trees with depth growing logarithmically with
$n$, it implies an almost parametric rate of convergence
$\tilde{O}(1/n)$.  Examples include the complete binary tree and the star graph with $n$ vertices. 

Restricting to the case of bounded depth trees, with height $h_n \leq C$ where $C$ is a
constant, Theorem~\ref{lse_upbd_shallow} shows that the risk of the
least squares estimator scales according to $\sigma^2 {\log n}/{n} + V \sigma
{\sqrt{\log n}}/{n}$. Our next result is a minimax bound
in this setting.

\begin{theorem}\label{simplexminimax}
Let $\F_{n,V} \subset \R^n$ be the space of flows on trees of bounded
depth $h_n \leq C$, with root flow $\mu_1\leq V$. Then
for any $\epsilon > 0$, 
\begin{equation*}
\inf_{\tilde{\mu}} \sup_{\mu \in \F_{n,V}} \E \frac{1}{n} \|\tilde{\mu} - \mu\|^2 \geq 
\begin{cases}
\displaystyle c \frac{V^2}{n} & 0 < V \leq \sigma \sqrt{\log n}\\
\displaystyle c_{\epsilon} V \sigma \frac{\sqrt{\log n}}{n} & \sigma \sqrt{\log n} < V \leq \sigma n^{1 - \epsilon}\\
\displaystyle c \sigma^2 & V > n \sigma,
\end{cases} 
\end{equation*}
where $c$ is a universal constant and
$c_{\epsilon}$ is a constant that depends only on $\epsilon$.
\end{theorem}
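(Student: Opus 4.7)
The star graph $S_n$ with one root and $n-1$ leaves (depth one) belongs to the family of bounded-depth trees, so $\F(S_n)\cap\{\mu:\mu_1\leq V\}\subset \F_{n,V}$ and it suffices to build hard instances on $S_n$. On $S_n$ the flow constraints reduce to $\mu_1\geq\sum_{i=2}^n\mu_i$ together with $\mu_i\geq 0$ on each leaf, giving maximal flexibility for perturbations.

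\textbf{Small $V$ regime.} When $V\leq\sigma\sqrt{\log n}$, I would apply Fano's inequality with $n-1$ singleton hypotheses. For each leaf $i$, let $\mu^{(i)}$ satisfy $\mu_1=V$, $\mu_i=V$, and be zero elsewhere; these lie in $\F(S_n)$ with root flow exactly $V$. Pairwise squared distances equal $2V^2$, giving normalized loss separation $\geq V^2/n$, and pairwise Gaussian KL divergences equal $V^2/\sigma^2\leq\log n$. After rescaling the common value by a small constant so that $\max$KL is strictly below a fraction of $\log M=\log(n-1)$, Fano's lemma yields the $cV^2/n$ lower bound.

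\textbf{Middle $V$ regime.} When $\sigma\sqrt{\log n}<V\leq\sigma n^{1-\epsilon}$, the idea is to place mass $\beta\sigma\sqrt{\log n}$ on a $k$-sparse subset of leaves, where $k:=\lfloor V/(\beta\sigma\sqrt{\log n})\rfloor$ and $\beta=\beta(\epsilon)>0$ is a small constant to be chosen. By the Varshamov--Gilbert lemma, there is a family $\{S_1,\ldots,S_M\}$ of $k$-subsets of leaves with pairwise Hamming distance at least $k/2$ and $\log M\geq c_0 k\log(n/k)$; since $V\leq\sigma n^{1-\epsilon}$ forces $k\leq n^{1-\epsilon}$, one gets $\log(n/k)\geq\epsilon\log n$. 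Define $\mu^{(S_j)}$ with mass $\beta\sigma\sqrt{\log n}$ on every leaf in $S_j$ and root mass $k\beta\sigma\sqrt{\log n}\leq V$. Pairwise squared distances are at least $(k/2)\beta^2\sigma^2\log n\asymp\beta V\sigma\sqrt{\log n}$, while pairwise KL divergences are bounded by $\beta^2 k\log n\leq(\beta^2/(c_0\epsilon))\log M$. Choosing $\beta^2=c_0\epsilon/4$ verifies Fano's KL-versus-$\log M$ condition and produces the target $c_\epsilon V\sigma\sqrt{\log n}/n$ bound.

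\textbf{Large $V$ regime and main obstacle.} When $V>n\sigma$, I would use Assouad's cube lemma. Set $\mu^{(\tau)}_1=V$ and $\mu^{(\tau)}_i=V/(n-1)-c\sigma\tau_i$ on leaves, indexed by $\tau\in\{0,1\}^{n-1}$, where $c$ is a small constant. Leaf nonnegativity needs $V\geq(n-1)c\sigma$, which follows from $V>n\sigma$ for $c$ small, and the sum of leaves is at most $V$, so each $\mu^{(\tau)}$ lies in $\F(S_n)\cap\{\mu_1\leq V\}$. Adjacent hypotheses differ in exactly one coordinate by $c\sigma$, giving per-bit normalized loss $c^2\sigma^2/n$ and per-bit KL $c^2/2<1/2$, so Assouad yields $(n-1)c^2\sigma^2/n\asymp c^2\sigma^2$. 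The principal technical obstacle is the middle regime, where one must balance three quantities simultaneously: the root budget $k\beta\sigma\sqrt{\log n}\leq V$, the $L^2$ separation needed to reach the rate $V\sigma\sqrt{\log n}/n$, and the Varshamov--Gilbert growth $\log M\gtrsim k\log(n/k)$ against the Gaussian KL. The restriction $V\leq\sigma n^{1-\epsilon}$ is precisely what guarantees $\log(n/k)\gtrsim\epsilon\log n$ so that the KL is a constant fraction of $\log M$, which is the reason the prefactor $c_\epsilon$ degrades as $\epsilon\to 0$.
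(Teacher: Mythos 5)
Your approach is essentially the paper's: reduce to the star graph, use Varshamov--Gilbert plus Fano in the middle regime, and use a direct Fano packing (you: singletons; paper: VG with $k$ equal to a constant) in the small-$V$ regime. Your Assouad argument for the $V>n\sigma$ case is an extra; the paper's proof actually does not spell out that case, so your proposal is more complete there. Your rescaling fix in the small-$V$ regime is needed and correctly identified, and your accounting of the competing constraints in the middle regime (root budget, $\ell_2$ separation, $\log M$ vs.\ KL) is the same bookkeeping the paper does, with the same role for the $V\leq\sigma n^{1-\epsilon}$ hypothesis in forcing $\log(n/k)\gtrsim\epsilon\log n$.

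The one genuine gap is in your opening sentence, which asserts that \emph{it suffices} to build hard instances on the star graph because $S_n$ ``belongs to the family of bounded-depth trees.'' The theorem is meant to hold for \emph{any} sequence of bounded-depth trees $T_n$, and you do not get to choose $T_n$ to be the star; the minimax risk is over $\F(T_n)$ for the given tree, not over a union across trees. Your packing constructions therefore need to be transplanted into an arbitrary tree of height $h_n\leq C$. The paper does this by noting that some level $l$ of $T_n$ contains at least $n/C$ nodes, planting the $\{0,1\}$-valued packing on those nodes, setting the flow at deeper levels to zero, and propagating upward by summing children so that the flow constraints hold. Because the depth is bounded by $C$, the extra coordinates contribute at most a constant factor to both the squared distances and the KL divergences, so the rates are unchanged. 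Without this step your proof establishes the claimed rates only for the star graph, not for the general bounded-depth family stated in the theorem.
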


\vskip15pt
\begin{remark}
The trivial estimator $\hat\mu = Y$ incurs a
risk of $\sigma^2$, and becomes minimax rate optimal as $V$
increases. 
One expects that the minimax rate grows continuously with $V$, keeping
$n$ and $\sigma$ fixed.  The three cases above are a result of our
proof technique, which only allows $V$ to grow 
as $\sigma n^{1 - \epsilon}$ for some arbitrarily small, but fixed,
$\epsilon$. 
\end{remark}

In the bounded depth case, Theorem~\ref{lse_upbd_shallow} and Theorem~\ref{simplexminimax} 
together show that when $\sigma \sqrt{\log n} < V$, the minimax rate of estimation
is indeed $V \sigma \sqrt{\log n}/n$, and that this is
achieved by the LSE.
However, when $V \leq \sigma \sqrt{\log n}$,
the lower bound given in Theorem~\ref{simplexminimax} is $V^2/n$ which
is attained by the trivial estimator $\hat\mu = 0$.  
If the upper bound in Theorem~\ref{lse_upbd_shallow} is tight, it
indicates that the LSE may not be minimax rate optimal in the small
$V$ regime. The following result shows that this
is indeed the case.

\begin{theorem}\label{lse_lb_shallow}
Under the same assumptions as in Theorem~\ref{lse_upbd_shallow}, 
the worst case risk of the LSE satisfies the lower bound
\begin{equation*}
\sup_{\mu \in \F(T_n): \mu_1 \leq V} \E \frac{1}{n} \|\hat{\mu} - \mu\|^2 \geq C \frac{\sigma^2}{h} \frac{\log({n}/{h})}{n}.
\end{equation*}
\end{theorem}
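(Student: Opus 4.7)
The plan is to lower bound the supremum risk by evaluating it at the null flow $\mu^\star \equiv 0 \in \F(T_n)$, which trivially satisfies $\mu^\star_1 = 0 \leq V$. At $\mu^\star = 0$, the observations are $Y = \eps$ and $\hat\mu = \Pi_{\F(T_n)}(\eps)$. Because $\F(T_n)$ is a closed convex cone, the projection is positively homogeneous, so $\E_0 \|\hat\mu\|^2 = \sigma^2\, \E \|\Pi_{\F(T_n)}(g)\|^2$ with $g \sim N(0, I_n)$; moreover, for any subcone $K_0 \subseteq \F(T_n)$, the identity $\|g\|^2 = \|\Pi_K(g)\|^2 + \|g - \Pi_K(g)\|^2$ (valid for cones) combined with the minimality of $\Pi$ gives $\|\Pi_{\F(T_n)}(g)\|^2 \geq \|\Pi_{K_0}(g)\|^2$ pointwise. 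So it suffices to exhibit a subcone $K_0$ with $\E \|\Pi_{K_0}(g)\|^2 \gtrsim \log(n/h)/h$.

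The subcone I would use is built from path-indicators. By pigeonhole over the $h+1$ depth levels $\{0, 1, \dots, h\}$, some level $d^\star$ contains at least $k := \lceil n/(h+1)\rceil$ nodes $v_1, \dots, v_k$. Let $W_i \in \{0,1\}^n$ be the indicator of the path $P_i$ from the root down to $v_i$, so $\|W_i\|^2 = d^\star + 1$ and $\langle W_i, W_j\rangle = d_{ij} + 1$, where $d_{ij}$ is the depth of the least common ancestor of $v_i, v_j$. Define
\begin{equation*}
K_0 \,=\, \Bigl\{\,\sum_{i=1}^{k} c_i W_i \,:\, c_i \geq 0\,\Bigr\} \,\subset\, \reals^n.
\end{equation*}
To verify $K_0 \subseteq \F(T_n)$ note that at any node $u$ the resulting flow equals $\mu_u = \sum_i c_i \mathbf{1}[u \in P_i]$, while the aggregated child flow is $\sum_{w \in \C(u)} \mu_w = \sum_i c_i \mathbf{1}[u \text{ is a strict ancestor of } v_i]$. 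The difference, which is the leakage at $u$, equals $\sum_{i : v_i = u} c_i \geq 0$, so all flow constraints hold.

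To bound $\E \|\Pi_{K_0}(g)\|^2$ from below, take a one-dimensional slice: setting $c_j = (\langle g, W_j\rangle)_+/\|W_j\|^2$ and all other $c_i = 0$ yields an element of $K_0$, so
\begin{equation*}
\|\Pi_{K_0}(g)\|^2 \,\geq\, \max_j \frac{(\langle g, W_j\rangle)_+^2}{\|W_j\|^2} \,=\, \bigl((\max_j Z_j)^+\bigr)^2, \qquad Z_j := \langle g, W_j\rangle/\|W_j\| \sim N(0,1).
\end{equation*}
The Gaussian process $(Z_j)_{j=1}^k$ has intrinsic distance $\sqrt{\E(Z_i-Z_j)^2} = \sqrt{2(d^\star - d_{ij})/(d^\star+1)} \geq \sqrt{2/(d^\star+1)}$ for $i \neq j$, since the LCA of two distinct level-$d^\star$ nodes sits at depth at most $d^\star - 1$. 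Applying Sudakov's minorization at $\epsilon = \tfrac{1}{2}\sqrt{2/(d^\star+1)}$ (so that the covering number is at least $k$) yields
\begin{equation*}
\E \max_j Z_j \,\geq\, c_0 \sqrt{\frac{\log k}{d^\star + 1}} \,\geq\, c_1 \sqrt{\frac{\log(n/h)}{h}},
\end{equation*}
and since this is strictly positive, Jensen's inequality together with $(\cdot)^+ \geq (\cdot)$ gives $\E \|\Pi_{K_0}(g)\|^2 \geq (\E \max_j Z_j)^2 \geq c_1^2 \log(n/h)/h$. Multiplying by $\sigma^2$ and dividing by $n$ produces the stated bound.

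The main obstacle is the Sudakov step: the variables $\{Z_j\}$ are strongly correlated when the paths $P_j$ share a long initial segment, so a priori one might worry about losing a factor of $h$ in the separation. What saves the argument is that the $v_j$ all lie at the \emph{same} depth $d^\star$, forcing their pairwise LCA depth to be at most $d^\star - 1$ and yielding the uniform separation $\delta \asymp 1/\sqrt{h}$ that is just enough to produce the $\sqrt{\log(n/h)/h}$ gap after minorization. The ancillary verification that $K_0 \subseteq \F(T_n)$ via the node-wise leakage identity is routine but indispensable.
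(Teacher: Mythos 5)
Your proof is correct and follows the same skeleton as the paper's: evaluate the risk at the zero flow, identify $\|\hat{\mu}\|$ with the supremum of $\langle Z,\nu\rangle$ over the unit ball of the cone (your subcone-monotonicity step via the Moreau decomposition is an equivalent way of saying this), pigeonhole a level with at least $n/h$ nodes, and use root-to-node path indicators as witnesses. Where you diverge is the final max-of-Gaussians step. The paper uses a single \emph{random} witness: it sets $i^{*}=\argmax_{i\in L}Z_i$ and takes $\hat{\nu}$ to be $h^{-1/2}$ times the indicator of the path to $i^{*}$, so that $\langle Z,\hat{\nu}\rangle$ splits into $h^{-1/2}\max_{i\in L}Z_i$ plus a sum over the ancestors of $i^{*}$ whose expectation vanishes (because $i^{*}$ is a function of the level-$l$ coordinates alone, independent of the rest); everything then reduces to the elementary lower bound for the expected maximum of $|L|$ i.i.d.\ Gaussians (Lemma~\ref{gaumax}). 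You instead retain the whole family of normalized path sums $Z_j$, which are strongly correlated, and invoke Sudakov minorization, exploiting the fact that same-depth endpoints force the canonical distance to be at least $\sqrt{2/(d^{\star}+1)}$. Both routes yield the same $\sqrt{\log(n/h)/h}$ Gaussian width; the paper's conditioning trick avoids Sudakov entirely, while yours is more mechanical and would generalize to witnesses that are not coordinate-wise independent at any level. Two cosmetic points: pigeonhole over the $h$ non-root levels (or note that if the root level wins then $n\leq h+1$ and the claimed bound is vacuous anyway), so that $d^{\star}\geq 1$ and $k\geq 2$ in the Sudakov step; and absorb the discrepancy between $\log(n/(2h))$ and $\log(n/h)$ into the constant, which requires $n/h$ bounded away from $1$.
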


\vskip15pt
\begin{remark}
For trees with bounded depth, the lower bound
is $C \sigma^2 \log(n)/n$. Theorem~\ref{lse_lb_shallow} actually holds for the zero flow $\mu=0$.
In other words, the risk of the LSE at the origin is
$\sigma^2 \log(n)/n$, up to a constant factor. 
This follows from an argument involving the statistical dimension of
the cone of flows for the star graph; see~\cite{LivEdge}. 
\label{rem:edge}
\end{remark}

\begin{remark}
A simple example of a sequence of trees with bounded depth
is the collection of star graphs, with one root and $n - 1$ children. 
With known root value $\mu_1$, the simplex constraint makes the
estimation problem similar to that of estimating a vector lying
in an $\ell_1$ ball of known radius.
\end{remark}



The above theorems quantify how the flow estimation problem
is ``easier'' in the bounded depth regime than in the setting
of a path.  Specifically, the risk of the LSE in the bounded depth tree setting scales according to
the (nearly) parametric rate $O(\log(n)/n)$, while in the path setting it 
scales as $O(n^{-2/3})$. A natural question is then whether 
paths are the ``hardest'' cases for flow estimation among all rooted
trees, that is, whether $O(n^{-2/3})$ is the slowest rate among all
sequences of trees.  Another question is whether or not the 
rate of convergence of the LSE is monotonic with respect to the depth.
These questions motivate our study of a family of ``deep trees,'' as described in the next section.

\subsection{Deep Trees}
\def\nat{{\text{nat}}}

We define a family of trees $\T_{\alpha,n}$ parameterized by 
$\alpha$ satisfying $0 \leq \alpha \leq 1$.  For a given $n$ and $\alpha$,
the root has $m = \lceil n^{\alpha}\rceil$ children, and each of
these $m$ children is the starting point of a path
of length $l = \lceil n^{1 - \alpha}\rceil$. See Figure~2.
When  $\alpha = 0$ the tree $\T_{\alpha,n}$ is
a single path and flow denoising on $\T_{\alpha,n}$
corresponds to the isotonic sequence regression problem. When $\alpha
= 1$ the tree $\T_{\alpha,n}$ is a star graph with $n$ children of the
root. Hence, this family of trees interpolates between the path 
and star graphs. If the risk of the 
LSE is indeed monotonic with the depth of the underlying sequence of
trees, then it should decay faster with $n$ as $\alpha$
increases. 

\begin{figure}[t]
\begin{center}
\vskip-.1in
\tikzset{circ/.style={circle, draw, fill=black, scale=.5}}
\begin{tabular}{cc}
\hskip-15pt
\begin{tikzpicture}
\node (l1) at (0,0) [circ]{};

\node (l2) at (-2,-2) [circ]{};
\node (l3) at (-.5,-2) [circ]{};
\node (l4) at (2,-2) [circ]{};

\node (l5) at (-2,-3.5) [circ]{};
\node (l6) at (-.5,-3.5) [circ]{};
\node (l7) at (2,-3.5) [circ]{};

\node at (0.3,-2) {$\cdots$};

\node at (0.3,-3.5) {$\cdots$};

\node (d5) at (-2,-4.0) {};
\node (d6) at (-.5,-4.0) {};
\node (d7) at (2,-4.0) {};

\draw[-] (l1) to node [auto] {} (l2);
\draw[-] (l1) to node [auto] {} (l3);
\draw[-] (l1) to node [auto] {} (l4);

\draw[-] (l2) to node [auto] {} (l5);
\draw[-] (l3) to node [auto] {} (l6);
\draw[-] (l4) to node [auto] {} (l7);

\draw[-] (l5) to node [auto] {} (d5);
\draw[-] (l6) to node [auto] {} (d6);
\draw[-] (l7) to node [auto] {} (d7);

\node at (-2,-4.3) {$\vdots$};
\node at (-.5,-4.3) {$\vdots$};
\node at (2,-4.3) {$\vdots$};

\node (l8) at (-2,-5.5) [circ]{};
\node (l9) at (-.5,-5.5) [circ]{};
\node (l10) at (2,-5.5) [circ]{};

\node (l11) at (-2,-7.0) [circ]{};
\node (l12) at (-.5,-7.0) [circ]{};
\node (l13) at (2,-7.0) [circ]{};

\node (d8) at (-2,-5.0) {};
\node (d9) at (-.5,-5.0) {};
\node (d10) at (2,-5.0) {};

\draw[-] (l8) to node [auto] {} (d8);
\draw[-] (l9) to node [auto] {} (d9);
\draw[-] (l10) to node [auto] {} (d10);

\node at (0.3,-5.5) {$\cdots$};

\draw[-] (l8) to node [auto] {} (l11);
\draw[-] (l9) to node [auto] {} (l12);
\draw[-] (l10) to node [auto] {} (l13);

\node at (0.3,-7.0) {$\cdots$};

\draw [decorate,decoration={brace,amplitude=10pt,raise=4pt},yshift=0pt]
      (2.1,-2) -- (2.1,-7) node [black,midway,xshift=1.4cm]
      {\footnotesize $l=\lceil n^{1-\alpha}\rceil$};

\draw [decorate,decoration={brace,mirror,amplitude=10pt,raise=0pt},yshift=0pt]
      (-2,-7.3) -- (2.0,-7.3) node [black,midway,xshift=-.0cm,yshift=-18pt]
      {\footnotesize $m = \lceil n^{\alpha}\rceil$};

\end{tikzpicture}
&
\hskip20pt
\begin{tikzpicture}
\node (l1) at (0,0) [circ]{};

\node (l2) at (-2,-2) [circ]{};
\node (l3) at (-.5,-2) [circ]{};
\node (l4) at (2,-2) [circ]{};

\node (l5) at (-2,-3.5) [circ]{};
\node (l6) at (-.5,-3.5) [circ]{};
\node (l7) at (2,-3.5) [circ]{};

\node at (-.5,0) {$\mu_1$};

\node at (-2.5,-2) {$\mu_1^{(1)}$};
\node at (-1.0,-2) {$\mu_1^{(2)}$};
\node at (0.3,-2) {$\cdots$};
\node at (1.5,-2) {$\mu_1^{(m)}$};

\node at (-2.5,-3.5) {$\mu_2^{(1)}$};
\node at (-1.0,-3.5) {$\mu_2^{(2)}$};
\node at (0.3,-3.5) {$\cdots$};
\node at (1.5,-3.5) {$\mu_2^{(m)}$};

\node (d5) at (-2,-4.0) {};
\node (d6) at (-.5,-4.0) {};
\node (d7) at (2,-4.0) {};

\draw[-] (l1) to node [auto] {} (l2);
\draw[-] (l1) to node [auto] {} (l3);
\draw[-] (l1) to node [auto] {} (l4);

\draw[-] (l2) to node [auto] {} (l5);
\draw[-] (l3) to node [auto] {} (l6);
\draw[-] (l4) to node [auto] {} (l7);

\draw[-] (l5) to node [auto] {} (d5);
\draw[-] (l6) to node [auto] {} (d6);
\draw[-] (l7) to node [auto] {} (d7);

\node at (-2,-4.3) {$\vdots$};
\node at (-.5,-4.3) {$\vdots$};
\node at (2,-4.3) {$\vdots$};

\node (l8) at (-2,-5.5) [circ]{};
\node (l9) at (-.5,-5.5) [circ]{};
\node (l10) at (2,-5.5) [circ]{};

\node (l11) at (-2,-7.0) [circ]{};
\node (l12) at (-.5,-7.0) [circ]{};
\node (l13) at (2,-7.0) [circ]{};

\node (d8) at (-2,-5.0) {};
\node (d9) at (-.5,-5.0) {};
\node (d10) at (2,-5.0) {};

\draw[-] (l8) to node [auto] {} (d8);
\draw[-] (l9) to node [auto] {} (d9);
\draw[-] (l10) to node [auto] {} (d10);

\node at (-2.5,-5.5) {$\mu_{l-1}^{(1)}$};
\node at (-1.0,-5.5) {$\mu_{l-1}^{(2)}$};
\node at (0.3,-5.5) {$\cdots$};
\node at (1.5,-5.5) {$\mu_{l-1}^{(m)}$};

\draw[-] (l8) to node [auto] {} (l11);
\draw[-] (l9) to node [auto] {} (l12);
\draw[-] (l10) to node [auto] {} (l13);

\node at (-2.5,-7.0) {$\mu_{l}^{(1)}$};
\node at (-1.0,-7.0) {$\mu_{l}^{(2)}$};
\node at (0.3,-7.0) {$\cdots$};
\node at (1.5,-7.0) {$\mu_{l}^{(m)}$};
\node at (0, -8.0) {};
\end{tikzpicture}
\end{tabular}
\end{center}
\caption{Left: A family of trees ${\T}_{\alpha, n}$, consisting of
  root with $m=\lceil n^\alpha\rceil $
children, and $m$ paths of length $l=\lceil n^{1-\alpha}\rceil$. Right: A
parameterization of flows in the family $\mathcal{F}_{\alpha, n}
= \mathcal{F}({\T}_{\alpha, n})$. }
\label{fig:kite}
\end{figure}
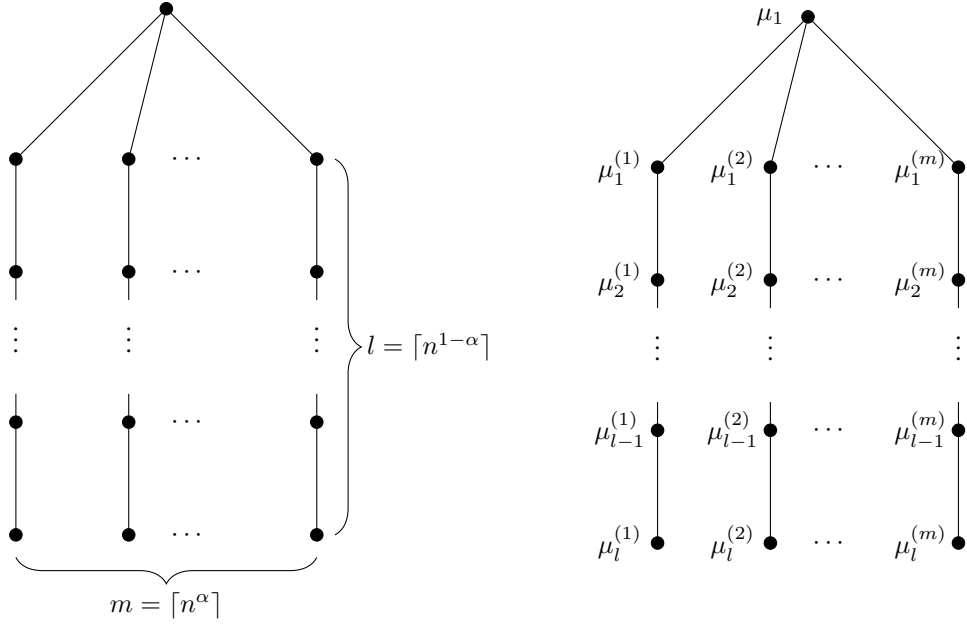


\subsubsection{Upper Bounds for the LSE}

When $\alpha$ is closer to zero than to one, a natural
estimator is to set $\hat{\mu}_1 = Y_1$ at the root and perform
$n^{\alpha}$ separate isotonic regressions on each of the
paths. Since this estimator need not satisfy the flow constraint at
the root, we then project the resulting estimator onto the space
$\F(\T_{n,\alpha})$ to obtain the estimator $\hat{\mu}_\nat$.

Using the results of \cite{Zhang02} for isotonic regression,
together with an application  of H\"older's inequality (see Section~\ref{sec:proofs}),
we can show that the risk of the natural estimator
satisfies 
\begin{equation}
\sup_{\mu \in \F(\T_{\alpha,n}): \mu_1 \leq V} \E \frac{1}{n}
\|\hat{\mu}_\nat - \mu\|^2 \leq \Bigl(\frac{\sigma^2 V}{n}\Bigr)^{2/3} +
\frac{\sigma^2 \log en}{n^{1 - \alpha}}.
\label{natest}
\end{equation}

For small $\alpha$, it is natural to expect an ``isotonic effect''
where the performance of the actual LSE $\hat{\mu}$ should be close to
the performance of the 
natural estimator estimator $\hat{\mu}_\nat$. This intuition is made
rigorous by our first upper bound for the LSE over $\F(\T_{\alpha, n})$.

\begin{theorem}
\label{isotonicupbd}
The risk of the LSE satisfies 
\begin{equation*}
\sup_{\mu \in \F(\T_{\alpha,n}): \mu_1 \leq V} \E \frac{1}{n} \|\hat{\mu} - \mu\|^2 \leq 2\Bigl(\frac{\sigma^2 V \log(en)}{n}\Bigr)^{2/3} + \frac{\sigma^2 \log(en)}{n^{1 - \alpha}}.
\end{equation*}
\end{theorem}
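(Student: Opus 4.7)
The plan is to invoke the variational characterization of the risk of the LSE over a closed convex cone due to \citet{chatterjee2015risk}: $\E \|\hat\mu - \mu\|^2 \leq C t_\ast^2$, where $t_\ast$ is the unique positive maximizer of $G_\mu(t) - t^2/2$ and
$$G_\mu(t) \;=\; \E \sup\bigl\{\langle \epsilon, \nu - \mu\rangle : \nu \in \F(\T_{\alpha,n}),\; \|\nu - \mu\| \leq t\bigr\}$$
is the local Gaussian width of $\F(\T_{\alpha,n})$ at $\mu$. The task reduces to producing a sharp upper bound on $G_\mu(t)$ and solving the fixed-point inequality $G_\mu(t_\ast) \lesssim t_\ast^2$.

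For the entropy estimate, I would use the parameterization on the right of Figure~\ref{fig:kite}: any $\nu \in \F(\T_{\alpha,n})$ is determined by the root value $\nu_1$ together with $m$ monotone non-increasing non-negative sequences $\nu^{(1)}, \ldots, \nu^{(m)}$ of length $l$, subject to the coupling $\sum_{j=1}^m \nu^{(j)}_1 \leq \nu_1$. For $\nu$ with $\|\nu-\mu\| \leq t$ and $\mu_1 \leq V$, the coupling yields $\sum_j \nu^{(j)}_1 \leq V + t$. Combining the Birman--Solomyak bound $\log N(\eta, \M_l([0, V_j^\ast]), \|\cdot\|) \lesssim V_j^\ast \sqrt{l}/\eta$ for monotone sequences, product covering across the $m$ paths, and a discretization of the allowed splits $(V_1^\ast, \ldots, V_m^\ast)$ of the total budget $V+t$, one obtains
$$\log N\bigl(\eta,\; \F(\T_{\alpha,n}) \cap \{\nu : \|\nu - \mu\| \leq t\},\; \|\cdot\|\bigr) \;\lesssim\; \frac{(V+t)\sqrt{n}}{\eta}\,\log(en) + m \log(en).$$
Dudley's chaining inequality then gives $G_\mu(t) \lesssim \sigma\sqrt{(V+t)\sqrt{n}\, t\, \log(en)} + \sigma t\sqrt{m \log(en)}$.

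Solving $G_\mu(t_\ast) \lesssim t_\ast^2$ in the regime $V \gtrsim t_\ast$ (the complementary small-$V$ case is handled directly) produces two contributions: $t_\ast^2 \lesssim \sigma^{4/3} V^{2/3} n^{1/3} \log(en)$ from the first summand and $t_\ast^2 \lesssim \sigma^2 m \log(en)$ from the second. Dividing by $n$ and using $m = \lceil n^\alpha\rceil$ and $l = \lceil n^{1-\alpha}\rceil$ produces the two terms $(\sigma^2 V \log(en)/n)^{2/3}$ and $\sigma^2 \log(en)/n^{1-\alpha}$ claimed in the theorem. The principal obstacle is extracting the entropy bound that respects the coupling $\sum_j \nu^{(j)}_1 \leq V + t$ rather than the weaker per-path bound $\nu^{(j)}_1 \leq V + t$: the latter would produce an extra $\sqrt{m}$ in the entropy integral and, through the fixed-point calculation, an $\alpha$-dependent main term $(\sigma^2 V/n)^{2/3} n^{\alpha/3}$ inconsistent with the $\alpha$-free first term of the theorem. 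The correct accounting combines the product-entropy inequality with the additivity of $\sum_j V_j^\ast$ under a covering of the simplex of splits, and the $\log(en)$ factor arises from truncating Dudley's integral at the scale implied by this discretization.
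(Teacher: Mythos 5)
Your proposal takes a genuinely different route from the paper's. The paper proves Theorem~\ref{isotonicupbd} via Bellec's oracle inequality (Lemma~\ref{bellec}): for any $u\in C$, $\E\|\hat\mu-\mu\|^2 \le \|u-\mu\|^2 + \sigma^2\delta(\T_{C,u})$, where $\delta(\T_{C,u})$ is the statistical dimension of the tangent cone at $u$. One then approximates each path $\mu^{(i)}$ by a piecewise-constant nonincreasing sequence $\nu^{(i)}$ with roughly $\mu_1^{(i)}/\epsilon$ pieces and approximation error $l\epsilon^2$ (Lemma~\ref{approx}), and controls $\delta(\T_{\F_{n,\alpha},\nu})$ by noting that the tangent cone at a piecewise-constant flow embeds in a product of monotone cones over the constant blocks, so the bound $\delta(\M_d)\le\log(ed)$ applies block by block (Lemma~\ref{tanconecharac}). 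Optimizing over $\epsilon$ gives the theorem. Your proposal instead uses Chatterjee's functional (Theorem~\ref{chatthm}) together with Dudley's entropy integral, which is precisely the strategy the paper reserves for Theorem~\ref{simplexupbd}.

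The trade-off is instructive. The paper's route avoids covering-number computations entirely: the monotone-cone statistical dimension $\log(ed)$ does all the work, and the coupling $\sum_j \nu_1^{(j)}\le\nu_1$ is never invoked in bounding $\delta$; it enters only to guarantee $\sum_i k^{(i)} \lesssim \mu_1/\epsilon + m$ via $\sum_i \mu_1^{(i)}\le\mu_1$. Your route must fight through a more delicate entropy bound. Your fixed-point arithmetic is correct, and you correctly identify the crux: a per-path Birman--Solomyak bound with the weak constraint $\nu_1^{(j)}\le V+t$ loses a factor $\sqrt{m}$. Your remedy, optimizing the radius allocation across paths and combining with a discretization of the simplex of split values $(V_1^*,\ldots,V_m^*)$, is essentially a Maurey/$\ell_1$-constrained covering argument and does produce the right $(V+t)\sqrt{n}/\eta$ scaling (either by Lagrange optimization of the allocation, or by H\"older as you suggest, since $\sum (V_j^*)^{2/3}\le m^{1/3}(\sum V_j^*)^{2/3}$). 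Both routes give the theorem, but the paper's is shorter and sidesteps two technical loose ends in yours: the simplex discretization needs to be carried out carefully to justify the additive $m\log(en)$ term, and the small-$V$ regime $V\lesssim t_*$ and the lower truncation of Dudley's integral need to be handled explicitly, not just asserted. Neither is a fatal gap, but the paper's approximation argument buys a cleaner proof by exchanging the entropy calculation for a one-line bias--variance decomposition plus a known statistical-dimension formula.
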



\vskip5pt The proof of Theorem~\ref{isotonicupbd} exploits
monotonicity of the flow along the paths, but does not use the simplex
constraints at every level. The proof technique is based on a general
theory of analyzing least squares estimators which uses an appropriate
notion of size of the tangent cone at flows that are piecewise
constant along every path. This point will be made clear in
Section~\ref{sec:proofs}, where Theorem~\ref{isotonicupbd} is proved.

As $\alpha$ increases, one expects that imposing the
simplex constraint at the root becomes more important.
Since Theorem~\ref{isotonicupbd} is proved only using 
monotonicity along each path, a separate risk bound is needed in the
large $\alpha$ regime. 

This becomes clear if one thinks about the zero estimator $\hat\mu = 0$.
The sum of the squared errors of this estimator is
$V^2$ at every level of the tree; since the
tree has depth $n^{1 - \alpha}$, the sum of squared errors of the zero
estimator is at most $n^{1 - \alpha} V^2$. 
The resulting bound can be better than the upper bound given in
Theorem~\ref{isotonicupbd}. 

As a type of oracle estimator that imposes the flow constraints, 
suppose we know the value of $\mu_1 = V$.  Then we could perform
``simplex regression'' on each level. Specifically, using the notation of
Figure~2, we first compute, for each level $\ell$, the estimator 
\begin{equation}
\tilde \mu_{\ell} = \argmin_{\sum_{j=1}^{m} \mu^{(j)}_\ell \leq
  V} \;\sum_{j=1}^{m} \left(Y^{(j)}_{\ell} - \mu^{(j)}_\ell\right)^2.
\end{equation}
We then project $\tilde\mu$ onto the set of flows.
Let us call this estimator $\hat{\mu}_{oracle}$, since it assumes
knowledge of the root flow $V$. We can then establish the risk bound
\begin{equation}
\E \frac{1}{n} \|\hat{\mu}_{oracle} - \mu\|^2 \leq C\left(V \sigma \frac{\sqrt{\log n}}{n^{\alpha}} + \sigma^2 \frac{\log n}{n^{\alpha}}\right)
\end{equation}
using Theorem~\ref{lse_upbd_shallow}.
For large $\alpha$, one expects that the performance of the LSE
will be similar to this oracle estimator.  Our next result establishes
that this is indeed the case.

\vskip10pt
\begin{theorem}\label{simplexupbd}
The risk of the LSE satisfies 
\begin{equation*}
\E \frac{1}{n} \|\hat{\mu} - \mu\|^2 \leq C \big(\frac{\sigma^2 (\log(en))^3} {n^{\alpha}} + V \frac{\sigma (\log(en))^{3/2}}{n^{\alpha}}\big)
\end{equation*}
for a universal constant $C$.
\end{theorem}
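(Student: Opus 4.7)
The plan is to apply the standard variational analysis of the least squares estimator, reducing the risk bound to a Gaussian width calculation on the shifted flow cone, and then to exploit the layered structure of $\T_{\alpha,n}$ level by level. Starting from the basic inequality $\|\hat{\mu}-\mu\|^2\leq 2\langle\epsilon,\hat{\mu}-\mu\rangle$, the risk is controlled by the fixed point of $r\mapsto G(r)-r^2/2$, where
$$G(r)=\E\sup_{\nu\in\F(\T_{\alpha,n}),\;\|\nu-\mu\|\leq r}\langle\epsilon,\nu-\mu\rangle.$$
Bounding $G(r)$ is the main technical task.

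The key structural fact I would exploit is that any flow $\nu\in\F(\T_{\alpha,n})$ with root value $\nu_1\leq V'$ automatically satisfies, at every level $\ell\in\{1,\ldots,l\}$, the simplex constraint $\sum_{j=1}^m \nu_\ell^{(j)}\leq V'$; this follows by iterating monotonicity along each path together with the simplex constraint at the root. Thus each of the $l=\lceil n^{1-\alpha}\rceil$ levels is effectively a star graph on $m=\lceil n^{\alpha}\rceil$ vertices with total mass at most $V'$. I would split the inner product as $\langle\epsilon,\nu-\mu\rangle=\epsilon_{\text{root}}(\nu_1-\mu_1)+\sum_{\ell=1}^{l}\langle\epsilon_\ell,\nu_\ell-\mu_\ell\rangle$, and at each level invoke the simplex/star Gaussian width estimate underlying the proof of Theorem~\ref{lse_upbd_shallow} with $h=1$: the supremum over the level-$\ell$ simplex of radius $V'$ is of order $V'\sigma\sqrt{\log m}+\sigma^2\log m$, plus a Cauchy--Schwarz contribution proportional to $r$ times an effective noise scale. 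Summing over the $l$ levels and solving $G(r)\asymp r^2/2$ then yields a risk bound of order $V'\sigma\sqrt{\log n}/n^{\alpha}+\sigma^2\log n/n^{\alpha}$, matching the oracle bound discussed just before the theorem.

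The principal obstacle is that the LSE does not have access to $V$: the root estimate $\hat{\mu}_1$ is random and can exceed any fixed $V'$, so the effective per-level simplex radius is itself random. I would handle this by a peeling argument over dyadic scales $V'\in\{2^{k}(V+\sigma):k\geq 0\}$ of the root value, combining the deterministic Gaussian width bound at each scale with a Gaussian tail estimate showing that $\hat{\mu}_1\gg V+\sigma\sqrt{\log n}$ is exponentially unlikely. The union bound over the $O(\log n)$ relevant dyadic scales, together with the level-by-level aggregation, is the source of the additional $(\log n)^{2}$ factor on the $\sigma^2/n^{\alpha}$ term and the additional $\log n$ factor on the $V\sigma/n^{\alpha}$ term that distinguish Theorem~\ref{simplexupbd} from the oracle bound. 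Verifying that the per-level Gaussian width estimate is uniform enough to plug into the peeling loop, rather than demanding a separate chaining argument at each scale, is the delicate step; once this is in place, combining it with the fixed-point analysis of the LSE should produce the stated risk essentially as claimed.
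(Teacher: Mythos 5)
Your overall framework---the fixed-point characterization of the LSE risk via the localized Gaussian supremum $G(r)-r^2/2$---is the same one the paper uses (Chatterjee's theorem). The gap is in how you bound $G(r)$. Your level-by-level decomposition is exactly the argument the paper uses for \emph{shallow} trees in Theorem~\ref{lse_upbd_shallow}, and it cannot produce the bound claimed here. Concretely: inside the ball $\|\nu-\mu\|\leq r$ the root value satisfies $\nu_1\leq \mu_1+r$, so each level is a simplex of mass at most $V+r$ and the per-level expected supremum is of order $(2V+r)\sigma\sqrt{\log m}$. Summing over the $l=\lceil n^{1-\alpha}\rceil$ levels gives $G(r)\lesssim l\,(2V+r)\,\sigma\sqrt{\log m}$, whose coefficient of $r$ is $l\sigma\sqrt{\log m}$; the global fixed point of $G(r)=r^2/2$ is then $r^{*2}\asymp l^2\sigma^2\log m+lV\sigma\sqrt{\log m}$, i.e.\ a variance term of order $\sigma^2 n^{2(1-\alpha)}\log n$, which after dividing by $n$ is $\sigma^2 n^{1-2\alpha}\log n$---polynomially worse than the claimed $\sigma^2(\log n)^3/n^{\alpha}$ for every $\alpha<1$. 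Your step ``summing over the $l$ levels and solving $G(r)\asymp r^2/2$ yields $\ldots+\sigma^2\log n/n^{\alpha}$'' implicitly solves a separate fixed point at each level and averages the results; that is legitimate for the per-level oracle estimator described before the theorem, but not for the LSE over the full flow cone, where a single global fixed point must be solved and the $r$-linear contributions from all $l$ levels accumulate.

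The paper escapes this by abandoning the level decomposition and bounding the metric entropy of the whole set $\{\nu\in\F:\nu_1\leq\mu_1+t\}$ at once (Lemma~\ref{lem:cover}): a Maurey-type randomization over the \emph{leaks}, whose total over the entire tree is at most the root flow, gives $\log N(\eps)\lesssim (V^2h/\eps^2)\log n$, and Dudley's entropy integral then produces a Gaussian width whose coefficient of $t$ scales like $\sqrt{h}\,(\log n)^{3/2}$ rather than $h\sqrt{\log n}$; that $\sqrt{h}$ is precisely what turns $h^2$ into $h$ in the variance term. The global constraint ``total leak $\leq V$'' is the information your per-level argument discards. Separately, your dyadic peeling over the unknown root value addresses a non-issue---the deterministic containment $\{\|\nu-\mu\|\leq t\}\subset\{\nu_1\leq\mu_1+t\}$ already handles it inside the fixed-point argument---and your attribution of the extra logarithms to a union bound over $O(\log n)$ scales does not match their actual source, namely the entropy integral $\int\eps^{-1}\sqrt{\log(1+n\eps^2/\cdots)}\,d\eps\asymp(\log n)^{3/2}$.
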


\vskip10pt
Theorem~\ref{simplexupbd} is proved using techniques that control the maxima of a
suitable empirical process, requiring the estimation of covering numbers
for the space of flows. The full proof is given in Section~\ref{sec:simplexupbd}.
We remark that the extra log factor in the bound in
Theorem~\ref{simplexupbd} appears
to be an artifact of our current proof technique,
and may not be necessary.

Theorems~\ref{isotonicupbd} and \ref{simplexupbd} hold for any $0 \leq
\alpha \leq 1$, but are useful in different regimes.  The following
corollary combines them to establish a single risk bound. This is done
by treating $V$ and $\sigma$ as fixed, and considering the
risk as a function of $n$ only. 
The scaling of $n$ is better in Theorem~\ref{isotonicupbd} for $0 \leq
\alpha \leq 1/3$ and is better in Theorem~\ref{simplexupbd} when $1/3
< \alpha \leq 1$. 
\vskip10pt

\begin{corollary}\label{cor1}
The worst case risk of the least squares estimator 
satisfies
\begin{equation*}
\sup_{\mu \in \F(\T_{n,\alpha}): \mu_{1} \leq V} \E \frac{1}{n}
\|\hat{\mu} - \mu\|^2 \leq 
\begin{cases}
\displaystyle\;    2\big(\frac{\sigma^2 V \log(en)}{n}\big)^{2/3} + \frac{\sigma^2 \log(en)}{n^{1 - \alpha}}           & 0 < \alpha \leq \frac{1}{3}\\[10pt]
\displaystyle\;                 \frac{C}{n} \big(\sigma^2 (\log(en))^3 n^{1 - \alpha} + V \sigma (\log(en))^{3/2}\big)              & \frac{1}{3} \leq \alpha < 1.
\end{cases}
\end{equation*}
\end{corollary}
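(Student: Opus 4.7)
The plan is essentially bookkeeping: Theorems~\ref{isotonicupbd} and \ref{simplexupbd} each give a valid upper bound for every $\alpha \in [0,1]$, so the risk of the LSE is bounded by the \emph{minimum} of the two bounds. The corollary is obtained by reporting whichever of the two bounds has the better scaling in $n$ for fixed $V$ and $\sigma$, and identifying the crossover value of $\alpha$ where the two rates match.

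First I would isolate the dominant $n$-dependence of each bound, treating $V$ and $\sigma$ as constants and ignoring logarithmic factors. Theorem~\ref{isotonicupbd} gives a rate of order $n^{-2/3} + n^{\alpha - 1}$, whose first term dominates exactly when $\alpha \leq 1/3$; Theorem~\ref{simplexupbd} gives a rate of order $n^{-\alpha}$. Comparing these, one sees that for $\alpha \leq 1/3$ we have $n^{-2/3} \leq n^{-\alpha}$, so Theorem~\ref{isotonicupbd} is the tighter statement, whereas for $\alpha \geq 1/3$ the bound $n^{-\alpha}$ from Theorem~\ref{simplexupbd} takes over (once the lower-order terms from Theorem~\ref{isotonicupbd} stop helping). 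The two rates coincide at $\alpha = 1/3$, which is exactly the crossover value appearing in the statement.

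Having identified the crossover, the two-line display in the corollary is then just a restatement: on $(0, 1/3]$ I would quote Theorem~\ref{isotonicupbd} verbatim, since both terms of that bound already appear in the stated form. On $[1/3, 1)$ I would quote Theorem~\ref{simplexupbd} and rewrite the factor $1/n^{\alpha}$ as $n^{1-\alpha}/n$ so that the first term reads $C\sigma^{2}(\log en)^{3} n^{1-\alpha}/n$; the $V$-dependent term of Theorem~\ref{simplexupbd} is handled similarly (using $n^{1-\alpha} \leq n$ where needed so the final expression has the common prefactor $C/n$ claimed in the corollary).

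There is no real obstacle in this proof: the genuine work has been done in establishing Theorems~\ref{isotonicupbd} and \ref{simplexupbd}, and the corollary follows from taking the pointwise minimum of the two bounds and verifying the elementary comparison $\min\{n^{-2/3}, n^{-\alpha}\}$ to locate the crossover. The only point worth flagging is that the ``optimal'' crossover depends on how $V$ is allowed to scale with $n$; if $V$ is held fixed the value $\alpha = 1/3$ is correct, as in the statement, and this is what I would adopt.
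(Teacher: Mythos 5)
Your approach is exactly the paper's: the corollary is stated there with no separate proof beyond the remark that Theorems~\ref{isotonicupbd} and \ref{simplexupbd} both hold for all $\alpha$ and that, with $V$ and $\sigma$ fixed, the former scales better for $\alpha\le 1/3$ and the latter for $\alpha\ge 1/3$, which is precisely your minimum-of-two-bounds argument with the crossover located by comparing $n^{-2/3}$ with $n^{-\alpha}$.

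One bookkeeping step in your write-up does not work as stated, and it is worth flagging because the issue is really in the corollary itself. The $V$-dependent term of Theorem~\ref{simplexupbd} is $V\sigma(\log en)^{3/2}/n^{\alpha} = \frac{1}{n}\,V\sigma(\log en)^{3/2}\,n^{1-\alpha}$, whereas the corollary's second case prints $\frac{C}{n}\,V\sigma(\log en)^{3/2}$ with the factor $n^{1-\alpha}$ deleted. Your proposed fix, ``using $n^{1-\alpha}\le n$,'' goes in the wrong direction: it enlarges the bound rather than removing the extra factor, so it cannot produce the smaller expression claimed. The printed second term is therefore not a consequence of Theorem~\ref{simplexupbd} termwise; it becomes harmless only under the convention the paper adopts (treating $V$ and $\sigma$ as fixed), since then $V\sigma(\log en)^{3/2}/n^{\alpha}\le C_{V/\sigma}\,\sigma^2(\log en)^{3}/n^{\alpha}$ and the $V$-term is absorbed into the first term, which already appears correctly as $\sigma^2(\log en)^3 n^{1-\alpha}/n$. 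A clean statement would either keep $n^{-\alpha}$ (or the $n^{-(1+\alpha)/2}$ that the proof of Theorem~\ref{simplexupbd} actually yields) in the $V$-term, or note explicitly that the $V$-term is dominated for fixed $V,\sigma$.
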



Note that for fixed $V$ and $\sigma$, the upper bound of Corollary~\ref{cor1} is
not monotonically decreasing in $\alpha$.
To resolve whether this is intrinsic to the flow estimation problem,
lower bounds are required, as discussed in the next section.

\subsubsection{Minimax Lower Bounds}

Lower bounds for flow estimation in the family of trees
$\T_{n,\alpha}$ should match the lower bound for isotonic regression
when $\alpha = 0$.  We therefore first establish a minimax lower bound
for isotonic regression; although existing lower bounds are comparable
\citep{chatterjee2015risk}, we are not aware of this exact form of the
lower bound appearing in the literature.

\begin{theorem}\label{isominimax}
The minimax lower bound
\begin{equation*}
\inf_{\tilde{\mu}} \sup_{\mu \in \M_{V}} \E \frac{1}{n} \|\tilde{\mu} - \mu\|^2 \geq \frac{1}{192} \min\Bigl\{\sigma^2,\,V^2,\,\Big(\frac{\sigma^2 V}{n}\Big)^{2/3}\Bigr\}
\end{equation*}
holds for the parameter space $$\M_{V} = \{\mu \in \R^n: V \geq \mu_1 \geq
\dots \geq \mu_n \geq 0\}.$$ 
\end{theorem}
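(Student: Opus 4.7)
The plan is to apply Assouad's lemma to a staircase packing of $\M_V$, with the three terms in the minimum arising from three choices of the number of steps. Fix an integer $k \in \{1, \ldots, n\}$ and a step height $\delta > 0$ to be chosen later, and partition $\{1, \ldots, n\}$ into $k$ consecutive blocks $B_1, \ldots, B_k$ of size $m = \lfloor n/k \rfloor$. For each $\omega = (\omega_1, \ldots, \omega_k) \in \{0, 1\}^k$ define
\begin{equation*}
\mu^{(\omega)}_i \;=\; V\Bigl(1 - \frac{j-1}{k}\Bigr) \;-\; \omega_j\,\delta, \qquad i \in B_j.
\end{equation*}
As long as $\delta \leq V/k$, the gap $V/k$ between base heights of consecutive blocks dominates any bit-induced perturbation, so $\mu^{(\omega)}$ is coordinatewise nonincreasing and lies in $[0, V]^n$; thus $\mu^{(\omega)} \in \M_V$.

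Next I would apply Assouad's lemma on the hypercube $\{0, 1\}^k$. The two key inputs are the squared $L^2$ separation $\|\mu^{(\omega)} - \mu^{(\omega')}\|^2 = m\delta^2\,d_H(\omega, \omega')$ in terms of Hamming distance, and the KL divergence $m\delta^2/(2\sigma^2)$ between neighbors, which follows because two neighbors differ on only one block of size $m$. Choosing $\delta$ so that this neighbor KL is bounded by a small constant (concretely $m\delta^2 \leq \sigma^2$) forces the total variation between neighboring Gaussian product measures to be at most $1/2$ via Pinsker's inequality, and the standard form of Assouad's lemma then yields
\begin{equation*}
\inf_{\tilde\mu}\,\sup_{\omega \in \{0,1\}^k} \E\,\tfrac{1}{n}\|\tilde\mu - \mu^{(\omega)}\|^2 \;\geq\; c\,\delta^2
\end{equation*}
for a universal constant $c > 0$; tracking the numerical constants through Pinsker and Assouad is what produces the explicit factor $1/192$.

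Finally I would choose $(k, \delta)$ to maximize $\delta^2$ subject to the two binding constraints $\delta \leq V/k$ (monotonicity) and $\delta^2 \leq \sigma^2 k/n$ (KL control). These meet at $k \asymp (V^2 n/\sigma^2)^{1/3}$, giving $\delta^2 \asymp (V\sigma^2/n)^{2/3}$; this choice of $k$ is admissible (i.e., lies in $[1, n]$) exactly when $\sigma/\sqrt{n} \leq V \leq n\sigma$. When $V < \sigma/\sqrt{n}$, I would instead take $k = 1$ and $\delta = V$ --- equivalently, a Le Cam two-point comparison of $0$ against $V\mathbf{1}$ --- to obtain $\delta^2 = V^2$. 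When $V > n\sigma$, I would take $k = n$ (blocks of size one) and $\delta = \sigma$, which yields $\delta^2 = \sigma^2$. Taking the minimum across the three cases recovers the claimed $\min\{\sigma^2,\,V^2,\,(\sigma^2 V/n)^{2/3}\}$ bound.

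The main obstacle is not conceptual but bookkeeping: I need to check that the hypercube construction and the Assouad constants remain valid in the endpoint cases $k = 1$ and $k = n$ (where the packing degenerates to two hypotheses or to per-coordinate perturbations), and that the Pinsker slack $(1 - \text{TV})$ can be absorbed into a single universal prefactor across all three regimes so that the resulting constant in the lower bound is as claimed.
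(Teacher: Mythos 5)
Your proposal is essentially the paper's own proof: it applies Assouad's lemma to a staircase packing of $\M_V$, bounds the neighboring Kullback–Leibler divergence via Pinsker, and optimizes the number of blocks $k$ (equivalently, the block size $m$) to produce the three regimes, taking $k \asymp (V^2 n/\sigma^2)^{1/3}$ in the interior case and $k=1$, $k \asymp n$ at the endpoints. The only cosmetic difference is that you perturb each block by a constant shift $\pm\delta$ whereas the paper alternates between the two endpoints of a linear ramp within the block; your version is marginally cleaner (the per-bit separation is exactly $m\delta^2$ and there is no degeneracy at $m=1$), but the argument and constants are the same up to routine bookkeeping.
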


This bound is a minimum of three terms. If $V$ is
not too small or large, the rate $\big(\sigma^2{V}/{n}\big)^{2/3}$ is standard and is attained by the
LSE. If $V$ is very small, the 
$V^2$ term is the smallest, and this rate is achieved by the trivial estimator $\hat\mu=0$.
In this regime, the LSE is not minimax rate optimal since 
it suffers a risk of at least $\sigma^2 {\log (en)}/{n}$ at the
origin; see Remark~\ref{rem:edge}. In case $V$ is very large, the
rate becomes $\sigma^2$, which is achieved by the trivial estimator
$\hat\mu = Y$.

For $\alpha$ small, the hardness of flow estimation on $\T_{n,\alpha}$ 
is dominated by the monotonicity constraints in each path rather than
by the simplex constraints. This leads us to derive a minimax lower bound using Theorem~\ref{isominimax}.

\begin{theorem}\label{minimaxisomain}
Let $\F_{n,\alpha,V}$ denote the set of flows on tree $\T_{n,\alpha}$
with root value at most $V$, and set $m = \lceil n^{\alpha}\rceil$. Then for any $\alpha$,
\begin{equation*}
\inf_{\tilde{\mu}} \sup_{\mu \in \F_{n,\alpha,V}} \E \frac{1}{n} \|\tilde{\mu} - \mu\|^2
\geq  \sup_{(v_1,\dots,v_{m}):\, \sum_{i = 1}^{m} v_i \leq V} \:\: \frac{c}{n} \sum_{i = 1}^{m} \min\Bigl\{n^{1 - \alpha} \sigma^2, \,(v_i \sigma^2)^{2/3} n^{(1 - \alpha)/3}, \,n^{1 - \alpha} v_i^2\Bigr\}. 
\end{equation*}
\end{theorem}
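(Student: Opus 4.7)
The plan is to embed $m$ independent isotonic regression problems inside the flow estimation problem on $\T_{n,\alpha}$ by restricting the parameter space to a product sub-family, and then invoke Theorem~\ref{isominimax} on each path. Fix any $(v_1,\dots,v_m)$ with $\sum_{i=1}^m v_i \leq V$, and let $\Pi \subset \F_{n,\alpha,V}$ be the subfamily of flows with root value fixed at $\mu_1 = \sum_i v_i$ and with the flow along the $i$-th path an arbitrary element of
\begin{equation*}
\M_{v_i} = \{\nu \in \R^l : v_i \geq \nu_1 \geq \cdots \geq \nu_l \geq 0\},
\end{equation*}
where $l = \lceil n^{1-\alpha}\rceil$. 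The root constraint $\mu_1 \geq \sum_i \mu_1^{(i)}$ holds by construction, so $\Pi \subset \F_{n,\alpha,V}$; monotonicity along each path is built into $\M_{v_i}$. Crucially, the parameter space factors as a product and, since the noise is independent across nodes, so do the observations on distinct paths.

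Next I would invoke a standard product-prior Bayes argument. For any joint estimator $\tilde\mu$ and any product prior $\pi = \bigotimes_i \pi_i$ with $\pi_i$ supported on $\M_{v_i}$, the posterior for the $i$-th path depends only on the $l$ observations along that path, since priors and likelihoods both factorize. Consequently, the Bayes risk decomposes and
\begin{equation*}
\inf_{\tilde\mu} \sup_{\mu \in \Pi} \E \|\tilde\mu - \mu\|^2 \;\geq\; \sum_{i=1}^m \inf_{\tilde\mu^{(i)}} \sup_{\nu \in \M_{v_i}} \E \|\tilde\mu^{(i)} - \nu\|^2,
\end{equation*}
where $\tilde\mu^{(i)}$ now ranges over estimators based on the path-$i$ data alone. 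The contribution of the root node to the loss is zero since $\mu_1$ is known to be $\sum_i v_i$ under every element of $\Pi$, so it can be ignored.

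Then I would apply Theorem~\ref{isominimax} to each term, with sample size $l$ and upper bound $v_i$, and multiply by $l$ to convert from averaged to cumulative squared error:
\begin{equation*}
\inf_{\tilde\mu^{(i)}} \sup_{\nu \in \M_{v_i}} \E \|\tilde\mu^{(i)} - \nu\|^2 \;\geq\; \frac{l}{192}\, \min\Bigl\{\sigma^2,\, v_i^2,\, (\sigma^2 v_i / l)^{2/3}\Bigr\}.
\end{equation*}
Distributing the factor of $l$ and using $l \asymp n^{1-\alpha}$ yields
\begin{equation*}
l \cdot \min\{\sigma^2, v_i^2, (\sigma^2 v_i/l)^{2/3}\} \;\asymp\; \min\{n^{1-\alpha}\sigma^2,\, n^{1-\alpha} v_i^2,\, n^{(1-\alpha)/3}(\sigma^2 v_i)^{2/3}\},
\end{equation*}
which is precisely the summand in the statement. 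Dividing by $n$, summing over $i$, and taking the supremum over valid $(v_1,\ldots,v_m)$ completes the proof.

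The main technical point is the product-prior decomposition of the Bayes risk in the second step, which justifies treating the $m$ path subproblems as fully independent isotonic regression problems. The only additional bookkeeping is verifying that the per-path problem, with parameter space $\M_{v_i}$ and data in $\R^l$, matches the form of $\M_V$ in Theorem~\ref{isominimax} (with $(n, V)$ there replaced by $(l, v_i)$ here); this is immediate from the definitions.
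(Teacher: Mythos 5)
Your proposal follows essentially the same route as the paper: restrict to a product sub-family of flows indexed by $(v_1,\dots,v_m)$, observe that the problem decomposes into $m$ independent isotonic regression problems, and apply Theorem~\ref{isominimax} (with $n$ and $V$ there replaced by $l$ and $v_i$) to each path, then take the supremum over valid $(v_1,\dots,v_m)$. The paper simply asserts that the inf--sup decomposes into a sum over the paths; your product-prior Bayes-risk justification spells out why the decomposition holds, and your bookkeeping of the per-path parameter space $\M_{v_i}$ and the conversion from averaged to cumulative squared error is consistent with what the paper intends. The one cosmetic difference is that you fix the root flow at $\sum_i v_i$ while the paper fixes it at $V$; both are valid sub-families and either choice works.
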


This result is proved by considering the subset of flows
where the $m$ the children of the root have flows $(v_1,\dots,v_{m})$,
together satisfying $\sum_{i = 1}^{m} v_i \leq V$. Estimation in this
parameter space is then equivalent to $m$ separate isotonic regressions and
the minimax lower bound is obtained by using
Theorem~\ref{isominimax}. We obtain, as a corollary, a minimax lower
bound in the regime where $\alpha$ is small and $V$ is neither too small nor
too large for the trivial estimators to dominate.

\begin{corollary}\label{isominimaxcoro}
Let $0 \leq \alpha \leq 1/3$, and suppose that $n^{3 \alpha - 1} \leq V \leq n$. Then 
\begin{equation*}
\inf_{\tilde{\mu}} \sup_{\mu \in \F_{n,\alpha,V}} \E \frac{1}{n} \|\tilde{\mu} - \mu\|^2 \geq c \Bigl(\frac{V \sigma^2}{n}\Bigr)^{2/3}. 
\end{equation*}
\end{corollary}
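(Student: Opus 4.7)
The strategy is to apply Theorem~\ref{minimaxisomain} with a judicious choice of allocation $(v_1,\ldots,v_m)$, where $m = \lceil n^\alpha\rceil$. Specifically, I would set $v_i = V/m$ for every $i$, so that $\sum_i v_i = V$. With this symmetric choice the three quantities inside the pointwise minimum become
\begin{equation*}
A = n^{1-\alpha}\sigma^2,\qquad B = (V/m)^{2/3}\sigma^{4/3}\, n^{(1-\alpha)/3},\qquad C = n^{1-\alpha}(V/m)^2,
\end{equation*}
and the sum in Theorem~\ref{minimaxisomain} collapses to $(cm/n)\min\{A,B,C\}$.

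The plan is then to verify that $B$ is the active term under the assumed range of $V$. Comparing $B$ with $A$ amounts to $V/m\lesssim \sigma n^{1-\alpha}$, i.e.\ $V\lesssim \sigma n$, which is ensured by the upper hypothesis $V\leq n$. Comparing $B$ with $C$ amounts to $V/m \gtrsim \sigma n^{-(1-\alpha)/2}$, i.e.\ $V\gtrsim \sigma n^{(3\alpha-1)/2}$; since $\alpha\leq 1/3$ makes the exponent $(3\alpha-1)/2$ non-positive, the lower hypothesis $V\geq n^{3\alpha-1}$ is precisely what lets this comparison pass (up to absorbing $\sigma$ into constants). This case analysis is the only step that is not purely mechanical, and it is the main obstacle: one has to trace the ordering of $A$, $B$, $C$ carefully across the permissible $(V,\alpha)$ regime and confirm that no boundary case spoils the choice of the middle term.

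Once $B$ is identified as the minimum, direct substitution gives
\begin{equation*}
\frac{c\,m}{n}\cdot B \;=\; \frac{c\,n^\alpha}{n}\cdot V^{2/3}\,n^{-2\alpha/3}\,\sigma^{4/3}\, n^{(1-\alpha)/3} \;=\; c\,V^{2/3}\sigma^{4/3}\, n^{\alpha/3+(1-\alpha)/3-1} \;=\; c\Big(\frac{V\sigma^2}{n}\Big)^{2/3},
\end{equation*}
using $m\asymp n^\alpha$ and the telescoping of the exponents of $n$ to $-2/3$. This is exactly the rate claimed in the corollary, so the argument is complete once the minimum-identification step is settled. Everything past that point is routine algebra.
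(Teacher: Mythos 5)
Your approach is the same as the paper's: apply Theorem~\ref{minimaxisomain} with the uniform allocation $v_i = V/m$ (the paper writes $v_i = \sigma V/n^{\alpha}$, evidently a typo for the same choice), argue that the middle term of the minimum is the active one, and do the exponent arithmetic. Your final computation is correct and reproduces the paper's conclusion.

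However, the one step you yourself single out as the crux --- verifying $B \le C$ --- is asserted rather than proved, and as asserted it is backwards. That comparison requires $V/m \gtrsim \sigma n^{-(1-\alpha)/2}$, i.e.\ $V \gtrsim \sigma n^{(3\alpha-1)/2}$. For $\alpha < 1/3$ the exponent $(3\alpha-1)/2$ is negative, so $n^{(3\alpha-1)/2} > n^{3\alpha-1}$: the threshold you need is \emph{larger} than the one the hypothesis supplies, so $V \ge n^{3\alpha-1}$ does not imply $V \ge \sigma n^{(3\alpha-1)/2}$. For $V$ in the intermediate range $[\,n^{3\alpha-1}, \sigma n^{(3\alpha-1)/2})$ the minimum in Theorem~\ref{minimaxisomain} is the third term, and the resulting bound is $c\,V^2 n^{-2\alpha}$, which is strictly smaller than the claimed $c\,(V\sigma^2/n)^{2/3}$; indeed in that range the claimed rate cannot hold, since the zero estimator already achieves risk $V^2 n^{-2\alpha} < (V\sigma^2/n)^{2/3}$ (take $\alpha=0$, $\sigma=1$, $V=1/n$ for a concrete instance). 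To be fair, the paper's own proof makes exactly the same unexamined leap, so you have faithfully reproduced it --- but to make the argument airtight the lower restriction on $V$ should read $V \gtrsim \sigma n^{(3\alpha-1)/2}$ (or the claim should be restricted accordingly).
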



Thus, under the assumptions of this corollary,
the LSE is minimax rate optimal up to a constant factor.

For the range $1/3 \leq \alpha \leq 1$, we prove lower bounds in a
different manner, using a similar strategy as was used in 
the lower bound for the star graph.

\begin{theorem}\label{simpminimax}
For fixed $1/3 < \alpha \leq 1$,
\begin{equation*}
\inf_{\tilde{\mu}} \sup_{\mu \in \F_{n,\alpha,V}} \E \frac{1}{n} \|\tilde{\mu} - \mu\|^2 \geq 
\begin{cases}
\displaystyle c \frac{V^2}{n^{\alpha}} & \;\;\text{in case}\ \displaystyle 0 < V \leq \frac{\sigma \sqrt{\log n}}{n^{(1 - \alpha)/2}}\\[10pt]
\displaystyle c_{\epsilon} V \sigma \frac{\sqrt{\log n}}{n^{(1 +
    \alpha)/2}} & 
 \;\; \text{for}\;\ \displaystyle \frac{\sigma \sqrt{\log n}}{n^{(1 - \alpha)/2}} < V \leq n^{(1 + \alpha)/2 \:-\: \epsilon} \sigma,\\
\end{cases} 
\end{equation*}
for any $\epsilon  > 0$, where $c$ is a universal constant and
$c_{\epsilon}$ is a constant only depending on $\epsilon$.
\end{theorem}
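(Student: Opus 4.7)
The plan is to reduce to a simplex-estimation problem on the $m = \lceil n^{\alpha}\rceil$ paths and then invoke Theorem~\ref{simplexminimax}, mimicking the proof strategy used for the star graph. The key sub-parameter space is the set of flows that are constant along each path. Concretely, I would fix $\mu_1 = V$ at the root and consider
\begin{equation*}
\mathcal{S} = \Bigl\{\mu \in \F_{n,\alpha,V} : \mu^{(j)}_k = v_j \ \text{for all}\ 1\leq k\leq l,\ \text{with}\ v_j\geq 0,\ \textstyle\sum_j v_j \leq V\Bigr\}.
\end{equation*}
Every element of $\mathcal{S}$ is a valid flow: monotonicity along each path is trivial and the simplex constraint at the root is built in. Thus it suffices to lower bound the minimax risk over $\mathcal{S}$.

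Next, I would carry out a Pythagoras-type reduction. For any estimator $\tilde\mu$, let $\bar{\tilde\mu}_j = \tfrac{1}{l}\sum_{k=1}^l \tilde\mu^{(j)}_k$ be the path averages. Since $\mu$ is constant along each path in $\mathcal{S}$, projecting $\tilde\mu$ onto the constant-along-path subspace can only decrease the error, giving $\|\tilde\mu-\mu\|^2 \geq l\sum_{j=1}^m (\bar{\tilde\mu}_j - v_j)^2$ for $\mu\in\mathcal{S}$. The observations $Y^{(j)}_k = v_j + \epsilon^{(j)}_k$ reduce by sufficiency to $\bar Y^{(j)} = v_j + \eta_j$ with $\eta_j \sim N(0,\sigma^2/l)$ independent. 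Thus the minimax risk over $\mathcal{S}$ is bounded below by $(l/n)$ times the minimax risk of estimating $v$ in the simplex $\{v\geq 0,\ \sum v_j\leq V\}$ from Gaussian observations with noise variance $\tilde\sigma^2 := \sigma^2/l = \sigma^2/n^{1-\alpha}$.

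This reduced problem is exactly the star-graph problem to which Theorem~\ref{simplexminimax} applies, with $n\mapsto m=\lceil n^\alpha\rceil$ and $\sigma\mapsto\tilde\sigma$. For $V\leq \tilde\sigma\sqrt{\log m}$ it yields $\sum_j(\tilde v_j - v_j)^2 \gtrsim V^2$, and for $\tilde\sigma\sqrt{\log m} < V \leq \tilde\sigma m^{1-\epsilon}$ it yields $\sum_j(\tilde v_j - v_j)^2 \gtrsim_\epsilon V\tilde\sigma\sqrt{\log m}$. Multiplying through by $l/n \asymp 1/n^\alpha$ and using $\sqrt{l\log m} \asymp n^{(1-\alpha)/2}\sqrt{\log n}$ gives exactly the two claimed bounds $cV^2/n^\alpha$ and $c_\epsilon V\sigma\sqrt{\log n}/n^{(1+\alpha)/2}$, and converts the threshold $\tilde\sigma\sqrt{\log m}$ into $\sigma\sqrt{\log n}/n^{(1-\alpha)/2}$, matching the statement.

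The main obstacle is in the upper end of the range of $V$: black-box application of Theorem~\ref{simplexminimax} as above permits only $V\leq \tilde\sigma m^{1-\epsilon} = \sigma n^{3\alpha/2 - 1/2 - \alpha\epsilon}$, which is strictly smaller than the claimed range $V\leq \sigma n^{(1+\alpha)/2-\epsilon}$ whenever $\alpha<1$. To cover the full range, I would not invoke Theorem~\ref{simplexminimax} as a black box, but rather inspect its proof and rerun the underlying construction on the Gaussian simplex problem with parameters $(m,\tilde\sigma)$. The lower bound in that proof is obtained via a packing argument using sparse vectors supported on a $k$-subset of the coordinates with magnitude $V/k$, and the range over which the packing argument goes through is governed by an Assouad/Fano calculation whose constraints I would re-express in the native scales $(n,V,\sigma,\alpha)$. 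I expect the resulting admissible range to be exactly $V\lesssim \sigma n^{(1+\alpha)/2-\epsilon}$, since that is precisely the threshold at which the lower bound $c_\epsilon V\sigma\sqrt{\log n}/n^{(1+\alpha)/2}$ ceases to be smaller than the trivial risk $\sigma^2$ and therefore becomes vacuous.
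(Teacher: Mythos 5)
Your reduction to the path-constant sub-model and the rescaling $(n,\sigma)\mapsto(m,\tilde\sigma)$ with $\tilde\sigma=\sigma/\sqrt{l}$ is exactly the route the paper intends (its proof is given only as ``very similar to Theorem~\ref{simplexminimax}, via Varshamov--Gilbert and Fano, details omitted''), and your computations of the two rates and of the threshold $\tilde\sigma\sqrt{\log m}\asymp\sigma\sqrt{\log n}/n^{(1-\alpha)/2}$ are correct, as is your identification of the black-box range $V\lesssim\tilde\sigma m^{1-\epsilon}=\sigma n^{(3\alpha-1)/2-\alpha\epsilon}$. The genuine gap is in your last paragraph: re-running the Varshamov--Gilbert/Fano construction ``in native scales'' cannot extend the admissible range to $V\leq\sigma n^{(1+\alpha)/2-\epsilon}$, and this is not a matter of constants. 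The sub-model $\mathcal{S}$ of path-constant flows has minimax risk at most $\tfrac{1}{n}\bigl((m+1)\sigma^2\bigr)\asymp\sigma^2 n^{\alpha-1}$, achieved by the coordinatewise path-average estimator; since $c_\epsilon V\sigma\sqrt{\log n}/n^{(1+\alpha)/2}$ exceeds $\sigma^2 n^{\alpha-1}$ once $V\gg\sigma n^{(3\alpha-1)/2}$, no packing of $\mathcal{S}$ can yield the stated bound in the window $\sigma n^{(3\alpha-1)/2}\ll V\leq\sigma n^{(1+\alpha)/2-\epsilon}$. Your heuristic---that the range should extend to where the bound meets the trivial risk $\sigma^2$---points at the wrong ceiling: the relevant saturation level of your sub-model is $\sigma^2 n^{\alpha-1}$, not $\sigma^2$. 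Concretely, the Fano calculation forces $k\asymp V\sqrt{l}/(\sigma\sqrt{\log n})$ together with $k\leq m/60$, hence $V\lesssim\sigma n^{(3\alpha-1)/2}\sqrt{\log n}$, matching the information-theoretic ceiling just described.

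To go beyond that window one must leave the path-constant family---for instance by combining with the isotonic-type bound of Theorem~\ref{minimaxisomain}, where $v_i=V/m$ yields $(V\sigma^2/n)^{2/3}$; but that quantity dominates $V\sigma\sqrt{\log n}/n^{(1+\alpha)/2}$ only for $V\lesssim\sigma n^{(3\alpha-1)/2}$ (modulo logarithms), so it does not close the gap either. Since the paper supplies no details, its stated range may itself be optimistic; in any case, as written your proposal proves the second case of the theorem only for $V\lesssim\sigma n^{(3\alpha-1)/2-\epsilon'}$, and the claim that the re-derivation will recover the full stated range is unsupported and, within the sub-model $\mathcal{S}$, provably false.
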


\vskip10pt
In the first case of this result, the trivial estimator $\hat\mu=0$ attains the
optimal rate. However, in the second case, where $V$ is large, the
lower bound does not match our bound for the LSE in Theorem~\ref{simplexupbd}.
Our next result shows that, in fact, our minimax lower bound is
tight (up to log factors) in this regime.
\vskip10pt

\begin{theorem}\label{minimaxupbd}
For any $1/3 \leq \alpha \leq 1$, we have the upper bound
\begin{equation}
\inf_{\tilde{\mu}} \sup_{\mu \in \F_{n,\alpha,V}} \E \frac{1}{n} \|\tilde{\mu} - \mu\|^2 \leq 21 V\sigma\:\frac{\sqrt{\log(en)}}{n^{(1 + \alpha)/2}} + 16 \sigma^2 \frac{1 + \log(en)}{n}.
\end{equation}
\end{theorem}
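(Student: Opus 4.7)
The plan is to exhibit an explicit estimator $\tilde\mu$ that attains the stated risk. Given that $V\sigma\sqrt{\log(en)}/n^{(1+\alpha)/2}$ is precisely the rate of simplex regression on $m=\lceil n^\alpha\rceil$ coordinates with per-coordinate noise variance $\sigma^2/l$, $l=\lceil n^{1-\alpha}\rceil$, the natural candidate is obtained by collapsing each path to its average. Set $\bar Y^{(i)} = l^{-1}\sum_{j=1}^l Y_j^{(i)}$, and let
\[
\hat c \;=\; \argmin_{c \in S_V}\sum_{i=1}^m (\bar Y^{(i)} - c_i)^2, \qquad S_V = \Bigl\{c \in \R_{\geq 0}^m : \sum_i c_i \leq V\Bigr\}.
\]
Define $\tilde\mu_j^{(i)} = \hat c_i$ for every $j \in [l]$, estimating the root by truncation of $Y_1$ to $[0,V]$.

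I would analyze this estimator via the exact identity
\[
\|\tilde\mu-\mu\|^2 \;=\; l\sum_{i=1}^m(\hat c_i - \bar\mu^{(i)})^2 \;+\; \sum_{i=1}^m\sum_{j=1}^l(\mu_j^{(i)} - \bar\mu^{(i)})^2,
\]
obtained from $\sum_j(\mu_j^{(i)} - \bar\mu^{(i)})=0$. The stochastic term reduces to simplex regression on $\bar Y \sim N(\bar\mu,(\sigma^2/l)I_m)$ with $\bar\mu \in S_V$ (since $\bar\mu^{(i)} \leq \mu_1^{(i)}$ and $\sum_i \mu_1^{(i)} \leq V$). Applying Theorem~\ref{lse_upbd_shallow} with height $h=1$ and noise variance $\sigma^2/l$ yields
\[
l\cdot \E\sum_i(\hat c_i - \bar\mu^{(i)})^2 \;\leq\; C\bigl(\sigma^2 \log m + V\sigma\sqrt{l\log m}\bigr),
\]
which after dividing by $n=ml$ produces the main term $V\sigma\sqrt{\log n}/n^{(1+\alpha)/2} + \sigma^2\log n/n$. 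Tracking the constants in Theorem~\ref{lse_upbd_shallow} through this calculation accounts for the factors $21$ and $16$ in the statement.

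The main obstacle is the approximation term $\sum_{i,j}(\mu_j^{(i)} - \bar\mu^{(i)})^2$. By Popoviciu's inequality and monotonicity along each path, this is at most $(l/4)\sum_i (\mu_1^{(i)})^2 \leq lV^2/4$, yielding a normalized contribution of $V^2/(4n^\alpha)$. This exceeds the target once $V$ grows past $\sigma\sqrt{\log n/l}$, so the single projection estimator is insufficient across the whole parameter space. I would close this gap by combining $\tilde\mu$ with auxiliary estimators selected according to the value of $V$ (which may be used since the supremum is over $\mu \in \F_{n,\alpha,V}$): for $V \gtrsim \sigma n^{(1+\alpha)/2}/\sqrt{\log n}$ the claimed bound itself exceeds $\sigma^2$, so the trivial estimator $\tilde\mu = Y$ suffices; for intermediate $V$ a refined version of $\tilde\mu$ that subdivides each path into $O(\log n)$ blocks and performs simplex regression on each block's average traded off so that the approximation error remains below $V\sigma\sqrt{\log n}/n^{(1+\alpha)/2}$. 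Verifying that such a three-regime construction collapses into the single bound of the theorem, with the explicit constants $21$ and $16$, is the most delicate part of the argument.
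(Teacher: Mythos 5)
Your estimator handles the stochastic part correctly --- collapsing each path to its average and running simplex regression on $\bar Y \sim N(\bar\mu,(\sigma^2/l)I_m)$ does reproduce the term $V\sigma\sqrt{\log n}/n^{(1+\alpha)/2}$ --- but the approximation term is a genuine obstruction, not a delicate detail, and the patch you propose does not close it. If each path is split into $k$ blocks and each block is replaced by a (simplex-regressed) constant, the squared bias is of order $lV^2/k$ in total (your Popoviciu bound, refined blockwise), while each of the $k$ block levels contributes a stochastic term of order $\sigma^2\log m + V\sigma\sqrt{(l/k)\log m}$, so the total stochastic cost grows like $V\sigma\sqrt{kl\log m}$. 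Balancing $lV^2/k$ against $V\sigma\sqrt{kl\log m}$ forces $k \asymp (V\sqrt{l}/\sigma)^{2/3}$ and yields a risk of order $(V^2\sigma\sqrt{l})^{2/3}$ per unit --- an isotonic-type $2/3$ rate that is strictly larger than the target $V\sigma\sqrt{l\log n}$ throughout the intermediate range of $V$ (e.g.\ for $V,\sigma$ constant it gives $n^{2(1-\alpha)/3}$ versus the claimed $n^{(1-\alpha)/2}$ for the total squared error). With $k=O(\log n)$ as you suggest, the bias alone, $V^2/(kn^\alpha)$ after normalization, exceeds the target for all $V \gtrsim \sigma\sqrt{\log n}\,(\log n)/\sqrt{l}$. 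No choice of $k$ rescues this family of estimators, which is consistent with Theorem~\ref{lselbd} and the surrounding discussion: the paper explicitly notes that the proof of this theorem does not produce a computationally efficient estimator and poses the construction of one as an open problem.

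The missing idea is that the paper avoids any explicit bias--variance tradeoff of this kind. It applies Theorem~\ref{minimaxupbdp}, which bounds the risk of the least squares estimator restricted to a finite $\epsilon$-net of the \emph{entire} class $\F_{n,\alpha,V}$ by $16\sigma^2\log N(\epsilon,\F_{n,\alpha,V}) + 3\epsilon^2$; here the metric-entropy term enters multiplied by $\sigma^2$ rather than being squared against $V$. Combining this with the Maurey-type covering bound of Lemma~\ref{lem:cover}, $\log N(\epsilon) \lesssim (V^2 l/\epsilon^2)\log(en)$, and optimizing $\epsilon^2 \asymp \sigma V\sqrt{l\log(en)}$ gives the stated bound directly, in one regime, with the constants $21$ and $16$ falling out of Theorem~\ref{minimaxupbdp} and the choice of $\epsilon$. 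If you want to keep your explicit-estimator route, you would need a fundamentally different construction (the paper's Remark suggests one based on detecting which children of the root carry non-negligible flow), not a refinement of block averaging.
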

 
\vskip5pt
This bound is established by considering a least squares estimator on
an appropriate finite net over the space of flows $\F(\T_{n,\alpha})$. The proof is
thus an information theoretic argument, and does not exhibit
a computationally efficient estimator.

\begin{remark}
As a possible avenue for constructing
a computationally efficient estimator, 
suppose that a procedure can detect which of the children of the root
have small flow value.
If the entire path under such nodes is estimated as zero, and 
otherwise an isotonic regression is fit, this might attain the minimax
rate.  We leave this as a topic for future research.
\end{remark}

\subsection{Lower Bound for the LSE}

Finally, we address the question of whether the LSE is rate optimal
for the range $1/3 < \alpha \leq 1$, noting that the $\tilde O(n^{1 -
  \alpha})$ upper bound of Theorem~\ref{simplexupbd} does
not match the $\tilde \Omega(n^{(1 - \alpha)/2})$ lower bound of 
Theorem~\ref{simpminimax}.  The following result 
asserts that there is an actual gap.

\begin{theorem}\label{lselbd}
For any $V  > 0$, the worst case risk of the least squares estimator
satisfies the lower bound
\begin{equation*}
\sup_{\mu \in \F_{n,\alpha,V}} \E \frac{1}{n} \|\hat{\mu} - \mu\|^2
\geq \begin{cases} 
\pi^2 \sigma^2 n^{\alpha-1} & 1/3 \leq \alpha \leq
  1/2 \\[7pt] 
\pi^2 \sigma^2 n^{-\alpha} & 1/2 \leq \alpha \leq 1.
\end{cases}
\end{equation*}
\end{theorem}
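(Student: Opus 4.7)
Since $V > 0$, the flow $\mu = 0$ belongs to $\F_{n,\alpha,V}$, so the worst-case risk is bounded below by the risk at the origin. Under $\mu = 0$ we have $Y = \sigma Z$ with $Z \sim N(0, I_n)$, and because $\F(\T_{n,\alpha})$ is a closed convex cone with apex at $0$, the LSE is simply $\hat\mu = \Pi_{\F(\T_{n,\alpha})}(\sigma Z)$. The vertex identity $\E\|\Pi_K(\sigma Z)\|^2 = \sigma^2 \delta(K)$ for closed convex cones (applied with $K = \F(\T_{n,\alpha})$) then reduces the theorem to the cone-geometric bound on the statistical dimension,
\[\delta\bigl(\F(\T_{n,\alpha})\bigr) \;\ge\; \pi^2\, n^{\min(\alpha,\,1-\alpha)}.\]

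To establish this, I would exhibit a subcone $K \subseteq \F(\T_{n,\alpha})$ of sufficient statistical dimension and apply $\delta(\F) \ge \delta(K)$. A natural candidate is the subcone that enforces monotonicity and nonnegativity along each path while tying the root down to the first-level sum,
\[K = \Big\{\mu \in \R^n : \mu^{(j)} \in M_l^{+} \text{ for each } j, \; \mu_0 = \sum_{j=1}^m \mu_1^{(j)} \Big\},\]
where $M_l^+$ is the monotone nonnegative cone in $\R^l$. Projecting $\sigma Z$ onto $K$ decouples into $m$ isotonic regressions on the paths, coupled only through the root term. Each isotonic factor contributes the harmonic number $H_l = \sum_{k=1}^l 1/k$ to the statistical dimension, so after tracking the root-level coupling I expect $\delta(K) \gtrsim m H_l$. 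This gives $\delta(\F) \gtrsim n^\alpha$ in the regime $1/3 \le \alpha \le 1/2$, and in the regime $1/2 \le \alpha \le 1$ the inequality $m \ge l$ yields $\delta(\F) \gtrsim l H_l \ge n^{1-\alpha}$, matching the two branches of the theorem.

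The main obstacle is the explicit prefactor $\pi^2$: the harmonic-number estimate alone supplies only a $\log l$ constant, whereas the theorem calls for the numerical value $\pi^2 \approx 9.87$, pointing to a sharper spectral input. To close this gap I would instead pursue a Moreau decomposition against the polar cone $\F^\circ$, whose generators $\{-e_i + \sum_{j \in C(i)} e_j\}_i$ have pairwise inner products that coincide with the graph Laplacian of $\T_{n,\alpha}$ plus a rank-one perturbation at the root. The discrete path Laplacian has smallest nonzero eigenvalue $4\sin^2(\pi/(2l)) \approx \pi^2/l^2$ with a sine-valued eigenvector, and tracking this eigenvector through the Moreau decomposition is the natural source of the factor $\pi^2$. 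A secondary task is to verify that the bound extends continuously across the transition $\alpha = 1/2$ and that the root-simplex coupling does not degrade the constant when combining the per-path contributions.
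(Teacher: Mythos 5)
Your reduction is the same as the paper's: restrict to $\mu = 0$, use the cone identity of Lemma~\ref{cone} to write $\|\hat\mu\| = \sup_{\nu \in \F_{n,\alpha}:\|\nu\|\leq 1}\langle Z,\nu\rangle$, and observe that it suffices to lower bound the Gaussian width of $\F_{n,\alpha}\cap B(0,1)$ (equivalently the statistical dimension $\delta(\F_{n,\alpha})$, which dominates the squared width). Where you diverge is in the strategy for the lower bound itself, and there your plan has a genuine and fatal gap.

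The step $\delta(K)\gtrsim m H_l$ is false, and one can see this without any hard analysis. Since your $K\subseteq \F_{n,\alpha}$, monotonicity would give $\delta(\F_{n,\alpha})\gtrsim m H_l \asymp n^{\alpha}\log n$. But Theorem~\ref{simplexupbd}, evaluated at $\mu=0$ with $V\downarrow 0$, yields $\E\|\hat\mu\|^2 = \sigma^2\,\delta(\F_{n,\alpha})\lesssim \sigma^2 n^{1-\alpha}(\log n)^3$. For $\alpha$ bounded away from $1/2$ and $n$ large these are incompatible: $n^{\alpha}\log n$ cannot be $\lesssim n^{1-\alpha}(\log n)^3$. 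The culprit is exactly the root constraint you proposed to ``track.'' In the projection onto $K$ one minimizes $(Z_0 - \sum_j\mu_1^{(j)})^2 + \sum_j\|Z^{(j)}-\mu^{(j)}\|^2$, and each unconstrained isotonic projection would produce $\hat\mu_1^{(j)}=\Theta(\sigma)$, so $\sum_j \hat\mu_1^{(j)}$ would be $\Theta(m\sigma)$, vastly exceeding $Z_0\asymp\sigma$. The first term then drives nearly all of the $\mu_1^{(j)}$ — and by monotonicity and nonnegativity the entire paths beneath them — to zero. The root coupling is not a benign rank-one perturbation; it is precisely what makes the exponent turn over at $\alpha=1/2$. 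Any argument that sums per-path contributions as if the root were free is doomed.

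The paper's proof never passes through a subcone at all. It exhibits, separately for $\alpha\leq 1/2$ and $\alpha\geq 1/2$, an explicit random vector $v$ that is constant on each path, with the constant on path $i$ aligned with the sign of the path sum $S^{(i)}=\sum_j Z^{(i)}_j$, and simply computes $\E\langle Z,v\rangle \propto \frac{1}{\sqrt n}\sum_i \E|S^{(i)}| \asymp \frac{(\#\text{active paths})\sqrt{l}}{\sqrt n}\,\sigma$. For $\alpha\leq 1/2$ all $m$ paths can be active while the root value $1/2$ still dominates the children's sum, giving $n^{\alpha/2}$; for $\alpha\geq 1/2$ only $\sqrt n$ of the $m$ paths can be activated without overrunning the root budget, which is exactly where $n^{(1-\alpha)/2}$ comes from. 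Squaring gives the two branches. This is far more elementary than what you sketched. Finally, the $\pi^2$ detour is a red herring: the constant in the theorem arises from the elementary evaluation of $\E|S^{(i)}|$ for a centered Gaussian of variance $l\sigma^2$, not from a path-Laplacian eigenvalue, a Moreau decomposition, or $\sin^2(\pi/(2l))$. The spectral route would be a long detour for no gain.
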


The lower bound in Theorem~\ref{lselbd} actually holds when $\mu=0$ is
the zero flow; the proof of this result is given in Section~\ref{zeroproof}.

Taken together, all of these bounds characterize the exponents
in the worst case risk for the LSE and any possible flow
estimator, as portrayed graphically in Figure~3.
They imply the perhaps surprising conclusion
that the LSE is not rate optimal over the range $1/3\leq \alpha \leq
1$, considering $V$ and $\sigma$ to be fixed constants.

\begin{figure}[t]
\begin{center}
\begin{tabular}{cc}
\\[.75in]
\hskip-.3in
\rm risk exponent & 
\\[-1.6in]
& \hskip-15pt \includegraphics[width=.7\textwidth]{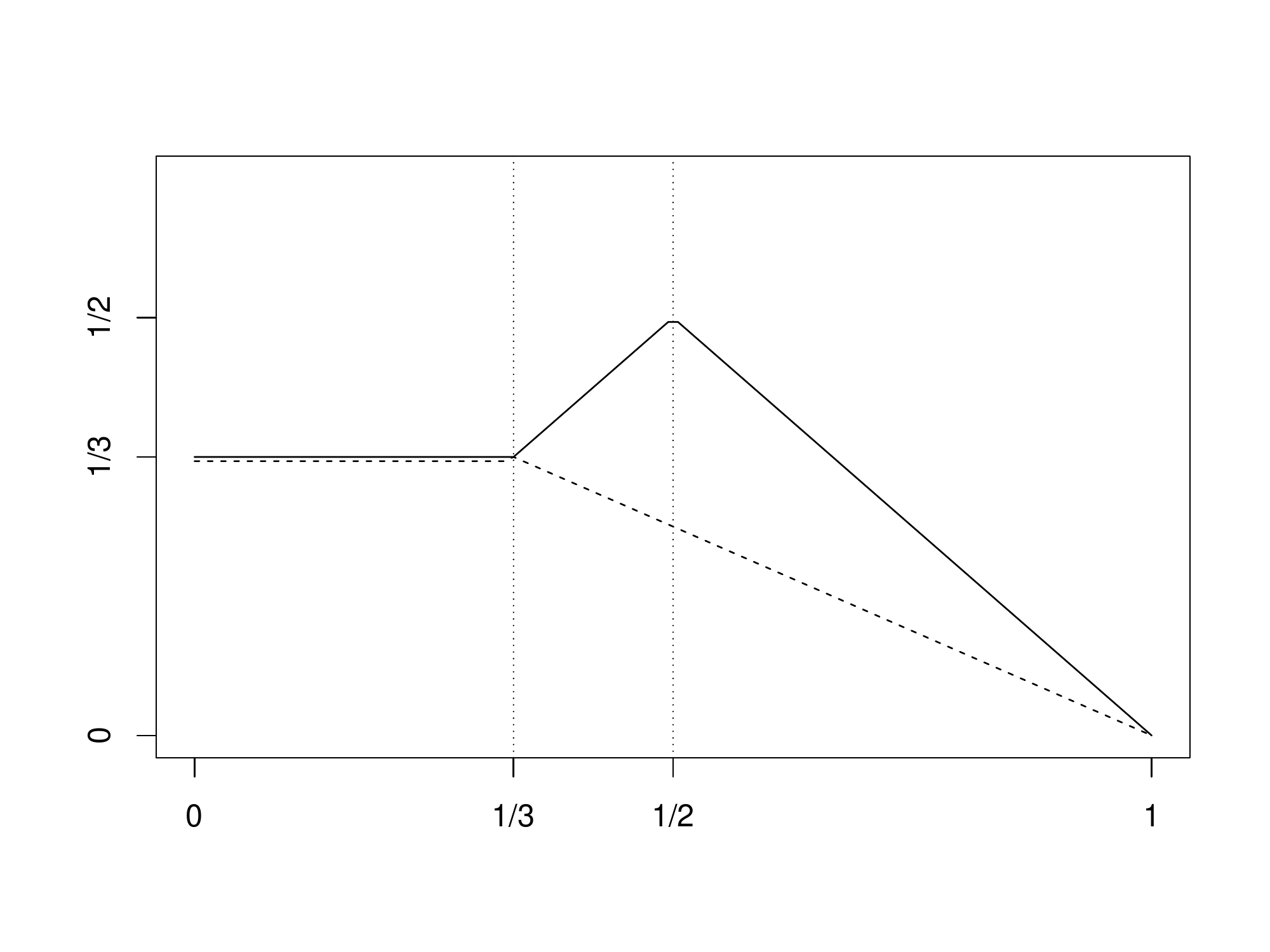}\\[-25pt]
& path length parameter $\alpha$
\end{tabular}
\end{center}
\caption{
With $\alpha=0$ the tree $T_{n,\alpha}$ is a single path,
and with $\alpha=1$ the graph is a star; in general
the tree has $m=\lceil n^\alpha\rceil$ paths, each of length $l=\lceil
n^{1-\alpha}\rceil$.
This plot shows the leading exponent (thus disregarding log factors) in 
the expected squared error $\E \|\hat \mu - \mu\|^2$ (which is $n$
times the risk) of the LSE (solid line) the minimax lower bound (dashed line).
Our sequence of results establishes that these exponents are tight, 
and thus exhibit an unusual gap in a shape-constrained estimation
problem between the performance of the least
squares estimator and the best possible estimator.}
\label{fig:knife}
\end{figure}

\section{Proofs}\label{sec:proofs}

\begin{table}
\begin{center}
\begin{small}
\renewcommand{\arraystretch}{1.3}
\begin{tabular}{|l|l|l|l|}
\multicolumn{2}{c}{\textit{Result}} & \multicolumn{1}{c}{\textit{Proof technique}}  \\
\hline
Theorem~\ref{lse_upbd_shallow} & Upper bound for LSE; shallow trees & Gaussian supremum functional\\
Theorem~\ref{simplexminimax} & Minimax lower bound; shallow trees & Fano's lemma \\
Theorem~\ref{lse_lb_shallow} &  Lower bound for LSE; shallow trees & Gaussian widths \\
Theorem~\ref{isotonicupbd} & Isotonic upper bound for LSE; deep trees & Statistical dimension\\
Theorem~\ref{simplexupbd}&  Simplex upper bound for LSE; deep trees & Chaining and entropy bounds \\
Theorem~\ref{isominimax}&  Minimax lower bound; monotone sequences & Assouad's lemma\\
Theorem~\ref{minimaxisomain}& Minimax lower bound; deep trees, $\alpha \leq \frac{1}{3}$ & Minimax for isotonic regression\\
Theorem~\ref{simpminimax}&  Minimax lower bound; deep trees, $\alpha \geq \frac{1}{3}$ & Fano's lemma\\
Theorem~\ref{minimaxupbd}& Tightness of minimax lower bound, $\alpha \geq \frac{1}{3}$ & Covering numbers, LSE on a net\\
Theorem~\ref{lselbd}& Tightness of LSE upper bound, $\alpha \geq \frac{1}{3}$ & Gaussian widths\\
\hline
\end{tabular}
\end{small}
\end{center}
\vskip10pt
\caption{A catalogue of the results, with the main techniques used in their proofs.}
\label{tab:techniques}
\end{table}

In this section we give the proofs of the results described in the previous section. 
These results characterize the performance of the LSE for flows and the minimax rates
of convergence, and reveal a gap between the LSE and the best possible estimators for flows
on trees. In order to get sharp results, we employ a variety of proof techniques,
as shown in Table~\ref{tab:techniques}.

\subsection{Proof of Theorem~\ref{lse_upbd_shallow}: Upper bound on
  LSE for shallow trees}

A key result we use to prove our risk bounds for the LSE is the following technique
based on a quadratic Gaussian supremum functional, due to Sourav Chatterjee~\citep{Chat14}.
\begin{theorem}[Chatterjee, 2014]\label{chatthm}
Fix a rooted tree $T$ and a flow $\mu \in \F(T)$. Define the function $f_{\mu}: \R_{+}
\rightarrow \R$ as 
\begin{equation}\label{eq:chat}
  f_{\mu}(t)  := \E \left(\sup_{\nu \in \F: \|\nu - \mu\| \leq t} \left<Z, \nu - \mu \right> \right) - \frac{t^2}{2}
\end{equation}
where $Z$ is an $n$-dimensional mean zero Gaussian vector with covariance matrix $\sigma^2 I$.
Let $t^* > 0$ satisfy $f_{\mu}(t^*) \leq 0$. Then there
exists a  universal positive constant $C$ such that   
\begin{equation}\label{cim}
\E \frac{1}{n} \|\hat{\mu} - \mu\|^2 \leq \frac{C}{n} \max \left(t^{*\kern.01ex 2}, \sigma^2 \right).  
\end{equation}
\end{theorem}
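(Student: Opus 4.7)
The plan is to follow the standard optimality-plus-concentration route. First, I would set $Z = Y - \mu \sim N(0,\sigma^2 I)$ and $\hat t := \|\hat\mu - \mu\|$, and exploit the variational inequality characterizing the Euclidean projection of $Y$ onto the closed convex set $\F$: the condition $\langle Y-\hat\mu,\,\nu-\hat\mu\rangle\leq 0$ for all $\nu\in\F$, specialized to $\nu = \mu \in \F$, yields the basic deterministic inequality
\begin{equation*}
\hat t^{\,2} \;\leq\; \langle Z,\, \hat\mu - \mu\rangle \;\leq\; M(\hat t),
\qquad M(t) \,:=\, \sup_{\nu\in\F,\ \|\nu-\mu\|\leq t} \langle Z,\, \nu-\mu\rangle,
\end{equation*}
which reduces the problem to controlling the localized Gaussian supremum $M(t)$ at the random scale $\hat t$.

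The next step is a sample-path monotonicity argument. Because $\F$ is convex and contains $\mu$, it is star-shaped about $\mu$: for any $\nu\in\F$ and $s\in(0,1)$ the rescaled point $\mu + s(\nu-\mu)$ lies in $\F$ at distance $\leq s\|\nu-\mu\|$ from $\mu$, and $\langle Z,\, s(\nu-\mu)\rangle = s\langle Z,\,\nu-\mu\rangle$. Taking suprema shows that $t \mapsto M(t)/t$ is non-increasing on $(0,\infty)$, and the same holds in expectation for $\E M(t)/t$. Consequently $f_\mu(t)/t = \E M(t)/t - t/2$ is strictly decreasing, so the hypothesis $f_\mu(t^*) \leq 0$ upgrades to $f_\mu(t) \leq 0$ for every $t \geq t^*$. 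Combined with the deterministic inequality above, on the event $\{\hat t > t\}$ one has $M(t)/t \geq M(\hat t)/\hat t \geq \hat t > t$, so $M(t) > t^2$, and therefore for $t\geq t^*$,
\begin{equation*}
\P(\hat t > t) \;\leq\; \P\bigl(M(t) - \E M(t) > t^2 - \E M(t)\bigr) \;\leq\; \P\bigl(M(t) - \E M(t) > \tfrac{t^2}{2}\bigr).
\end{equation*}

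To close, I would invoke Gaussian concentration: since $Z\mapsto M(t)$ is $t$-Lipschitz in $\ell^2$, the Borell--TIS inequality gives $\P(M(t) - \E M(t) > u) \leq \exp(-u^2/(2\sigma^2 t^2))$. Taking $u = t^2/2$ yields $\P(\hat t > t) \leq \exp(-t^2/(8\sigma^2))$ for every $t\geq t^*$, and integrating the tail,
\begin{equation*}
\E \hat t^{\,2} \;=\; \int_0^\infty 2t\, \P(\hat t > t)\, dt \;\leq\; (t^*)^2 + \int_{t^*}^\infty 2t\, e^{-t^2/(8\sigma^2)}\, dt \;\leq\; (t^*)^2 + 8\sigma^2 \;\leq\; 9\max\bigl((t^*)^2,\, \sigma^2\bigr),
\end{equation*}
so dividing by $n$ delivers \eqref{cim} with $C = 9$. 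The hard part is the star-convexity monotonicity lemma for $M(t)/t$: it is the step that couples the random argmax $\hat t$ to the deterministic crossing scale $t^*$ and converts an a priori delicate argmax control problem into a clean one-sided tail bound. Once that lemma is in hand, the Lipschitz/Borell--TIS concentration and the sub-Gaussian tail integration are essentially routine.
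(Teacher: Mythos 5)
Your proof is correct, and it is genuinely more self-contained than what the paper does: the paper does not prove this theorem at all, but imports it wholesale from Chatterjee (2014), combining his Theorem~1.1 (a two-sided concentration of $\|\hat\mu-\mu\|$ around the maximizer $t_\mu$ of $f_\mu$) with his Proposition~1.3 (the observation that $t_\mu\leq t^*$ whenever $f_\mu(t^*)\leq 0$). You instead give a direct one-sided argument, which is all the upper bound requires. Each step checks out: the variational inequality $\langle Y-\hat\mu,\mu-\hat\mu\rangle\leq 0$ does give $\hat t^{\,2}\leq\langle Z,\hat\mu-\mu\rangle\leq M(\hat t)$; star-shapedness of the convex set $\F$ about $\mu$ does make $M(t)/t$ pathwise non-increasing, hence $f_\mu(t)/t$ strictly decreasing, so $f_\mu(t^*)\leq 0$ propagates to all $t\geq t^*$; on $\{\hat t>t\}$ the chain $M(t)/t\geq M(\hat t)/\hat t\geq\hat t>t$ forces $M(t)>t^2$; Borell--TIS with supremal variance $\sigma^2t^2$ gives the stated tail; and the integration yielding $\E\hat t^{\,2}\leq (t^*)^2+8\sigma^2$ is right, so $C=9$ works. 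The trade-off is that your route proves exactly the upper bound \eqref{cim} and nothing more, whereas Chatterjee's cited theorem also delivers the matching concentration from below (which the paper does not use here, relying instead on separate Gaussian-width lower bounds). Your version is arguably preferable as a proof of the statement as written, and is essentially the now-standard ``localization plus Gaussian concentration'' argument for convex-constrained least squares.
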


The above theorem is a consequence of~\citet[Theorem 1.1]{Chat14} and~\citet[Proposition 1.3]{Chat14}. Let $t_{\mu}$ be the point in $[0, \infty)$ where $t \mapsto
f_{\mu}(t)$ attains its maximum; existence and uniqueness of
$t_{\mu}$ are proved in~\citet[Theorem 1.1]{Chat14}. Then there
exists a  universal constant $C$ such that   
\begin{equation*}
\E \frac{1}{n} \|\hat{\mu} - \mu\|^2 \leq \frac{C}{n} \max \left(t_{\mu}^2, \sigma^2 \right).  
\end{equation*}

This reduces the problem of bounding $\E \frac{1}{n} \|\hat{\mu} - \mu\|^2$ to that of bounding $t_{\mu}$. For this latter problem,~\citet[Proposition 1.3]{Chat14} observes that 
\begin{equation*}\label{chat2}
  \mbox{$t_{\mu} \leq t^{*}$ whenever $t^{*} > 0$ and $f_{\mu}(t^{*}) \leq 0$}.  
\end{equation*}
In order to bound $t_{\mu}$, one therefore seeks $t^{*} > 0$ such
that $f_{\mu}(t^{*}) \leq 0$. This now requires a bound on the
expected supremum of the Gaussian process in the definition of
$f_{\mu}(t)$ in  \eqref{eq:chat}.

\begin{proof}[Proof of Theorem~\ref{lse_upbd_shallow}]

Fix any $\mu \in \F(T_n)$. Let us denote by $g_\mu(t)$ the supremum
$$g_{\mu}(t) = \sup_{\nu \in \F(T_n): \|\nu - \mu\| \leq t} \langle Z,
\nu - \mu \rangle.$$ Denote by $L_i$ the set of nodes in level $1 \leq i \leq h$
in the tree $T_n$. Then we can write
\begin{align*}
g_{\mu}(t) &\leq \sup_{\nu \in \F(T_n): \|\nu - \mu\| \leq t} Z_1
(\nu_1 - \mu_1) + \sum_{i = 2}^{h}\; 
\sup_{\nu \in \F(T_n): \|\nu - \mu\| \leq t} \;\sum_{j \in L_i} Z_j (\nu_j - \mu_j) \\
&\leq t |Z_1| + \sum_{i = 2}^{h} \; \sup_{\nu \in \F(T_n): \|\nu - \mu\| \leq t} \;\Bigl(\sum_{j \in L_i} Z_j \nu_j  +  \sum_{j \in L_i} (-Z_j) \mu_j \Bigr).
\end{align*} 
The last inequality is because the constraint $\|\nu - \mu\| \leq t$
implies that $\nu_1 \leq \mu_1 + t$. The flow constraints imply,
within each level $L_i,$ that the sum $\sum_{j \in L_i} \nu_j$ is at
most $\mu_1 + t$. 
Therefore, the last equation can be further bounded to obtain
\begin{align*}
g_{\mu}(t) &\leq t |Z_1| + \sum_{i = 2}^{h} \bigl(\max_{j \in L_i} Z_j\bigr) (\mu_1 + t)  +  \bigl(\max_{j \in L_i} -Z_j\bigr) \mu_1
\end{align*}
where we have used the fact that $\sum_{i = 1}^{d} a_i b_i \leq (\sum_{i = 1}^{d} b_i) \max_{1 \leq i \leq d} a_i$ for any arbitrary vector $a$ and any nonnegative vector $b$.

Now taking the expectation of both sides, and using the fact that the
expectation of the maximum of $d$ independent $N(0,\sigma^2)$ random
variables is upper bounded by $\sigma \sqrt{2 \log d}$ and $E |Z_1| =
\sigma \sqrt{\frac{2}{\pi}}$,
we obtain
\begin{align*}
\E g_{\mu}(t) & \leq t \sigma \sqrt{\frac{2}{\pi}} + \sum_{i = 2}^{h}
(2 \mu_1 + t) \sigma \sqrt{2 \log |L_i|} \\
& \leq t \sigma \sqrt{\frac{2}{\pi}} + h (2 \mu_1 + t) \sigma \sqrt{2 \log \frac{n}{h}},
\end{align*}
where we have used Jensen's inequality in the second. 
Recalling that $f_{\mu}(t) = \E g_{\mu}(t) - t^2/2$, this implies that
$f_{\mu}(t^*) \leq 0$ where
\begin{equation*}
t^* = C \max\left\{\sigma h \sqrt{\log \frac{n}{h}},\; \sqrt{\mu_1 \sigma h \sqrt{\log \frac{n}{h}}}\right\}.
\end{equation*}
Theorem~\ref{chatthm} now finishes the proof of the upper bound.
\end{proof}

\subsection{Proof of Theorem~\ref{simplexminimax}: Minimax lower bound
for shallow trees}

To prove this minimax lower bound we use the following standard
version of Fano's lemma. For any set $\A \subset \R^n$ define a
$\delta$-packing set of $\A$ to be a finite subset of $\A$ such that
for any two distinct points $a \neq a' \in \A$ we have 
separation $\|a - a'\|^2 \geq \delta$.

\begin{lemma}[Fano]\label{fano}
If $W$ is a $\delta$-packing set of $\Theta$
then 
\begin{equation*}
\inf_{\hat{\theta}} \sup_{\theta \in \Theta} \E \|\hat{\theta} - \theta\|^2 
\geq \frac{\delta}{2} \Bigl(1 - \frac{\Delta + \log 2}{\log
  |W|}\Bigr)
\end{equation*}
where $\Delta = \max_{w \neq w'} \text{KL}(\P_{w},\P_{w'})$,
with $\P_w$ denoting the distribution of the data $Y$ when the true underlying parameter is $w$.
\end{lemma}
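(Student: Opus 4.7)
The plan is to reduce the estimation problem to a multi-way hypothesis testing problem over the packing set $W$, and then apply a classical information-theoretic inequality of the Fano type. First, I would fix any estimator $\hat\theta$ and define the minimum-distance decoder
\begin{equation*}
\hat w(\hat\theta) = \argmin_{w \in W} \|\hat\theta - w\|.
\end{equation*}
If the data is generated from $\P_{w^*}$ with $w^* \in W$, and $\hat w \neq w^*$, then because $W$ is a $\delta$-packing, $\|\hat w - w^*\|^2 \geq \delta$, and by the triangle inequality combined with the minimum-distance property of $\hat w$,
\begin{equation*}
\|\hat\theta - w^*\| \;\geq\; \tfrac{1}{2}\|\hat w - w^*\| \;\geq\; \tfrac{1}{2}\sqrt{\delta}.
\end{equation*}
Hence $\E_{w^*} \|\hat\theta - w^*\|^2 \geq (\delta/4)\,\P_{w^*}(\hat w \neq w^*)$, and taking the supremum over $w^*$ gives a lower bound on the worst-case risk in terms of the worst-case testing error (absorbing the constant into the statement as $\delta/2$ under a suitable convention for the packing).

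Next, I would place the uniform prior on $W$ and interpret the test as recovery of the latent label. The average error $\bar P_e = \frac{1}{|W|}\sum_{w^*}\P_{w^*}(\hat w \neq w^*)$ lower bounds the sup, so it suffices to lower bound $\bar P_e$. This is exactly the setting in which Fano's inequality from information theory applies: for any estimator of a discrete uniform $W$ from an observation $Y$,
\begin{equation*}
\bar P_e \;\geq\; 1 - \frac{I(W;Y) + \log 2}{\log |W|}.
\end{equation*}
I would invoke this standard inequality directly, obtained from $H(W\mid Y) \leq h_2(\bar P_e) + \bar P_e \log(|W|-1)$ and $H(W\mid Y) = \log|W| - I(W;Y)$.

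The final step is to replace the mutual information by the pairwise-KL quantity $\Delta$. Using the convexity identity $I(W;Y) = \frac{1}{|W|}\sum_w \text{KL}\bigl(\P_w \,\|\, \bar\P\bigr)$ with $\bar \P = \frac{1}{|W|}\sum_{w'}\P_{w'}$, and applying convexity of KL in its second argument,
\begin{equation*}
I(W;Y) \;\leq\; \frac{1}{|W|^2}\sum_{w,w'} \text{KL}(\P_w,\P_{w'}) \;\leq\; \max_{w\neq w'} \text{KL}(\P_w,\P_{w'}) \;=\; \Delta.
\end{equation*}
Chaining this with the Fano bound and the testing-to-estimation reduction yields the claimed inequality.

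I do not expect a genuine obstacle here, since each step is standard; the only subtle point is tracking constants in the reduction step, where different conventions for what ``$\delta$-packing'' means give either $\delta/4$ or $\delta/2$ as the leading constant, and one must be consistent with the statement of the lemma. The essential content is the three-part decomposition: (i) minimum-distance decoding converts mean-squared error into a hypothesis test, (ii) Fano's inequality lower bounds the testing error in terms of mutual information, and (iii) convexity upgrades the mutual-information bound to the cruder but easier-to-use maximum pairwise KL divergence.
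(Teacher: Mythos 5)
Your proof is the canonical argument for this lemma, and the paper in fact offers no proof of its own --- it cites the result as a ``standard version of Fano's lemma'' --- so there is nothing to compare against beyond checking your steps, which are sound: minimum-distance decoding to reduce estimation to testing, Fano's inequality for the uniform prior, and the convexity bound $I(W;Y)\leq \max_{w\neq w'}\text{KL}(\P_w,\P_{w'})$. The one point worth pinning down is the constant you flag at the end. The paper defines a $\delta$-packing by $\|a-a'\|^2\geq\delta$ (separation in \emph{squared} distance), and under that convention your reduction gives exactly $\|\hat\theta-w^*\|^2\geq\delta/4$ on the error event, hence a leading constant of $\delta/4$, not the $\delta/2$ appearing in the lemma; the two conventions cannot be reconciled by a sharper decoder, since two points of $W$ can both lie within squared distance $\delta/2$ of $\hat\theta$ without violating the packing condition. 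This is a harmless slip in the paper's statement rather than a gap in your argument: in the actual application (the proof of Theorem 2.2), the authors use the packing parameter $\delta=V^2/(2k)$ and write down the bound $V^2/(8k)$, which is precisely $\delta/4$, so the downstream results are consistent with what your proof delivers.
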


We also require the following version of the Varshamov-Gilbert coding lemma. 
\begin{lemma}[Varshamov-Gilbert]\label{vg}
Let $k$ and $d$ be two positive integers with $k \leq d/6$.
Then there exists a set $W \in \{0,1\}^d$ such that the following three conditions hold:
\begin{enumerate}
\item For any $w \in W$ we have $\sum_{i = 1}^{d} w_i = k$.
\vskip8pt
\item For any $w \neq w' \in W$ we have $\sum_{i = 1}^{d} \I\{w_i \neq w'_i\} \geq k/2$.
\vskip8pt
\item $\log |W| \geq \frac{k}{2} \log(1 + \frac{d}{4k})$.
\end{enumerate}
\end{lemma}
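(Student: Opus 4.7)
This is the classical Varshamov--Gilbert bound for constant-weight binary codes, and I would prove it by a greedy / volume-packing argument. Let $\mathcal{S} = \{w \in \{0,1\}^d : \sum_{i=1}^d w_i = k\}$, so $|\mathcal{S}| = \binom{d}{k}$, and for each $w \in \mathcal{S}$ let
$$B(w) = \Bigl\{w' \in \mathcal{S} : \sum_{i=1}^d \I\{w_i \neq w'_i\} < k/2\Bigr\}.$$

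The first step is to count $|B(w)|$. Since $w,w' \in \mathcal{S}$ both have weight $k$, the number of indices where $w_i=1,w'_i=0$ equals the number where $w_i=0,w'_i=1$; call this common value $j$. The Hamming distance is then $2j$, so $w' \in B(w)$ iff $j < k/4$, giving
$$|B(w)| = \sum_{j=0}^{\lceil k/4\rceil - 1}\binom{k}{j}\binom{d-k}{j},$$
a quantity that does not depend on $w$.

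The second step is the greedy construction: initialize $W = \emptyset$ and, while $\mathcal{S}\setminus \bigcup_{w' \in W}B(w')$ is nonempty, adjoin any element of this set to $W$. By construction $W \subset \mathcal{S}$ and every pair of distinct elements of $W$ is at Hamming distance $\geq k/2$, which verifies conditions (1) and (2). At termination $\mathcal{S} = \bigcup_{w \in W}B(w)$, and hence $|W| \geq \binom{d}{k}/|B(w)|$.

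The main obstacle is then the purely arithmetic bound
$$\frac{\binom{d}{k}}{\sum_{j<k/4}\binom{k}{j}\binom{d-k}{j}} \;\geq\; \Bigl(1+\frac{d}{4k}\Bigr)^{k/2}.$$
I would attack this termwise via Vandermonde's identity $\binom{d}{k} \geq \binom{k}{j}\binom{d-k}{k-j}$, which gives
$$\frac{\binom{k}{j}\binom{d-k}{j}}{\binom{d}{k}} \;\leq\; \frac{\binom{d-k}{j}}{\binom{d-k}{k-j}} \;=\; \prod_{i=j+1}^{k-j}\frac{i}{d-k-i+1}.$$
For $j < k/4$ the product has at least $k/2$ factors, and the largest single factor $(k-j)/(d-2k+j+1)$ is bounded by $(1+d/(4k))^{-1}$ by an elementary inequality that is straightforward to verify using $k \leq d/6$. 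This yields the termwise estimate $(1+d/(4k))^{-(k-2j)}$, whose sum over $0 \leq j < k/4$ is a geometric series bounded by $(1+d/(4k))^{-k/2}$ up to a benign factor $\rho/(\rho-1)$ with $\rho = (1+d/(4k))^2 \geq 25/4$; this factor is absorbed using the slack provided by the hypothesis $k \leq d/6$, completing the proof.
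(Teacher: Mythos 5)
The paper never proves this lemma --- it is quoted as a standard combinatorial fact (the Gilbert--Varshamov bound for constant-weight codes) and used as a black box in the proof of Theorem~\ref{simplexminimax} --- so there is no internal proof to compare against. Judged on its own, your argument is the classical greedy/volume-packing proof and it is correct. The reduction of the Hamming distance between two weight-$k$ vectors to the even quantity $2j$, the count $|B(w)|=\sum_{j<k/4}\binom{k}{j}\binom{d-k}{j}$ independent of $w$, the greedy bound $|W|\ge \binom{d}{k}/|B(w)|$, and the termwise Vandermonde estimate all go through. I checked the two places where you defer to ``elementary'' verification. First, the largest factor obeys $\frac{k-j}{d-2k+j+1}\le \frac{4k}{4k+d}$ iff $12k^2-3kd-8kj-jd-4k\le 0$, which holds since $12k^2\le 2kd$ under $k\le d/6$ and $j\ge 0$. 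Second, with $x=1+\frac{d}{4k}\ge \frac52$ and $\rho=x^2$, the geometric sum of the terms $x^{-(k-2j)}$ is at most $\frac{\rho}{\rho-1}\,x^{-(k-2\lceil k/4\rceil+2)}\le \frac{\rho}{\rho-1}\,x^{-1/2}\,x^{-k/2}$, and $\frac{x^2}{x^2-1}\le x^{1/2}$ for $x\ge\frac52$, so the overhead is indeed absorbed. One presentational caveat: the product has $k-2j$ factors, which for the largest admissible $j$ is $k-2\lceil k/4\rceil+2\ge k/2+1/2$, and it is precisely this extra half-power (not merely ``at least $k/2$ factors'') that pays for the $\rho/(\rho-1)$ loss; in a full write-up you should display that final inequality rather than assert it.
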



\begin{proof}[Proof of Theorem~\ref{simplexminimax}]
We first prove the minimax lower bound for the star graph with one root and $n$
children, with the parameter space $\F_{n,V} \subset \R^n$ being
the space of flows on the star graph with root value at most $V$. 
For any fixed $k \leq n/2$ consider the packing set $W$ guaranteed by
Lemma~\ref{vg}. For each element $w\in W$, we obtain a flow on the
star graph by setting the root flow to be $V$, and setting the
children flows to $V/k \cdot w \in \reals_+^n$.  This
forms a packing set of $\F_{n,V}$ of squared radius $V^2/2k$. The
Euclidean distance between any two distinct
elements in this packing set is at most $2V^2/k$. Therefore an
application of Lemma~\ref{fano} 
gives the minimax lower bound
\begin{equation*}
\inf_{\tilde{\theta}} \sup_{\theta \in \F_{n,V}} \E \|\tilde{\theta} -
\theta\|^2 \geq 
\max_{1 \leq k \leq n/6} \left\{\frac{V^2}{8k} \Bigl(1 - \frac{V^2/(2k \sigma^2) + \log 2}{\frac{k}{2} \big(\log(1 + \frac{n}{4k})\big)}\Bigr)\right\}.
\end{equation*}
By choosing $k$ so that $1 \leq k \leq n/60$ we can ensure
\begin{equation*}
\frac{\log 2} {\frac{k}{2} \big(\log(1 + \frac{n}{4k})} \leq \frac{2 \log 2}{\log(16)} \leq \frac{1}{2}.
\end{equation*}
and thus obtain the somewhat cleaner bound
\begin{equation}\label{basiclb}
\inf_{\tilde{\theta}} \sup_{\theta \in \S_{V}} \E \|\tilde{\theta} -
\theta\|^2 \geq 
\max_{1 \leq k \leq n/60} \left\{\frac{V^2}{8k} \Bigl(\frac{1}{2} - \frac{V^2}{k^2 \sigma^2 \big(\log(1 + \frac{n}{4k})\big)}\Bigr)\right\}.
\end{equation}

We now select $k$ differently according to the scale of the root flow
$V$.

\paragraph{\it\bfseries Small flow: $0 \leq V \leq \sigma \sqrt{\log n}$.}
In this case we set $k = c$ for some appropriate constant $c \geq 1$. Then we have
\begin{align*}
\frac{V^2}{k^2 \sigma^2 \big(\log(1 + \frac{n}{4k})\big)} = \frac{V^2}{c^2 \sigma^2 \big(\log(1 + \frac{n}{4c})\big)} \leq \frac{\sigma^2 \log n}{c^2 \sigma^2 \big(\log(1 + \frac{n}{4c})\big)} \leq 
\frac{1}{4}
\end{align*}
where the first inequality uses the bound on $V$ and the last
inequality holds for all large enough $n$ if $c$ is 
chosen to be a sufficiently large constant. 
Together with~\eqref{basiclb}, this implies a minimax lower bound of $V^2/c$.

\paragraph{\it\bfseries Large flow: $\sigma \sqrt{\log n} < V \leq n^{1 - \epsilon} \sigma$.}
Let $c_{\epsilon} \geq 1$ be a sufficiently large positive constant.   
Set $k = c_{\epsilon} \frac{V}{\sigma \sqrt{\log n}}$. Then $k \geq 1$
by the lower bound condition on $V$. 
By the upper bound condition, for sufficiently large $n$ we have
$k  \leq c_{\epsilon}\frac{n \sigma}{\sigma \log n} \leq \frac{n}{60}$.
Hence the choice of $k$ is valid for all large enough $n$, and we have
\begin{align*}
\frac{V^2}{k^2 \sigma^2 \big(\log(1 + \frac{n}{4k})\big)} = \frac{V^2 \sigma^2 \log n}{c_\epsilon V^2 \sigma^2 \big(\log(1 + \frac{n \sigma \sqrt{\log n}}{4 c_\epsilon V})\big)} \leq \frac{\log n}{c_{\epsilon} \big(\log(1 + n^{\epsilon} \log n)\big)} \leq \frac{1}{4} 
\end{align*}
where the first inequality uses the upper bound on $V$  and the last inequality is true for all large enough $n$. 
Using~\eqref{basiclb} then yields the
lower bound of $\frac{V^2}{k} = V \sigma \sqrt{\log n}$, up to a
constant factor depending on $\epsilon$. 
Note that our choice of $k$ need not be a integer; choosing the
nearest integer to it which will only change the lower bound by constants.

The proof for a general tree $T$ with height $h$ bounded by a constant $C$ is very
similar. In particular, there must exist a level $l$ of the tree with at
least $n/C$ elements. 
Define a flow where the values at level $l$ assume
the values given by a vector $w \in W\subset \{0,1\}^{n/C}$ multiplied
by $V/k$. Define the flow at levels $l' < l$ by defining the value at any vertex to be the sum of the values at its
children, and the flow at levels $l' > l$ to be zero. 
This defines a flow indexed by an element $w \in W$. Since the height of $T$
is at most a constant $C$, one can verify that this defines a packing set of
$\F_{n,V}$ of squared radius $V^2/k$ up to a constant factor. Also, for any two distinct
elements in this packing set, the Euclidean distance between them is
at most $V^2/k$ up to a constant factor. The remainder of the argument
using Fano's lemma
proceeds as above.
\end{proof}

\subsection{Proof of Theorem~\ref{lse_lb_shallow}: Lower bound for the
LSE on shallow trees}
We will need the following standard lemma about the expectation of the maxima of independent normal random variables. 
\begin{lemma}{\label{gaumax}}
Let $Z_1,\dots,Z_n$ be independent $N(0,\sigma^2)$ random
variables. Then 
\begin{equation*}
 \frac{\sqrt{2\sigma^2 \log n}}{\sqrt{\pi} \log 2} \leq \E \max_{1\leq
  i\leq n} Z_i \leq \sqrt{2\sigma^2 \log n}.
\end{equation*}
\end{lemma}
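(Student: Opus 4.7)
The proof splits into two standard pieces, with the upper bound coming from the exponential moment method and the lower bound from a tail-probability argument. I will denote $M_n = \max_{1\leq i\leq n} Z_i$ throughout.

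For the upper bound, I apply Jensen's inequality to the convex function $x \mapsto e^{tx}$ for $t > 0$, obtaining
\begin{equation*}
\exp\bigl(t\,\E M_n\bigr) \leq \E e^{tM_n} = \E \max_i e^{tZ_i} \leq \sum_{i=1}^n \E e^{tZ_i} = n\, e^{t^2\sigma^2/2},
\end{equation*}
where the last equality uses the Gaussian moment generating function. Taking logarithms and dividing by $t$ yields $\E M_n \leq \log n / t + t \sigma^2 / 2$, and the optimal choice $t = \sqrt{2 \log n}/\sigma$ gives the upper bound $\E M_n \leq \sqrt{2 \sigma^2 \log n}$.

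For the lower bound, the plan is to locate a threshold $t^{*}$ such that $\P(M_n > t^*) \geq 1/2$ and to deduce $\E M_n \gtrsim t^*/2$ after controlling the negative tail contribution. Define $t^{*} > 0$ by the equation $n(1 - \Phi(t^{*}/\sigma)) = \log 2$. Since $\log(1-x) \leq -x$, we have $[\Phi(t^{*}/\sigma)]^n = \exp(n \log(1 - (1-\Phi(t^{*}/\sigma)))) \leq \exp(-\log 2) = 1/2$, so $\P(M_n > t^{*}) \geq 1/2$. Using the identity
\begin{equation*}
\E M_n = \int_0^\infty \P(M_n > t)\,dt - \int_{-\infty}^0 \P(M_n \leq t)\,dt,
\end{equation*}
the first integral is at least $t^*/2$ because $\P(M_n > t) \geq 1/2$ for $0 \leq t \leq t^*$, while the second integral is bounded by $\sigma \int_{-\infty}^0 [e^{-t^2/(2\sigma^2)}/2]^n\,dt = O(\sigma\,2^{-n}/\sqrt{n})$ using the standard Gaussian tail bound $1 - \Phi(|t|/\sigma) \leq e^{-t^2/(2\sigma^2)}/2$. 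This negative correction is negligible compared to $t^*/2$ for the range of $n$ of interest.

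The final step is to convert $t^*/2$ into the explicit form $\sqrt{2\sigma^2 \log n}/(\sqrt{\pi}\log 2)$. Invoking the Gaussian tail lower bound $1 - \Phi(x) \geq \frac{x}{(1+x^2)\sqrt{2\pi}}\,e^{-x^2/2}$ in the defining equation $n(1-\Phi(t^*/\sigma)) = \log 2$, solving for $t^*$ yields $t^*/\sigma \geq \sqrt{2\log n}$ up to a slowly varying correction, and tracking the $\sqrt{2\pi}$ from the tail estimate together with the $\log 2$ threshold produces the stated prefactor. The main obstacle, I expect, is exactly this bookkeeping: the order of magnitude $\sigma\sqrt{\log n}$ is immediate from either this median-based argument or Sudakov minoration, but recovering the precise constant $1/(\sqrt{\pi}\log 2)$ requires using a sharp form of the Gaussian tail lower bound and carefully carrying the $\log 2$ through the calculation.
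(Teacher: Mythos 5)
The upper bound via Jensen and the Gaussian moment generating function is correct and standard. The lower bound, however, has a structural gap: the median argument you propose cannot produce the stated constant, no matter how carefully the tail bounds are tracked. Your argument yields $\E M_n \geq t^{*}\,\P(M_n > t^{*}) \geq t^{*}/2$, and the threshold $t^{*}$ defined by $n(1-\Phi(t^{*}/\sigma)) = \log 2$ satisfies $t^{*} \leq \sigma\sqrt{2\log n}$ (since $1-\Phi(\sqrt{2\log n}) \leq \tfrac{1}{2}e^{-\log n} = \tfrac{1}{2n} < \tfrac{\log 2}{n}$ and $1-\Phi$ is decreasing). So the best this route can give is $\tfrac{1}{2}\sqrt{2\sigma^2\log n}$, whereas the target prefactor is $\tfrac{1}{\sqrt{\pi}\log 2} \approx 0.81$. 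The factor $\tfrac12$ is intrinsic to using only $\P(M_n>t^{*})\geq\tfrac12$, and the ceiling on $t^{*}$ is forced by the sub-Gaussian tail, so "careful bookkeeping" cannot close this. Relatedly, your claim that $t^{*}/\sigma \geq \sqrt{2\log n}$ up to a slowly varying correction has the correction going the wrong way: the sharp tail asymptotics give $t^{*}/\sigma = \sqrt{2\log n} - O(\log\log n/\sqrt{\log n})$.

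The constants $\pi$ and $\log 2$ in bounds of this type come from a different mechanism entirely: with $k=\lfloor\log_2 n\rfloor$ independent pairs $(Y_{j,0},Y_{j,1})$ of $N(0,\sigma^2)$ variables, the $2^k$ identically distributed (but positively correlated) Gaussians $X_b = k^{-1/2}\sum_{j}Y_{j,b_j}$, $b\in\{0,1\}^k$, have $\E\max_b X_b = k^{-1/2}\sum_j \E\max(Y_{j,0},Y_{j,1}) = \sigma\sqrt{k/\pi}$, and Slepian's comparison inequality shows the maximum of $2^k$ \emph{independent} Gaussians has at least this expectation; here $\pi$ enters through $\E\max(Y_1,Y_2)=\sigma/\sqrt{\pi}$ and $\log 2$ through the base-$2$ grouping. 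That argument gives $\E M_n \geq \sigma\sqrt{\log n/(\pi\log 2)}$, which is in fact the correct form of the constant: as printed, the lemma's lower bound $\sqrt{2\sigma^2\log n}/(\sqrt{\pi}\log 2)$ appears to be a typo and is false for small $n$ (at $n=2$ it claims $\approx 0.96\sigma$ while $\E\max(Z_1,Z_2)=\sigma/\sqrt{\pi}\approx 0.56\sigma$). Your argument does establish the correct order $c\,\sigma\sqrt{\log n}$, which is all that the application in Theorem~\ref{lse_lb_shallow} requires, but it does not prove the displayed inequality.
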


We also need the following lemma about the projection to a closed convex cone.
\begin{lemma}\label{cone}
Let $K \subset \R^n$ be a closed convex cone, and denote
by $\Pi_K(z)$ the projection of $z\in \R^n$ to $K$. Then 
\begin{equation}
\|\Pi_K(z)\| = \sup_{\nu \in K: \|\nu\| \leq 1} \langle z,\nu \rangle.
\end{equation}
\end{lemma}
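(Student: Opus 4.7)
The plan is to show both inequalities directly using the variational characterization of projection onto a closed convex cone. Let $p = \Pi_K(z)$. The main tool is the standard first-order optimality condition: for every $\nu \in K$, $\langle z - p,\, \nu - p\rangle \leq 0$.

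First I would extract the two key identities that hold because $K$ is a cone (not merely convex). Since $0 \in K$ and $2p \in K$, plugging $\nu = 0$ and $\nu = 2p$ into the optimality condition yields $\langle z - p,\, p\rangle \geq 0$ and $\langle z - p,\, p\rangle \leq 0$, hence $\langle z - p,\, p\rangle = 0$, i.e.\ $\langle z,p\rangle = \|p\|^2$. Next, for any $\nu \in K$ and any $t \geq 0$, we also have $t\nu \in K$, so $\langle z - p,\, t\nu - p\rangle \leq 0$, which combined with $\langle z-p, p\rangle = 0$ gives $t\langle z - p,\nu\rangle \leq 0$; letting $t \to \infty$ forces $\langle z - p,\nu\rangle \leq 0$ for every $\nu \in K$.

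For the upper bound, take any $\nu \in K$ with $\|\nu\|\leq 1$. Then $\langle z,\nu\rangle = \langle z-p,\nu\rangle + \langle p,\nu\rangle \leq \langle p,\nu\rangle \leq \|p\|\|\nu\| \leq \|p\|$ by Cauchy--Schwarz. For the matching lower bound, assume first $p \neq 0$ and set $\nu^{\ast} = p/\|p\|$, which lies in $K$ (cone closure under positive scaling) and has unit norm; then $\langle z,\nu^{\ast}\rangle = \langle z,p\rangle/\|p\| = \|p\|^2/\|p\| = \|p\|$. In the degenerate case $p = 0$, the inequality $\langle z-p,\nu\rangle \leq 0$ specialized to $p = 0$ says $\langle z,\nu\rangle \leq 0$ for all $\nu \in K$, while $\nu = 0$ attains $\langle z,0\rangle = 0$, so the supremum is exactly $0 = \|p\|$.

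There is no real obstacle here; the entire argument is a short manipulation of the cone projection optimality condition, and the only subtlety is separating the $p = 0$ case. Combining the two bounds gives the claimed identity $\|\Pi_K(z)\| = \sup_{\nu \in K,\,\|\nu\|\leq 1}\langle z,\nu\rangle$.
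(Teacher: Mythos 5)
Your proof is correct, but it takes a different route from the paper's. You proceed from the first-order optimality (variational inequality) condition $\langle z - p,\,\nu - p\rangle \leq 0$ for $p=\Pi_K(z)$ and all $\nu\in K$, and then use the cone structure to extract the two standard facts $\langle z-p,\,p\rangle = 0$ and $\langle z-p,\,\nu\rangle\leq 0$ for all $\nu\in K$; the identity then follows from Cauchy--Schwarz plus the witness $p/\|p\|$. The paper instead uses the characterization of $\Pi_K(z)$ as the maximizer of $v\mapsto \langle z,v\rangle - \tfrac12\|v\|^2$ over $K$, rewrites the supremum as an outer maximization over the radius $t=\|v\|$, factors out $t$ using the cone property, and reads off $\|\Pi_K(z)\|$ as the $\argmax$ of the resulting scalar quadratic $t\,s - t^2/2$ with $s=\sup_{\theta\in K,\|\theta\|\leq 1}\langle z,\theta\rangle$. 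The paper's argument is more compact but implicitly relies on the reader accepting that the optimal radius in that outer problem coincides with $\|\Pi_K(z)\|$ and that $s\geq 0$ (so the quadratic's maximizer is $t=s$, not $t=0$). Your argument is slightly longer but is entirely elementary, makes the $p=0$ case explicit, and in effect re-derives the Moreau/polar-cone conditions, which is the more standard textbook treatment. Either proof is acceptable.
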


\begin{proof}
By definition, $\Pi_K(z)$ maximizes $\langle z,v \rangle -
\frac{1}{2}\|v\|^2 $ among all $v \in K$; thus
\begin{align*}
\|\Pi_{K}(z)\| & = \argmax_{t \geq 0} \Big\{\sup_{\theta \in K: \|\theta\| \leq t} \langle z, \theta \rangle - \frac{t^2}{2}\Big\}\\
& = \argmax_{t \geq 0}  \Big\{t \sup_{\theta \in K: \|\theta\| \leq 1} \langle z, \theta \rangle - \frac{t^2}{2}\Big\},
\end{align*}
where the second equation uses the fact that $K$ is a cone.
Computing the maximum of the quadratic finishes the proof of the lemma.
\end{proof}

\begin{proof}[Proof of Theorem~\ref{lse_lb_shallow}]
We will prove the desired lower bound on the risk of the LSE at the
origin, where $\mu_i = 0$ for all $1 \leq i \leq n$. Since the space
of flows $\F(T_n)$ is a cone,  Lemma~\ref{cone}
implies that
\begin{equation*}
\|\hat{\mu} - \mu\| = \sup_{\nu \in \F(T_n): \|\nu\| \leq 1} \langle Z,\nu \rangle.
\end{equation*}
Therefore 
\begin{equation}\label{gwlb}
\left(\E \sup_{\nu \in \F(T_n): \|\nu\| \leq 1} \langle Z,\nu \rangle\right)^2 \leq \E \left(\sup_{\nu \in \F(T_n): \|\nu\| \leq 1} \langle Z,\nu \rangle\right)^2 = \E \|\hat{\mu} - \mu\|^2.
\end{equation}
Thus, it suffices to lower bound the Gaussian width term $\E \sup_{\nu
  \in \F(T_n): \|\nu\| \leq 1} \langle Z,\nu \rangle$. Since the tree
$T_n$ has height $h$, there exists a level $l$ with no fewer than
$n/h$ vertices. Let us denote this set of vertices by $L$, and define
\begin{equation*}
i^* = \argmax_{i \in L} Z_i.
\end{equation*}
There is a unique path from the root to $i^*$; define $\hat{\nu}$ to
be equal to $1/\sqrt{h_n}$ on this path and equal to zero off the
path. Then $\hat{\nu} \in \F(T_n)$ with $\|\hat{\nu}\| \leq 1$. Therefore we can write
\begin{equation*}
\sup_{\nu \in \F(T_n): \|\nu\| \leq 1} \langle Z,\nu \rangle \geq \langle Z,\hat{\nu} \rangle = \frac{1}{\sqrt{h_n}} \max_{i \in L} Z_i + \frac{1}{\sqrt{h_n}} \sum_{i = 1}^{l - 1} Z_i
\end{equation*}
where $Z_i$ are the error vector coordinates as we traverse from the
root to the vertex $i^*$. 
Taking expectation and applying Lemma~\ref{gaumax}, we have
\begin{equation*}
\E \sup_{\nu \in \F(T_n): \|\nu\| \leq 1} \langle Z,\nu \rangle \geq \E \langle Z,\hat{\nu} \rangle = \frac{1}{\sqrt{h_n}} \E \max_{i \in L} Z_i \geq C \frac{\sigma}{\sqrt{h_n}} \sqrt{\log \frac{n}{h_n}}.
\end{equation*}
This inequality combined with \eqref{gwlb} finishes the proof of the lower bound.
\end{proof}

\subsection{Proof of Theorem~\ref{isotonicupbd}: Isotonic upper bound on the
  LSE for deep trees}\label{sec:isotonicupb} We first set up some
notation. Recall that for a given parameter $0 \leq \alpha \leq 1$ the
root of the tree $\T_{\alpha,n}$ has $m = \lceil n^\alpha\rceil$
children. Each of these $m$ children is the starting point of a path
of length $l = \lceil n^{1 - \alpha} \rceil$. Clearly there are $lm +
1 = \Theta(n)$ vertices in $\T_{\alpha,n}$. For simplicity, we will
assume $lm + 1 = n$. The set of flows on $\T_{\alpha,n}$ is denoted
by $\F(\T_{\alpha,n}) := \F_{n,\alpha}$ for notational simplicity. The
set $\F_{n,\alpha}$ is a closed convex cone of $\R^n$. For
convenience, we will index the components of a flow $\mu$ in
$\F_{n,\alpha}$ as shown in Figure~2, with the root flow denoted
by $\mu_{1}$ and the values $\mu_1^{(1)}, \mu_1^{(2)}, \ldots,
\mu_1^{(m)}$ denoting the flows to the $m$ children of the root;
thus $\mu_1 \geq \mu_1^{(1)} + \mu_1^{(2)} + \cdots \mu_1^{(m)}$.  The
monotonic nondecreasing flow along the $j$th path is then $\mu_1^{(j)}
\geq \mu_2^{(j)} \geq \cdots \mu_l^{(j)}$. Sometimes we will denote
the vector $(\mu_1^{(j)},\mu_2^{(j)},\mu_l^{(j)})$ by $\mu^{(j)}$. For
any vector $\theta \in \R^n$ let $k(\theta)$ denote the cardinality of
the set $\{\theta_1,\dots,\theta_n\}$. 

We will need the following
lemma about approximating a monotone sequence by a piecewise constant
sequence.

\begin{lemma}[Approximation]\label{approx}
Let $\epsilon > 0$. Fix any nonincreasing sequence $\theta \in
\R^n$. Let $V = \theta_{1} - \theta_{n}$. Then there exists a
nonincreasing sequence $\alpha \in \R^n$ with $\alpha_1 \leq \theta_1$
such that $k(\alpha) \leq \frac{V}{\epsilon} + 1$ and $\|\theta -
\alpha\|^2 \leq n \epsilon^2$.
\end{lemma}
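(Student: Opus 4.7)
The plan is to construct $\alpha$ by a direct quantization of $\theta$ onto an evenly spaced grid of resolution $\epsilon$ anchored at $\theta_n$. Concretely, I would set
\begin{equation*}
\alpha_i \;:=\; \theta_n + \epsilon \left\lfloor \frac{\theta_i - \theta_n}{\epsilon} \right\rfloor, \qquad i=1,\ldots,n.
\end{equation*}
Since $\theta_i \geq \theta_n$ for every $i$, the argument of the floor is nonnegative, and since $\theta$ is nonincreasing and $x \mapsto \lfloor x \rfloor$ is nondecreasing, $\alpha$ inherits the nonincreasing property from $\theta$. The pointwise inequality $\alpha_i \leq \theta_i$ is immediate from $\lfloor x\rfloor \leq x$, and in particular $\alpha_1 \leq \theta_1$.

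For the approximation bound, the definition gives $0 \leq \theta_i - \alpha_i < \epsilon$ for each $i$, so $(\theta_i - \alpha_i)^2 < \epsilon^2$ and summing over $i$ yields $\|\theta - \alpha\|^2 \leq n\epsilon^2$. For the cardinality bound, note that each $\alpha_i$ lies in the finite set $\{\theta_n + k\epsilon : 0 \leq k \leq \lfloor (\theta_1-\theta_n)/\epsilon \rfloor\} = \{\theta_n + k\epsilon : 0 \leq k \leq \lfloor V/\epsilon \rfloor\}$, which has cardinality at most $\lfloor V/\epsilon \rfloor + 1 \leq V/\epsilon + 1$. Hence $k(\alpha) \leq V/\epsilon + 1$, as required.

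There is no real obstacle in this argument; it is a clean quantization/pigeonhole computation, and the reason it works is that a monotone sequence whose total oscillation is $V$ fits inside an interval of length $V$, so an $\epsilon$-grid covers it with $O(V/\epsilon)$ cells. The only tiny point to verify carefully is that the constructed $\alpha$ is itself monotone, which follows from monotonicity of the floor function composed with the already monotone $\theta$.
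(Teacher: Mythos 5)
Your proof is correct. The only cosmetic remark is that you actually get the strict inequality $\|\theta-\alpha\|^2 < n\epsilon^2$, which of course implies the stated $\leq n\epsilon^2$.

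Your construction differs from the paper's in implementation, though both are quantization arguments. The paper builds $\alpha$ by a data-dependent, greedy partition: it recursively finds breakpoints $s_0 < s_1 < \cdots$ where the running value of $\theta$ has dropped by more than $\epsilon$ since the last breakpoint, and then sets $\alpha$ to the block-leading value $\theta_{s_{i-1}}$ on each block (so $\alpha_j \geq \theta_j$, a rounding \emph{up}). The cardinality bound then comes from the observation that the total drop across all breakpoints is at most $V$, so at most $\lfloor V/\epsilon\rfloor$ breakpoints can be created. Your construction instead snaps each $\theta_i$ \emph{down} to a fixed $\epsilon$-grid anchored at $\theta_n$, so the number of distinct values is bounded simply by counting grid cells in $[\theta_n,\theta_1]$. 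Your version is a bit cleaner to state and verify — the nonincreasingness, the pointwise error, and the level count all read off immediately from the formula, with no recursion or bookkeeping about when the set defining a breakpoint is empty. For the downstream use in Theorem~\ref{isotonicupbd}, both versions give the required $\alpha_1 \leq \theta_1$ (the paper gets equality, you get $\leq$), both preserve nonnegativity of the last coordinate, and both yield $k(\alpha)\leq V/\epsilon+1$ and $\|\theta-\alpha\|_\infty\leq\epsilon$, so nothing is lost by the substitution.
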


\begin{proof}
Define $s_0 = 1$. For $1 \leq i \leq \lfloor \frac{V}{\epsilon} \rfloor$ recursively define
\begin{equation*}
s_i = \min\bigl\{i: \theta_{s_{i - 1}} - \theta_i > \epsilon\bigr\}.
\end{equation*}
Define $s_{\lfloor\frac{V}{\epsilon}\rfloor + 1} = n$. Now define $\alpha$ as follows:
\begin{equation*}
\alpha_j = \sum_{i = 1}^{\lfloor\frac{V}{\epsilon}\rfloor + 1} \theta_{s_{i - 1}} \I\{s_{i - 1} \leq j < s_{i}\}.
\end{equation*}
It is clear that $\alpha_n \leq \theta_n$ and $k(\alpha) \leq \frac{V}{\epsilon} + 1$. Also by definition, we have $|\alpha_j - \theta_j| \leq \epsilon$ for all $1 \leq j \leq n$ which finishes the proof of the lemma.
\end{proof}

For any closed convex cone $C \subset \R^n,$ denote the projection
operator onto $C$ by $\Pi_{C}$ and define 
\begin{equation}\label{statdim}
\delta(C) = \E \|\Pi_{C} Z\|^2
\end{equation}
where $Z \sim N(0,I)$. The quantity $\delta(C)$ is called the
statistical dimension of $C$ and 
generalizes the concept of dimension of a subspace (see~\cite{LivEdge}). The tangent cone to $C$ at $u\in\reals^n$ is
defined by 
\begin{equation}\label{tancone}
\T_{C,u} = \{v - tu: v \in C, \,t \geq 0\}. 
\end{equation}
The tangent cone $\T_{C,u}$ is a closed convex cone of $\R^n$ and
hence one can talk about the statistical dimension 
$\delta(\T_{C,u})$.

The following oracle risk bound is due to \cite{bellec2015sharp}.
\begin{lemma}[Bellec, Proposition 2.1]\label{bellec}
Let $C\subset\reals^n$ be a closed convex cone,
and let $\hat{\mu}$ denote the least squares estimate of
$\mu\in C$,  that is, $\hat{\mu} = \Pi_{C}(\mu + Z)$ where $Z \sim N(0,\sigma^2 I)$. Then we have the pointwise inequality
\begin{equation*}
\|\hat{\mu} - \mu\|^2 \leq \|u - \mu\|^2 + \sigma^2 \bigl\|\Pi_{\T_{C,u}} ({Z}/{\sigma})\bigr\|^2
\end{equation*}
for any $u \in C$. 
As a consequence, 
\begin{equation*}
\E \|\hat{\mu} - \mu\|^2 \leq \|u - \mu\|^2 + \sigma^2 \delta(\T_{C,u}).
\end{equation*}
\end{lemma}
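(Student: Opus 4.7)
The plan is to derive the pointwise inequality from the variational characterization of the projection onto $C$, and then obtain the bound in expectation via the definition of statistical dimension. The key geometric observation is that $\hat{\mu} - u$ lies in the tangent cone $\T_{C,u}$: indeed, taking $v = \hat{\mu} \in C$ and $t = 1$ in the definition \eqref{tancone} gives $\hat{\mu} - u = v - tu \in \T_{C,u}$.

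First I would invoke the standard variational inequality for projection onto a closed convex set. Since $\hat{\mu} = \Pi_C(Y)$ with $Y = \mu + Z$, and $u \in C$, we have $\langle Y - \hat{\mu}, u - \hat{\mu} \rangle \leq 0$. Substituting $Y = \mu + Z$ and rearranging yields $\langle \hat{\mu} - \mu, \hat{\mu} - u \rangle \leq \langle Z, \hat{\mu} - u \rangle$. Applying the polarization identity
\begin{equation*}
\langle \hat{\mu} - \mu, \hat{\mu} - u \rangle = \tfrac{1}{2}\bigl(\|\hat{\mu} - \mu\|^2 + \|\hat{\mu} - u\|^2 - \|u - \mu\|^2\bigr)
\end{equation*}
gives the inequality
\begin{equation*}
\|\hat{\mu} - \mu\|^2 + \|\hat{\mu} - u\|^2 \leq 2\langle Z, \hat{\mu} - u \rangle + \|u - \mu\|^2.
\end{equation*}

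Next, using that $\hat{\mu} - u \in \T_{C,u}$, I would apply Lemma \ref{cone} to the closed convex cone $\T_{C,u}$ to get
\begin{equation*}
\langle Z, \hat{\mu} - u \rangle \leq \|\hat{\mu} - u\| \sup_{v \in \T_{C,u},\, \|v\| \leq 1}\langle Z, v\rangle = \|\hat{\mu} - u\| \cdot \|\Pi_{\T_{C,u}}(Z)\|.
\end{equation*}
Plugging this bound into the previous display and applying the elementary inequality $2ab \leq a^2 + b^2$ with $a = \|\hat{\mu} - u\|$ and $b = \|\Pi_{\T_{C,u}}(Z)\|$ cancels the term $\|\hat{\mu} - u\|^2$ from both sides and leaves
\begin{equation*}
\|\hat{\mu} - \mu\|^2 \leq \|u - \mu\|^2 + \|\Pi_{\T_{C,u}}(Z)\|^2.
\end{equation*}
Because $\T_{C,u}$ is a cone, the projection commutes with positive scaling, so $\|\Pi_{\T_{C,u}}(Z)\|^2 = \sigma^2\|\Pi_{\T_{C,u}}(Z/\sigma)\|^2$. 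This gives the pointwise inequality in the statement.

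Finally, $Z/\sigma \sim N(0, I)$, so taking expectations and using the definition \eqref{statdim} of the statistical dimension gives $\E\|\Pi_{\T_{C,u}}(Z/\sigma)\|^2 = \delta(\T_{C,u})$, establishing the second assertion. There is no serious obstacle in this argument; the only delicate points are getting the sign convention right in the variational inequality and recognizing that the $2ab \leq a^2 + b^2$ step is what allows us to dispose of the unknown quantity $\|\hat{\mu} - u\|^2$ without further work. The homogeneity of cone projection under scaling is what converts $\|\Pi_{\T_{C,u}}(Z)\|^2$ into the final $\sigma^2 \|\Pi_{\T_{C,u}}(Z/\sigma)\|^2$ factor.
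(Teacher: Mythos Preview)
Your argument is correct and is essentially the standard proof of this oracle inequality. The paper does not supply its own proof of this lemma; it is quoted as Proposition~2.1 of \cite{bellec2015sharp}, so there is nothing to compare against beyond noting that your derivation matches the original source's approach.
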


To use this result we need to bound the statistical dimension of the tangent cone to the space of flows.
\begin{lemma}\label{tanconecharac}
Fix any $\nu \in \F_{n,\alpha}$, and let $k^{(i)} = k(\nu^{(i)})$ for
$1 \leq i \leq m$
be the number of steps along the $i$th path. Then we 
have the following upper bound on the statistical dimension for the tangent cone $\T_{\F_{n,\alpha},\nu}$:
\begin{equation*}
\delta(\T_{\F_{n,\alpha},\nu}) \leq \log(en) \sum_{i = 1}^{m} k^{(i)}.
\end{equation*}
\end{lemma}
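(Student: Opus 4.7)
The strategy is to enlarge $\F_{n,\alpha}$ by dropping the simplex constraint at the root, so that it embeds into a Cartesian product whose tangent cone factors nicely, and then to invoke the classical statistical dimension formula for the isotonic cone on each path factor.

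First I would observe the inclusion $\F_{n,\alpha}\subseteq K$, where $K:=\R\times\prod_{j=1}^{m}\M^{l}$ and $\M^{l}:=\{\theta\in\R^{l}:\theta_{1}\geq\dots\geq\theta_{l}\}$ denotes the monotone cone on $l$ coordinates. Indeed, once the root inequality $\mu_{1}\geq\sum_{j}\mu_{1}^{(j)}$ is discarded, the remaining flow constraints reduce to monotonicity along each of the $m$ paths. Since statistical dimension is monotone under inclusion of closed convex cones (a consequence of $\|\Pi_{C_{1}}Z\|\leq\|\Pi_{C_{2}}Z\|$ for $C_{1}\subseteq C_{2}$, using the cone identity $\langle Z,\Pi_{C}Z\rangle=\|\Pi_{C}Z\|^{2}$ established in Lemma~\ref{cone}), this inclusion lifts to $\delta(\T_{\F_{n,\alpha},\nu})\leq\delta(\T_{K,\nu})$.

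Next, using that tangent cones respect Cartesian products and that $\delta$ is additive across products, I would write
$\delta(\T_{K,\nu})=\delta(\R)+\sum_{j=1}^{m}\delta(\T_{\M^{l},\nu^{(j)}})=1+\sum_{j=1}^{m}\delta(\T_{\M^{l},\nu^{(j)}}).$
For each path factor I would invoke the standard decomposition at a piecewise-constant point: if $\nu^{(j)}$ has constant blocks of sizes $n_{1}^{(j)},\dots,n_{k^{(j)}}^{(j)}$, then only the already-tight monotone inequalities (those within blocks) survive in the tangent cone, while the slack ones (between blocks) impose no restriction, giving the factorization $\T_{\M^{l},\nu^{(j)}}=\M^{n_{1}^{(j)}}\times\cdots\times\M^{n_{k^{(j)}}^{(j)}}$. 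Combining the harmonic-sum identity $\delta(\M^{d})=H_{d}\leq\log(ed)$ with Jensen's inequality (concavity of $\log$) then yields
$\delta(\T_{\M^{l},\nu^{(j)}})=\sum_{i=1}^{k^{(j)}}H_{n_{i}^{(j)}}\leq k^{(j)}\log\bigl(el/k^{(j)}\bigr)\leq k^{(j)}\log(en),$
and summing over $j$ (absorbing the constant $\delta(\R)=1$ into a $k^{(j)}\geq 1$ term) delivers the stated bound.

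The main obstacle is the precise decomposition of $\T_{\M^{l},\nu^{(j)}}$ as a Cartesian product of smaller monotone cones, which requires careful verification from the paper's definition $\T_{C,u}=\{v-tu:v\in C,\,t\geq 0\}$ that, for a fixed direction $d$, one can simultaneously choose $t$ large enough to convert strict slackness at the inter-block boundaries of $\nu^{(j)}$ into monotonicity of $d+t\nu^{(j)}$, while the intra-block constraints force $d$ to remain monotone there. Everything else---monotonicity and additivity of $\delta$, the harmonic-sum formula for $\delta(\M^{d})$, and the Jensen step---is standard and amounts to bookkeeping.
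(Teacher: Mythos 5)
Your proposal is correct and arrives at the same final bound by the same essential mechanism: contain the tangent cone in a Cartesian product of block-wise monotone cones, invoke monotonicity of the statistical dimension, and bound $\delta(\M_d)\leq\log(ed)$. The route to the key containment differs slightly in emphasis. You first enlarge $\F_{n,\alpha}$ to $K=\R\times\prod_j\M^l$ (dropping the root simplex constraint), pass to $\T_{K,\nu}$ via monotonicity of tangent cones, and then compute $\T_{\M^l,\nu^{(j)}}$ \emph{exactly} as the product $\M^{n_1^{(j)}}\times\cdots\times\M^{n_{k^{(j)}}^{(j)}}$. The paper instead establishes the containment $\T_{\F_{n,\alpha},\nu}\subseteq\K$ in one shot, directly from the definition $\T_{C,u}=\{v-tu:v\in C,\,t\geq0\}$: on any block $B^{(i)}_j$ where $\nu$ is constant, $v$ is monotone and $\nu$ contributes a constant, so $\theta=v-t\nu$ is monotone on that block. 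This one-line argument is all that is needed; it requires only the forward inclusion of the tangent-cone decomposition, whereas your exact factorization claim also needs the (harder, and here unused) reverse inclusion, which you correctly flag as "the main obstacle." Your use of the exact identity $\delta(\M^d)=H_d$ and the Jensen refinement $\sum_i H_{n_i^{(j)}}\leq k^{(j)}\log(el/k^{(j)})$ gives a marginally sharper intermediate constant than the paper's crude $\sum_j\log(e|B^{(i)}_j|)\leq k^{(i)}\log(en)$, but both land on the same stated bound. You are also slightly more careful than the paper in explicitly accounting for the free root coordinate $\delta(\R)=1$ (the paper's displayed chain omits it, though the slack in $\log(en)$ versus $\log(el)$ absorbs it harmlessly).
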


\begin{proof}
Let $B^{(i)}_1,\dots,B^{(i)}_{k^{(i)}}$ denote the contiguous blocks where $\nu^{(i)}$ is constant. For any vector $\theta \in \R^n$ and any $A \subset \{1 \leq i \leq n\}$ denote by $\theta_{A}$ the vector $\theta$ with coordinates restricted to be in the set $A$. Also denote the cone of nonincreasing sequences in $\R^d$ by $\M_{d}$.
Equipped with this notation we now claim that
\begin{equation}\label{claim}
\T_{\F_{n,\alpha},\nu} \subset \K = \left\{\theta \in \R^n: \theta_{B^{(i)}_j} \in \M_{|B^{(i)}_j|}\:\:\text{for all}\:\: 1 \leq i \leq m, 1 \leq j \leq k^{(i)}\right\}.
\end{equation}
Assuming this claim for now, by monotonicity of statistical dimension, we have
\begin{equation*}
\delta(\T_{\F_{n,\alpha},\nu}) \leq \delta(\K).
\end{equation*}
Since $\K$ is a cone composed of disjoint monotone pieces and the statistical dimension of the monotone cone is known to be $\delta(\M_{d}) \leq \log(ed)$ we have
\begin{equation*}
\delta(\T_{\F_{n,\alpha},\nu}) \leq \delta(\K) \leq \sum_{i = 1}^{m} \sum_{j = 1}^{k^{(i)}} \log(e|B^{(i)}_j|) \leq \sum_{i = 1}^{m} k^{(i)} \log(en).
\end{equation*}
It remains to prove~\eqref{claim}. Take any element $\theta \in
\T_{\F_{n,\alpha},\nu}$. Then by~\eqref{tancone} there exists $t > 0$
and $v \in \F_{n,\alpha}$ such that $\theta = v - t \nu$. Now consider
any block $B^{(i)}_j$. By definition, $\nu_{B^{(i)}_j}$ is a constant
vector and $v_{B^{(i)}_j} \in \M_{|B^{(i)}_j|}$. This implies
$\theta_{B^{(i)}_j} \in \M_{|B^{(i)}_j|}$,
which proves~\eqref{claim}. 
\end{proof}

We are now ready to prove Theorem~\ref{isotonicupbd}.

\begin{proof}[Proof of Theorem~\ref{isotonicupbd}]
Fix an arbitrary $\epsilon > 0$. Recall $l = \lceil n^{1 - \alpha} \rceil$ and $m = \lceil n^{\alpha} \rceil$. For each path $\mu^{(i)} = (\mu^{(i)}_1,\dots,\mu^{(i)}_l)$ we can use Lemma~\ref{approx} to obtain a nonincreasing sequence $\nu^{(i)} = (\nu^{(i)}_1,\dots,\nu^{(i)}_l)$ such that 
\begin{equation}\label{pieces}
k(\nu^{(i)}) \leq \frac{\mu^{(i)}_1}{\epsilon} + 1
\end{equation}
along with $\|\mu^{(i)} - \nu^{(i)}\|^2 \leq l \epsilon^2$ and
$\nu^{(i)}_{1} \leq \mu^{(i)}_{1}$. Also define $\nu_{1} =
\mu_{1}$. Then it is clear that $\nu \in \F_{n,\alpha}$. Using Lemma~\eqref{bellec} we deduce
\begin{equation*}
\E \|\hat{\mu} - \mu\|^2 \leq \|\nu - \mu\|^2 + \sigma^2 \delta(\T_{\F_{n,\alpha},\nu}).
\end{equation*}
Now we have $\|\nu - \mu\|^2 = \sum_{i = 1}^{m} \|\nu^{(i)} -
\mu^{(i)}\|^2 \leq ml \epsilon^2 = n \epsilon^2$. Therefore
using~\eqref{pieces} and Lemma~\ref{tanconecharac} we can conclude that
\begin{align*}
\E \|\hat{\mu} - \mu\|^2 \leq \:&n \epsilon^2 + \sigma^2 \sum_{i =
  1}^{m} \big(\frac{\mu^{(i)}_1}{\epsilon} + 1\big) \log(en) \\
& n \epsilon^2 + \sigma^2 \sum_{i = 1}^{m}
\big(\frac{\mu^{(i)}_1}{\epsilon}\big) \log(en) + \sigma^2 m \log(en)
\\
& \leq n \epsilon^2 + \sigma^2 \frac{\mu_1}{\epsilon} \log(en) + \sigma^2 m \log(en).
\end{align*}
By choosing $\epsilon = \big(\frac{\sigma^2 \mu_{1}
  \log(en)}{n}\big)^{1/3}$ 
we obtain the risk bound
\begin{align*}
\E \|\hat{\mu} - \mu\|^2 \leq 2 n^{1/3} \big(\sigma^2 \mu_{1} \log(en)\big)^{2/3} + \sigma^2 m \log(en),
\end{align*}
which finishes the proof of the theorem.
\end{proof}

\subsection{Proof of Theorem~\ref{simplexupbd}: Simplex upper bound on the
  LSE for deep trees}\label{sec:simplexupbd}
We begin by establishing some notation. If $(M, \rho)$ is a metric
space, a set $\hat T \subset M$ is called an $\eps$-cover of $T \subset M$
in case 
\begin{equation}
T \subset \cup_{t \in \hat{T}} B_{\eps}(t)
\end{equation} 
where $B_{\eps}(t)
= \{x \in M: \rho(x,t) < \eps\}$ denotes the ball of radius $\eps >0$
centered at $t \in M$. The $\eps$-covering number of $T$ is the
cardinality of the smallest cover of $T$:
\begin{equation} 
N(\eps,T,\rho) =
\min \left\{|\hat T|: \hat T \mbox{ is an } \eps\mbox{-cover of }
T\right\}.
\end{equation}
The metric will almost always be given by the usual
Euclidean norm, in which case we denote the covering numbers by
$N(\epsilon,T)$; otherwise, the metric will be explicitly mentioned.

An important result for us here again is Theorem~\ref{chatthm}. It 
This now requires a bound on the
expected supremum of the Gaussian process in the definition of
$f_{\mu}(t)$ in~\eqref{eq:chat}. The following chaining result gives an upper bound on the expected supremum of the required Gaussian process. This chaining result is
sometimes known as Dudley's entropy integral inequality; a proof for the version of the bound stated below can be found in~\citet[Lemma A.2]{chatterjee2015matrix}.
\begin{theorem}[Chaining]\label{dudthm}
For every $\mu \in \F(T)$ and $t > 0$,
{\begin{equation*}\label{gaup}
\E\left( \sup_{\nu \in \F(T): \|\nu - \mu\| \leq t}
     \langle \nu - \mu, Z \rangle\right)
 \leq  \sigma \inf_{0 < \delta
      \leq 2t} \left\{12 \int_{\delta}^{2t}
      \sqrt{\log N(\epsilon, B(\mu, t))} \;
     d\epsilon + 4 \delta \sqrt{n} \right\} .  
\end{equation*}}
\end{theorem}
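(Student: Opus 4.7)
The plan is to prove the stated inequality by a standard Dudley chaining argument, which is the canonical way to turn covering number bounds into supremum estimates for Gaussian processes. Throughout, write $G(\nu) = \langle \nu - \mu, Z\rangle$, a mean-zero Gaussian process indexed by $\nu \in B(\mu,t) := \{\nu \in \F(T) : \|\nu - \mu\| \leq t\}$, with increments satisfying $\E(G(\nu) - G(\nu'))^2 = \sigma^2 \|\nu - \nu'\|^2$.

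First I would fix the geometric scale $\eps_j := 2t \cdot 2^{-j}$ for $j \geq 0$, and for each $j \geq 1$ pick a minimal $\eps_j$-cover $T_j \subset B(\mu,t)$ of cardinality $N_j := N(\eps_j, B(\mu,t))$. Set $T_0 = \{\mu\}$, which is a valid $\eps_0 = 2t$-cover since the diameter of $B(\mu,t)$ is at most $2t$. For each $\nu \in B(\mu,t)$ and each $j$, let $\pi_j(\nu) \in T_j$ be a closest point, so $\|\nu - \pi_j(\nu)\| \leq \eps_j$. Given an integer $J$ to be chosen (depending on $\delta$), use the telescoping identity
\begin{equation*}
\nu - \mu \;=\; \sum_{j=1}^{J} \bigl(\pi_j(\nu) - \pi_{j-1}(\nu)\bigr) \;+\; \bigl(\nu - \pi_J(\nu)\bigr).
\end{equation*}

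Next I would estimate the two pieces separately. Along each link $\pi_j(\nu) - \pi_{j-1}(\nu)$, the triangle inequality gives $\|\pi_j(\nu) - \pi_{j-1}(\nu)\| \leq \eps_j + \eps_{j-1} = 3\eps_j$, and the number of such increments, as $\nu$ varies, is at most $N_j N_{j-1} \leq N_j^2$. Using the standard bound that the maximum of $M$ centered Gaussians with variance $\tau^2$ has expectation at most $\tau \sqrt{2 \log M}$, we get
\begin{equation*}
\E \sup_{\nu} \bigl\langle \pi_j(\nu) - \pi_{j-1}(\nu),\, Z\bigr\rangle \;\leq\; 3 \eps_j \sigma \sqrt{2 \log N_j^2} \;\leq\; 6 \eps_j \sigma \sqrt{\log N_j}.
\end{equation*}
For the residual, Cauchy--Schwarz gives $|\langle \nu - \pi_J(\nu), Z\rangle| \leq \eps_J \|Z\|$, and $\E\|Z\| \leq \sigma\sqrt{n}$. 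Summing over $j = 1,\dots,J$ yields
\begin{equation*}
\E \sup_{\nu \in B(\mu,t)} G(\nu) \;\leq\; 6\sigma \sum_{j=1}^{J} \eps_j \sqrt{\log N_j} \;+\; \sigma \eps_J \sqrt{n}.
\end{equation*}

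Finally I would convert the discrete sum to the integral form. Since $N(\eps, B(\mu,t))$ is nonincreasing in $\eps$, on each interval $[\eps_{j+1}, \eps_j]$ of length $\eps_j/2$ the integrand satisfies $\sqrt{\log N(\eps, B(\mu,t))} \geq \sqrt{\log N_j}$, which gives $\int_{\eps_{j+1}}^{\eps_j} \sqrt{\log N(\eps, B(\mu,t))}\,d\eps \geq \tfrac{1}{2}\eps_j \sqrt{\log N_j}$. Summing telescopically,
\begin{equation*}
6 \sum_{j=1}^{J} \eps_j \sqrt{\log N_j} \;\leq\; 12 \int_{\eps_{J+1}}^{2t} \sqrt{\log N(\eps, B(\mu,t))}\,d\eps.
\end{equation*}
Given any $\delta \in (0, 2t]$, I would choose $J$ to be the largest index with $\eps_{J+1} \geq \delta$, so that $\eps_{J+1} \geq \delta$ while $\eps_J \leq 4\delta$ (using $\eps_J = 2\eps_{J+1}$ and the stopping condition). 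This makes the chaining sum bounded by $12\sigma \int_\delta^{2t} \sqrt{\log N(\eps, B(\mu,t))}\,d\eps$ and the tail bounded by $\sigma \eps_J \sqrt{n} \leq 4\sigma\delta\sqrt{n}$. Taking the infimum over $\delta \in (0, 2t]$ yields the claimed bound.

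The only mildly delicate step is the bookkeeping at the stopping level $J$: one must choose it so the residual contribution is genuinely of order $\delta\sqrt{n}$ while losing only absolute constants in the integral. Everything else is routine union bounds and the standard Gaussian maximal inequality, so I would defer to the cited reference~\citet[Lemma A.2]{chatterjee2015matrix} for the explicit verification of the universal constants $12$ and $4$.
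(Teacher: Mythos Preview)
Your proof is correct and follows the standard Dudley chaining argument; the paper itself does not prove this statement but merely cites \citet[Lemma A.2]{chatterjee2015matrix}, which contains exactly the argument you have sketched. Your handling of the scales, the link bounds via the Gaussian maximal inequality, the conversion of the sum to an integral, and the stopping-level bookkeeping are all standard and accurate.
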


The first step in the proof of Theorem~\ref{simplexupbd} is to upper bound the covering number for the metric space of flows.
\begin{lemma}\label{lem:cover}
Fix $V > 0$ and a positive integer $n$. For any tree $T$ with
$n$ nodes and depth $h$ denote by  $\F_{V}(T) =:\F_{V}$ the set of flows
on $T$ where the root flow $\mu_1$ is no greater than $V$. For any $\epsilon > 0$,
define 
\begin{equation}
\label{eq:meps}
m_{\epsilon} = \left\lceil \frac{V^2 h}{\epsilon^2} \right\rceil.
\end{equation}
Then 
\begin{equation}
\log N(\epsilon,\F_{V}) \leq m_{\eps} \left(1 + \log\left(1 + \frac{n}{m_{\eps}}\right)\right).
\end{equation}
\end{lemma}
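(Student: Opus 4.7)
The plan is to apply Maurey's empirical method (sometimes called the probabilistic or sparsification method). The key observation is that each flow can be represented as a nonnegative combination of ``unit leakage'' flows, turning the covering problem into one about approximating convex combinations by sparse averages in Hilbert space.

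First, I would set up the leakage parametrization. For each node $j$, define $e^j \in \reals^n$ by $(e^j)_i = \mathbb{1}[j \in \mathrm{sub}(i)]$, so that $e^j$ is the flow obtained by leaking one unit at node $j$ and nothing elsewhere. Every $\mu \in \F_V$ admits the representation $\mu = \sum_{j=1}^n \lambda_j\, e^j$ where $\lambda_j = \mu_j - \sum_{k \in \C(j)} \mu_k \geq 0$ and $\sum_j \lambda_j = \mu_1 \leq V$. A direct count gives $\|e^j\|^2 = \mathrm{depth}(j)+1 \leq h+1$, which is the main geometric input.

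Next, I would run the Maurey argument. Set $e^0 := 0$, and let $J$ be the random index in $\{0,1,\ldots,n\}$ with $\P(J=j) = \lambda_j/V$ for $j \geq 1$ and $\P(J=0) = 1 - \mu_1/V$; then $\E[V\, e^J] = \mu$. Drawing $k$ independent copies $J_1,\ldots,J_k$ and setting $\hat\mu = \tfrac{V}{k}\sum_{l=1}^k e^{J_l}$, one obtains
\begin{equation*}
\E\|\hat\mu - \mu\|^2 = \frac{V^2}{k}\,\Var(e^J) \leq \frac{V^2}{k}\,\E\|e^J\|^2 \leq \frac{V^2(h+1)}{k}.
\end{equation*}
Choosing $k = m_\epsilon = \lceil V^2 h/\epsilon^2\rceil$ (or $\lceil V^2(h+1)/\epsilon^2\rceil$, with a harmless adjustment absorbed into the ceiling) ensures $\E\|\hat\mu-\mu\|^2 \leq \epsilon^2$, so some realization satisfies $\|\hat\mu - \mu\| \leq \epsilon$. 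Since $\hat\mu$ is itself a nonnegative combination of flows with root value at most $V$, it lies in $\F_V$; hence we have exhibited an internal $\epsilon$-cover of $\F_V$.

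Finally, I would count. Each approximator $\hat\mu$ is determined by an unordered multiset of size $k$ drawn from the $n+1$ atoms $\{e^0,e^1,\ldots,e^n\}$, so the cover has cardinality at most $\binom{n+k}{k} \leq (e(n+k)/k)^k$. Taking logarithms and substituting $k = m_\epsilon$ yields
\begin{equation*}
\log N(\epsilon,\F_V) \leq m_\epsilon\bigl(1 + \log(1 + n/m_\epsilon)\bigr),
\end{equation*}
as claimed. The main obstacle here is choosing the right atomic representation: unit leakage flows are canonical because each $\mu \in \F_V$ is a \emph{subconvex} combination of them, and crucially $\|e^j\|^2$ is controlled by the depth rather than the total number of nodes; without this observation Maurey would give only a cover of size depending on $n$ in place of $h$. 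Once this representation is in hand, the variance calculation and counting step are standard.
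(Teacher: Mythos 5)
Your proposal is correct and takes essentially the same route as the paper: parametrize flows by leaks, run Maurey's sparsification argument by drawing i.i.d.\ ``unit leakage'' atoms, and count the resulting lattice of sparse averages as $\binom{n+m}{m}$. The only real difference is in how the second moment is controlled. You compute $\E\|e^J\|^2 \le h+1$ directly from the fact that a unit leak at $j$ propagates only along the root-to-$j$ path; the paper instead works coordinate-by-coordinate via the recursion $\overline{\mu}^{m}_v = \sum_{k\in\C(v)}\overline{\mu}^m_k + \overline{l}^m_v$ and pairwise negative correlation, arriving at the same quantity $\sum_u d_u\, l^\mu_u \le V h$. Your version is cleaner and makes the role of the depth transparent in a single display; the paper's version is somewhat heavier but records the negative-correlation structure explicitly. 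One small point worth pinning down: your bound naturally produces $V^2(h+1)/k$, so to land on $m_\eps = \lceil V^2 h/\eps^2\rceil$ exactly you need to be explicit about the depth convention (root at depth $1$, so $\|e^j\|^2 = d_j \le h$) rather than wave it into the ceiling, since $x\bigl(1+\log(1+n/x)\bigr)$ is increasing in $x$ and a larger $k$ weakens the stated bound; the paper implicitly uses this convention when writing $d_i \le h$.
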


\begin{proof}
Let $h$ denote the height of the tree, let $\LL = \{i : \C(i) =
\emptyset\} \subset [n]$ denote
the set of leaf nodes, and let $\II = [n] - \LL$ be the set
of internal, or non-leaf nodes.  We first note that a flow $\mu \in
\F_V$  can be uniquely identified by the collection of
\textit{leaks} at the nodes, with the leak at node $i$ defined by
\begin{equation}
l^\mu_i = \mu_i - \sum_{j\in\C(i)} \mu_j.
\end{equation}
If $i\in\LL$ is a leaf, then $l^\mu_i = \mu_i$ and
we also call this the \textit{residue} of the flow at $i$.
By definition, 
\begin{equation}
\label{eq:leakeq}
\sum_{i\in\II\cup\LL} l^{\mu}_i \leq \mu_1;
\end{equation}
that is, the residues and leaks can together be no greater than the flow into the root node.

For any positive integer $m \in \Z_{+},$ we define a set of flows $\F_{V}^{m} \subset \F_{V}$ as
\begin{equation}
\F_{V}^{m} = \left\{\mu \in \F_{V}: \frac{m}{V} l^{\mu} =
(i_1,\dots,i_n),\;
\text{for some $(i_1,\dots,i_n) \in \Z_{+}^n$ with $i_1 + \dots + i_n \leq m$}\right\}.
\end{equation}

We now show using a probabilistic argument that $\F_{V}^{m}$ is a
$\left(V^2 h\right)/m$ covering set for $\F_{V}$.  Fixing a flow $\mu \in
\F_{V}$, we define a random flow $F$, whose distribution depends on
$\mu$, by specifying its leaks $l^F$ as follows:
\begin{equation}
l^F = \begin{cases}
 V e_i & \text{ with probability  $\displaystyle \frac{l^\mu_i}{V}$} \\[10pt]
  0    & \text{ with probability $\displaystyle 1- \frac{1}{V} \sum_{i} l_i^\mu$}.
\end{cases}
\end{equation}
Here $e_i$ denotes the $n$ dimensional vector with $1$ in the $i$th
entry and $0$ everywhere else. 
For any positive integer $m$, we now let 
$F_1,\dots,F_m$ be $m$ i.i.d. copies of $F$, and define the
mean flow
\begin{equation}
\overline{\mu}^m = \frac{1}{m} \left(F_1 + \cdots + F_m\right).
\end{equation}
Note that
\begin{equation}\label{ineq4}
\overline{\mu}^{m} \in \F_{V}^{m}\;\;\text{with probability $1$}.
\end{equation}

Consider now the expected Euclidean distance between $\mu$ and the random flow $\overline{\mu}^{m}$. We can write
\begin{align}
\E \|\overline{\mu}^{m} - \mu\|^2 &= 
\sum_{i=1}^n \E (\overline{\mu}^{m}_{i} -
\mu_i)^2 \\
&= \sum_{i} \Var (\overline{\mu}^{m}_{i}),
\label{ineq1}
\end{align}
where $\Var$ refers to the variance of a random variable.  The last
equality holds because $\E\overline{\mu}^m = \mu$; that is, $\overline{\mu}^m$ is unbiased for $\mu$.

Let $\overline{l}^m_{v}$ denote the leak of the flow $\overline{\mu}^{m}$
at node $v$.  Fix a leaf node $v \in \LL$. 
Then the leak $\overline{l}^m_v$ is a mean
of $m$ Bernoulli random variables, each taking the value $V$ with
probability $\frac{\mu_{v}}{V}$ and $0$ with the complementary
probability. Hence we have that
\begin{equation}\label{eq:ineq7}
\Var(\overline{\mu}^{m}_{v}) = \frac{V^2}{m} \frac{\mu_{v}}{V} \left(1
- \frac{\mu_{v}}{V}\right) \leq \frac{V \mu_v}{m}.
\end{equation}
More generally, we have that for any node $v$,
\begin{equation}
\overline{\mu}^{m}_v = \sum_{k \in \C(v)} \overline{\mu}^{m}_{k} +
\overline{l}^m_{v}
\end{equation} 
and hence
\begin{align}
\Var(\overline{\mu}^{m}_{v}) = \Var\left(\sum_{k \in \C(v)}
\overline{\mu}^{m}_{k} + \overline{l}^m_{v}\right) \leq 
\sum_{k \in \C(v)} \Var\left(\overline{\mu}^{m}_{k}\right) + \Var(\overline{l}^m_{v}),
\end{align}
where the inequality holds since the random variables
$\left(\overline{\mu}^{m}_{1},\dots,\overline{\mu}^{m}_{1},\overline{l}^m_{v}\right)$
are pairwise negatively correlated, by construction of the random flow
$\overline{\mu}^{m}$. Applying this argument recursively,
we have
\begin{equation}\label{ineq2}
\Var(\overline{\mu}^{m}_{v})  \leq \sum_{u \in \Sub(v)} \Var(\overline{l}^m_{u}),
\end{equation}
where $\Sub(v)$ denotes the subtree rooted at $v$.
For any non-leaf $i \in\II$, by similar reasoning as in~\eqref{eq:ineq7}, we have 
\begin{equation}
\Var(\overline{l}^m_{i}) \leq \frac{V}{m} l_{i}^\mu.
\end{equation}
Using this observation together with~\eqref{ineq1} and~\eqref{ineq2},
and denoting by $d_i \leq h$ the depth of node $i$, we conclude
\begin{align}
\E \|\overline{\mu}^{m} - \mu\|^2 
&= \sum_{i} \Var (\overline{\mu}^{m}_{i})\\
&\leq \sum_{i} d_i
\Var(\overline{l}^m_i) \\
&\leq \frac{V}{m} \sum_{i} d_i {l}_i^\mu \\
&\leq \frac{Vh}{m} \sum_{i} {l}_i^\mu \\
&\leq \frac{V^2 h }{m}.
\end{align}

The above result holds in expectation with respect to the random draw
$\overline{\mu}^{m}$, which by~\eqref{ineq4} always lies in the finite
set $\F_{V}^{m}$. Hence, we can assert the existence of an element
$\tilde{\mu} \in \F_{V}^{m}$ such that
\begin{equation}\label{ineq5}
\|\mu - \tilde{\mu}\|^2 \leq \frac{V^2 h}{m}.
\end{equation}
Since $\mu \in \F$ was arbitrarily chosen, we have shown that $\F_{V}^{m}$ is a $\left(V^2 h\right)/m$ covering set for $\F_{V}$.

Finally, note that the set $\F_{V}^{m}$ is in one-to-one correspondence with
the set $\{(i_1,\dots,i_n) \in \Z_{+}^n: i_1 + \dots + i_n \leq m\}$.
By standard combinatorics, we have
\begin{equation}\label{ineq6}
|\F_{V}^{m}| = \binom{n + m}{m} \leq  \left(\frac{e (n +
  m)}{m}\right)^m.
\end{equation}
For any given $\epsilon > 0$ we then choose $m = m_{\eps}$ 
according to \eqref{eq:meps} to deduce the statement of the lemma.
\end{proof}

\begin{remark}\rm
The idea of the above proof to demonstrate covering sets by
randomization is not new, and is sometimes referred to as ``Maurey's
argument.'' Our proof is a generalization of the proof in \citet[Lemma
  2.6.11]{vaartwellner96book} to the setting of flows on rooted trees.
\end{remark}


We are now ready to prove Theorem~\ref{simplexupbd}.
\begin{proof}[Proof of Theorem~\ref{simplexupbd}]
Fix $\mu \in \F$.  We use Theorem~\ref{chatthm} 
by first upper bounding the function
\begin{equation}
f_{\mu}(t)  := \E \left(\sup_{\nu \in \F: \|\nu - \mu\| \leq t} \left<Z, \nu - \mu \right> \right) - \frac{t^2}{2} 
\end{equation}
where $Z \sim N(0, \sigma^2 I_{n \times n})$, using Dudley's entropy
integral inequality. Note that the diameter of the set $\{\nu \in \F:
\|\nu - \mu\| \leq t\}$ is at most $2t$. 
Setting $\delta = 1/\sqrt{n}$ in Theorem~\ref{dudthm} we obtain 
\begin{align}
\E\left( \sup_{\nu \in \F: \|\nu - \mu\| \leq t} \left<Z,\nu - \mu\right>\right) &
\leq 4 \sigma + 12 \sigma \int_{\frac{1}{\sqrt{n}}}^{2t} \sqrt{\log
  N(\eps,\{\nu \in \F: \|\nu - \mu\| \leq t\})} d\eps \\
&\leq 4 \sigma + 12 \sigma \int_{\frac{1}{\sqrt{n}}}^{2t} \sqrt{\log N(\eps,\{\nu \in \F: \nu_{1} \leq \mu_{1} + t\})} d\eps
\end{align}
where the second inequality follows from the inclusion
\begin{equation}
\{\nu \in \F: \|\nu - \mu\| \leq t\} \subset \{\nu \in \F: \nu_{1} \leq \mu_{1} + t\}.
\end{equation}
Now we are in a position to use Lemma~\ref{lem:cover} since it gives
us upper bounds on log covering numbers of sets of the form $\{\nu \in
\F: \nu_{1} \leq \mu_{1} + t\}$. In particular, we have
\begin{align}
\nonumber
\log N(\eps,\{\nu \in \F: &\nu_{1} \leq \mu_{1} + t\})\\
 & \leq \left(1 +
\frac{(\mu_{1} + t)^2 h}{\eps^2}\right) \left[1 + \log\left(1 +
  \frac{n}{1 + (\mu_{1} + t)^2 h/\eps^2}\right)\right] \\
 & \leq \frac{(\mu_{1} + t)^2 h}{\eps^2} \left[1 + \log\left(1 + \frac{n
    \eps^2}{(\mu_{1} + t)^2 h}\right)\right] + (1 + \log(n + 1))
\end{align}
where we have used the facts that $\lceil x \rceil \leq x + 1$ and 
$x\left(1 + \log(1 + c/x)\right)$  is a nondecreasing function of
$x\in \R_{+}$ for $c> 0$. 
The simple inequality $\sqrt{a + b} \leq \sqrt{a} + \sqrt{b}$ now gives
\begin{equation}
\E\left( \sup_{\nu \in \F: \nu_{1} \leq \mu_{1} + t} \left<Z,\nu -
\mu\right>\right)
 \leq 4 \sigma + 12 \sigma \left((\mu + t) \sqrt{h} I + 2t \sqrt{1 + \log(n + 1)}\right)
\end{equation} 
where $I$ is the integral
\begin{align*}
I &= \int_{0}^{2t}
\frac{1}{\epsilon} \left(1 + \log\left[1 + \frac{n \eps^2}{(\mu_1 + t)^2
    h}\right]\right)^{1/2} d\eps\\
& \leq C\left(1 + \log\left[1 + 4\frac{n t^2}{(\mu_1 + t)^2 h}\right]\right)^{3/2}
\end{align*}
where the inequality follows from elementary calculus.
We thus have
\begin{align}
\nonumber 
\E\Bigl( &\sup_{\nu \in \F: \nu_{1} \leq \mu_{1} + t} \left<Z,\nu - \mu\right> \Bigr)\\ 
& \leq 4 \sigma + 12 \sigma C (\mu_1 + t) \sqrt{h}
\left(1 + \log\left[1 + 4\frac{n t^2}{(\mu_1 + t)^2 h}\right]\right)^{3/2} + 24
\sigma t \sqrt{1 + \log(n + 1)} \\
& \leq 4 \sigma + 12 \sigma C (\mu_1 + t) \sqrt{h} \left(1 +
\log\left[1 + 4n\right]\right)^{3/2} 
+ 24 \sigma t \sqrt{1 + \log(n + 1)}
\end{align} 
where the second inequality uses 
${t^2}/{(\mu_1 + t)^2} \leq 1$ and $h \geq 1$.
Therefore, 
\begin{equation}\label{fmut}
f_{\mu}(t) \leq 4 \sigma + 12 \sigma C (\mu_1 + t) \sqrt{h} \left(1 +
\log\left[1 + 4n\right]\right)^{3/2} + 24 \sigma t \sqrt{1 + \log(n +
  1)} - \frac{t^2}{2} \equiv g(t).
\end{equation}
Letting $t^{*}$ be the larger root of the quadratic function $g(t)$, we have 
\begin{equation}
f_{\mu}(t^{*}) \le g(t^{*}) = 0.
\end{equation} 
After some algebraic manipulation one can upper bound $t^{*2}$
according to 
\begin{equation*}
t^{*2} \leq C \sigma^2 h (1 + \log n)^3 + C \sigma \mu_1 \sqrt{h} (1 + \log n)^{3/2}.
\end{equation*}
Therefore equation~\eqref{cim} of Theorem~\ref{chatthm} implies that
\begin{equation}
\E \|\hat{\mu} - \mu\|^2 \leq C\left(\sigma^2 h (1 + \log n)^3 + \sigma \mu_1 \sqrt{h} (1 + \log n)^{3/2} \right),
\end{equation}
which completes the proof of the theorem.
\end{proof}

\subsection{Proof of Theorem~\ref{isominimax}: Minimax lower bound for
monotone sequences}
We shall use Assouad's lemma to prove Theorem~\ref{isominimax}. The following version of Assouad's Lemma is a consequence of Lemma 24.3 of~\cite{van2000asymptotic}.  
\begin{lemma}[Assouad]\label{suad}
Fix $V > 0$ and a positive integer $d$. Suppose that, for each $\tau \in \{-1, 1\}^d$, there is an associated $g^{\tau} \in \M_{V}$. Then 
  \begin{equation*}
\inf_{\tilde{\mu}} \sup_{\mu \in \M_{V}} \E \|\tilde{\mu} - \mu\|^2 \geq \frac{d}{8} \min_{\tau \neq \tau'} \frac{\|{g}^{\tau} - {g}^{\tau'}\|^2}{\ham(\tau,\tau')} \min_{\ham (\tau, \tau') = 1} \left(1 - \|\P_{{g}^{\tau}} - \P_{{g}^{\tau'}}\|_{TV} \right), 
  \end{equation*}
where $\ham(\tau, \tau') := \sum_{i=1}^d I\{\tau_i \neq \tau'_i\}$ denotes the Hamming distance between $\tau$ and $\tau'$ and $\|\cdot \|_{TV}$ denotes the total variation distance. The notation $\P_{{g}}$ for $g \in \M_{V}$ refers to the joint distribution of $y_{i} = g_{i} + \epsilon_{i}$, for $1 \leq i \leq n$ when $\epsilon_{i}$ are independent normally distributed random variables with mean zero and variance $\sigma^2$. 
\end{lemma}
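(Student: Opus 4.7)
The plan is to prove Assouad's lemma by the standard testing-based reduction. Since each $g^{\tau}$ lies in $\M_{V}$ by assumption, $\sup_{\mu \in \M_{V}} \E \|\tilde\mu - \mu\|^2 \geq 2^{-d} \sum_{\tau} \E_{g^{\tau}} \|\tilde\mu - g^{\tau}\|^2$, so it suffices to lower bound the Bayes risk under the uniform prior on $\{-1,1\}^d$.

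The first step is to convert any estimator $\tilde\mu$ into a decoder $\hat\tau := \argmin_{\tau \in \{-1,1\}^d} \|\tilde\mu - g^{\tau}\|$. The triangle inequality together with the definition of $\hat\tau$ yields $\|\tilde\mu - g^{\tau}\| \geq \tfrac{1}{2}\|g^{\hat\tau} - g^{\tau}\|$ on the event $\{\hat\tau \neq \tau\}$, and writing $\beta := \min_{\tau \neq \tau'} \|g^{\tau} - g^{\tau'}\|^2/\ham(\tau,\tau')$ gives the pointwise bound
\begin{equation*}
\|\tilde\mu - g^{\tau}\|^2 \;\geq\; \tfrac{1}{4}\|g^{\hat\tau} - g^{\tau}\|^2 \;\geq\; \tfrac{\beta}{4}\,\ham(\hat\tau,\tau) \;=\; \tfrac{\beta}{4}\sum_{i=1}^d \mathbf{1}\{\hat\tau_i \neq \tau_i\}.
\end{equation*}
Taking expectations under $\P_{g^{\tau}}$ and averaging over $\tau$ with the uniform prior yields
\begin{equation*}
2^{-d}\sum_{\tau} \E_{g^{\tau}}\|\tilde\mu - g^{\tau}\|^2 \;\geq\; \tfrac{\beta}{4} \sum_{i=1}^d 2^{-d}\sum_{\tau} \P_{g^{\tau}}(\hat\tau_i \neq \tau_i).
\end{equation*}

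The second step is a coordinate-wise Neyman--Pearson bound. For each $i$, pair each $\tau$ with its flip $\tau^{(i)}$ in coordinate $i$. Viewing $\hat\tau_i$ as a $\{-1,+1\}$-valued test for $\tau_i$ based on $Y$, the elementary total-variation testing inequality gives $\P_{g^{\tau}}(\hat\tau_i \neq \tau_i) + \P_{g^{\tau^{(i)}}}(\hat\tau_i \neq \tau^{(i)}_i) \geq 1 - \|\P_{g^{\tau}} - \P_{g^{\tau^{(i)}}}\|_{TV}$. Summing this identity over the $2^{d-1}$ neighbor pairs and taking the minimum over neighbors produces
\begin{equation*}
2^{-d}\sum_{\tau} \P_{g^{\tau}}(\hat\tau_i \neq \tau_i) \;\geq\; \tfrac{1}{2}\min_{\ham(\tau,\tau')=1}\bigl(1 - \|\P_{g^{\tau}} - \P_{g^{\tau'}}\|_{TV}\bigr).
\end{equation*}
Combining this with the preceding display, summing over $i = 1,\ldots,d$, and taking the infimum over $\tilde\mu$ delivers the claimed constant $d/8$.

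The main subtlety is the testing step: $\hat\tau_i$ is a functional of the full data $Y$ that couples all $d$ coordinates, so one must confirm that it serves as an admissible decision rule for the two-point problem $\P_{g^{\tau}}$ vs.~$\P_{g^{\tau^{(i)}}}$. This is immediate since the error sum depends only on the two marginal laws of $Y$, and the total-variation lower bound applies to any measurable $\{-1,+1\}$-valued rule regardless of how it was constructed. As the paper notes, the entire argument is packaged in Lemma~24.3 of~\cite{van2000asymptotic}, from which the stated lemma follows directly.
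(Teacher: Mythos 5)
Your proof is correct. The paper does not actually prove this lemma---it simply invokes Lemma~24.3 of \cite{van2000asymptotic}---and your argument is precisely the standard derivation underlying that cited result: reduce the supremum to the uniform Bayes risk over $\{-1,1\}^d$, decode to the nearest $g^{\tau}$ to convert estimation error into Hamming error, and bound each coordinate's error probability by the two-point total-variation testing bound over neighboring pairs, which assembles into the stated $\tfrac{d}{8}$ constant. (One cosmetic remark: the inequality $\|\tilde\mu-g^{\tau}\|\ge\tfrac12\|g^{\hat\tau}-g^{\tau}\|$ holds everywhere, not only on $\{\hat\tau\neq\tau\}$, and your subsequent pointwise display implicitly uses this; since both sides vanish when $\hat\tau=\tau$, nothing is lost.)
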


\begin{proof}[Proof of Theorem~\ref{isominimax}]
Fix any integer $1 \leq k \leq n$ and define $m = \lfloor n/k\rfloor$. Define the vector $\mu^*_i = Vi/n$. For any $\tau \in \{0,1\}^k$ define the vector $\mu^{\tau}$ in the following manner:
\begin{equation}\label{assouadpacking}
\mu^{\tau}_j =
\begin{cases}
\mu^*_{(i - 1)m + 1}& \mbox{if} \:\:\tau_i = 0, (i - 1)m < j \leq im \\ \mu^*_{im}& \mbox{if} \:\:\tau_i = 1, (i - 1)m < j \leq im.
\end{cases}
\end{equation}
If $j > mk,$ then define $\mu^{\tau}_j = V$ for any $\tau \in \{0,1\}^k$. It is clear that $\mu^{}\tau \in \M_{V}$. Now note
\begin{equation*}
\|{\mu}^{\tau} - {\mu}^{\tau'}\|^2 = \frac{V^2}{n^2} \ham(\tau,\tau') (1^2 + 2^2 + \dots + m^2).
\end{equation*}
It is now not hard to see that
\begin{equation}\label{key}
\frac{V^2 m^3}{3 n^2} \ham(\tau,\tau') \leq \|{\mu}^{\tau} - {\mu}^{\tau'}\|^2 = \frac{V^2 m^3}{n^2} \ham(\tau,\tau').
\end{equation}
Also by Pinsker's inequality we have 
\begin{equation*}
(\|\P_{{\mu}^{\tau}} - \P_{{\mu}^{\tau'}}\|_{TV})^2 \leq \frac{1}{2} \textit{KL}(\P_{\mu^{\tau}},\P_{\mu^{\tau'}}) = \frac{1}{4 \sigma^2} \|{\mu}^{\tau} - {\mu}^{\tau'}\|^2 \leq \frac{V^2 m^3}{4 \sigma^2 n^2} \ham(\tau,\tau'),
\end{equation*}
where we used~\eqref{key} in the last inequality. An application of
Assouad's Lemma, along with the last two equations, now yields the minimax lower bound
\begin{equation*}
\inf_{\tilde{\mu}} \sup_{\mu \in \M_{V}} \E \|\tilde{\mu} - \mu\|^2 \geq \frac{k}{8} \frac{V^2 m^3}{3 n^2} \big(1 - \sqrt{\frac{V^2 m^3}{4 \sigma^2 n^2}}\big) \geq \frac{V^2 m^2}{48 n} \big(1 - \sqrt{\frac{V^2 m^3}{4 \sigma^2 n^2}}\big) = r_m,
\end{equation*}
where we have used $km \geq n/2$ in the last inequality. 

Note that
the last equation gives a minimax lower bound depending on $m$,
which can be chosen to be any positive integer not bigger than
$n$. Observe, however, that choosing $m = 1$ results in the degenerate
case where all the $\mu^{\tau}$ defined in~\eqref{assouadpacking} are the same vector.
We can therefore write the minimax lower bound in the form
\begin{equation}\label{keyp}
R_{n,V,\sigma} = \inf_{\tilde{\mu}} \sup_{\mu \in \M_{V}} \E \|\tilde{\mu} - \mu\|^2 \geq \max_{2 \leq m \leq n} \frac{V^2 m^2}{48 n} \big(1 - \sqrt{\frac{V^2 m^3}{4 \sigma^2 n^2}}\big).
\end{equation}
We now have three cases to investigate.
\begin{enumerate}
\item \textbf{$n V^2 \leq \min\{n \sigma^2,\, (\sigma^2)^{2/3} (nV^2)^{1/3}\}$}: 
In this case, we have $nV^2 \leq \sigma^2$. In this case set $m = n$ in the right side of~\eqref{keyp} to get the minimax lower bound $$R_{n,V,\sigma} \geq \frac{n V^2}{96}.$$

\item \textbf{$n \sigma^2 \leq \min\{nV^2,\, (\sigma^2)^{2/3} (nV^2)^{1/3}\}$}: In this case we have $n \sigma \leq V$. This implies $R_{n,V,\sigma} \geq R_{n,n \sigma/(2 \sqrt{2}),\sigma}$. Now a lower bound for $R_{n,n \sigma/(2 \sqrt{2}),\sigma}$ can be obtained by setting $m = 2$ in~\eqref{key} to obtain $$R_{n,V,\sigma} \geq \frac{n \sigma^2}{192}.$$

\item \textbf{$(\sigma^2)^{2/3} (nV^2)^{1/3} < \min\{n \sigma^2,\, nV^2\}$}: In this case we have $V < n \sigma$ and $\sigma^2 < n V^2$. We further subdivide this case into two subcases. 

\begin{enumerate} 
\item[a)] Suppose $\sigma^2 n^2/V^2 < 8$. Then we can write
\begin{align*}
R_{n,V,\sigma} \geq R_{n,n \sigma/(2 \sqrt{2}),\sigma} \geq \frac{n \sigma^2}{192} \geq \frac{(\sigma^2)^{2/3} (nV^2)^{1/3}}{192}.
\end{align*}
The first inequality follows from setting $m = 2$ in~\eqref{key} and the second inequality follows because $(\sigma^2)^{2/3} (nV^2)^{1/3} \leq n \sigma^2.$

\item[b)] Suppose $\sigma^2 n^2/V^2 \geq 8$. We now set $m = \lfloor \big(\sigma^2 n^2)/V^2\big)^{1/3}\rfloor$. We then have $m \geq 2$ and also we must have $m \leq n$ because $\sigma^2 < n V^2$. Hence, $m$ is a feasible choice to be used in~\eqref{key}. We now use our choice of $m$ in~\eqref{key} to obtain $$R_{n,V,\sigma} \geq \frac{(\sigma^2)^{2/3} (n V^2)^{1/3}}{192}.$$
Here we have used the fact that $m \geq \big(\sigma^2
n^2)/V^2\big)^{1/3}/2$ and $\frac{V^2 m^3}{4 \sigma^2 n^2} \geq
1/4$. The first fact is true because $m \geq 2$ and the second fact
follows trivially from the definition of $m$.  
\end{enumerate}
\end{enumerate}
This finishes the proof of the theorem.
\end{proof}

\subsection{Proof of Theorem~\ref{minimaxisomain}: Minimax lower bound for deep trees, $\alpha \leq \frac{1}{3}$}
\begin{proof}[Proof of Theorem~\ref{minimaxisomain}]
Denote the subset of flows in $\F(\T_{\alpha,n})$ with root value at
most $V$ by $\F_{n,\alpha,V}$. Recall $m =
\lceil n^{\alpha}\rceil$. Fix any $(v_1,\dots,v_m)$ such that $\sum_{i
  = 1}^{m} v_i \leq V$. Consider the space of flows
$\F(n,V,(v_1,\dots,v_m))$ where the root is set at $V$ and its $m$
children are set at $(v_1,\dots,v_m)$. Clearly
$\F(n,V,(v_1,\dots,v_m)) \subset \F_{n,\alpha,V}$. Hence we have
\begin{equation*}
\inf_{\tilde{\mu}} \sup_{\mu \in \F_{n,\alpha,V}} \E \|\tilde{\mu} - \mu\|^2 \geq \inf_{\tilde{\mu}} \sup_{\mu \in \F(n,V,(v_1,\dots,v_m))} \E \|\tilde{\mu} - \mu\|^2,
\end{equation*}
and it suffices to lower bound the right side of the above
equation. Now the estimation problem in $\F(n,V,(v_1,\dots,v_m))$ is
just $m$ separate isotonic regression problems. This means that the
inf sup term over $\F(n,V,(v_1,\dots,v_m))$ decomposes into a sum of
inf sup terms over each of the paths with monotonicity constraints.
Applying the minimax lower bound for isotonic regression given in
Theorem~\ref{isominimax} to each of these subproblems completes the proof.
\end{proof}

\begin{proof}[Proof of Corollary~\ref{isominimaxcoro}]
Set $v_i = \sigma V/n^{\alpha}$ in the lower bound given by
Theorem~\ref{minimaxisomain}. Since $n^{3 \alpha - 1} \leq V \leq n$,
\begin{equation*}
\min \{n^{1 - \alpha} \sigma^2, (v_i \sigma^2)^{2/3} n^{(1 - \alpha)/3}, n^{1 - \alpha} v_i^2\} = (v_i \sigma^2)^{2/3} n^{(1 - \alpha)/3}.
\end{equation*}
Theorem~\ref{minimaxisomain} then implies that a valid minimax lower
bound in terms of 
the sum of squared errors $n^{\alpha} (v_i \sigma^2)^{2/3} n^{(1 - \alpha)/3} = (V \sigma^2)^{2/3} n^{1/3}$ by the choice of $v_i$.
\end{proof}

\subsection{Proof of Theorem~\ref{simpminimax}: Minimax lower bound for deep trees, $\alpha \geq \frac{1}{3}$}
The proof of Theorem~\ref{simpminimax} is very similar to that of
Theorem~\ref{simplexminimax},
 where we use the Varshamov-Gilbert Lemma~\ref{vg} and Fano's
 lemma~\ref{fano}.  The details are omitted.

\subsection{Proof of Theorem~\ref{minimaxupbd}: Tightness of the
  minimax lower bound, $\alpha \geq \frac{1}{3}$}
The following lemma gives an upper bound to the minimax rate in a general Gaussian denoising problem where the mean is known to lie in a set $K$. The upper bound is information-theoretic and is expressed in terms of covering numbers of $K$. 
\begin{theorem}\label{minimaxupbdp}
Let $y \sim N(\theta^*,\sigma^2 I)$ be a $n$ dimensional random
vector. Let $K \subset \R^n$ and $\theta^* \in K$. Then
\begin{equation*}
\inf_{\tilde{\theta}} \sup_{\theta^* \in K} \E \|\tilde{\theta} - \theta^*\|^2 \leq \inf_{\epsilon > 0} \big(16 \sigma^2 \log N(\epsilon,K,\|.\|) + 3 \epsilon^2\big).
\end{equation*}
\end{theorem}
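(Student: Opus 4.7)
The plan is to construct an estimator that is the least squares estimator restricted to a finite $\epsilon$-net of $K$, and then control its risk by a standard ``basic inequality'' argument combined with a Gaussian supremum bound. Fix $\epsilon > 0$ and let $K_\epsilon \subset K$ be a minimal $\epsilon$-cover of $K$ in $\|\cdot\|$, so $|K_\epsilon| = N := N(\epsilon, K, \|\cdot\|)$. Define the estimator
\[
\tilde\theta = \argmin_{\theta \in K_\epsilon} \|y - \theta\|^2,
\]
and let $\theta^\epsilon \in K_\epsilon$ be a point satisfying $\|\theta^* - \theta^\epsilon\| \leq \epsilon$, which exists by the covering property. Writing $y = \theta^* + Z$ with $Z \sim N(0,\sigma^2 I)$, the optimality $\|y-\tilde\theta\|^2 \leq \|y-\theta^\epsilon\|^2$ expands (after cancelling $\|Z\|^2$) into the basic inequality
\[
\|\tilde\theta - \theta^*\|^2 \;\leq\; \|\theta^\epsilon - \theta^*\|^2 + 2\langle Z,\,\tilde\theta - \theta^\epsilon\rangle \;\leq\; \epsilon^2 + 2\langle Z,\,\tilde\theta - \theta^\epsilon\rangle.
\]

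Next I would control the stochastic term by reducing it to a supremum over the finite net. Define
\[
W \;=\; \max\Bigl\{0,\;\; \max_{\theta \in K_\epsilon,\,\theta \neq \theta^\epsilon}\; \frac{\langle Z,\,\theta - \theta^\epsilon\rangle}{\|\theta - \theta^\epsilon\|}\Bigr\}.
\]
Then $\langle Z,\tilde\theta - \theta^\epsilon\rangle \leq W\,\|\tilde\theta - \theta^\epsilon\| \leq W(\|\tilde\theta - \theta^*\| + \epsilon)$ by the triangle inequality. Letting $X = \|\tilde\theta - \theta^*\|$, the basic inequality becomes $X^2 \leq \epsilon^2 + 2WX + 2W\epsilon$. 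Applying the elementary AM-GM bounds $2WX \leq \tfrac{1}{2}X^2 + 2W^2$ and $4W\epsilon \leq \epsilon^2 + 4W^2$ successively gives the deterministic estimate
\[
X^2 \;\leq\; 3\epsilon^2 + 8 W^2.
\]

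Finally, $W$ is the positive part of the maximum of at most $N$ zero-mean Gaussian random variables, each with variance $\sigma^2$ (since the directions $(\theta - \theta^\epsilon)/\|\theta - \theta^\epsilon\|$ are unit vectors). A standard tail-integration argument, splitting the integral $\E W^2 = \int_0^\infty 2t\, P(W \geq t)\,dt$ at $t = \sigma\sqrt{2\log N}$ and using the union bound $P(W \geq t) \leq N\,e^{-t^2/(2\sigma^2)}$, yields $\E W^2 \leq 2\sigma^2 \log N$ (up to the universal additive slack that is absorbed into the constant $16$). Taking expectations in the deterministic bound gives $\E\|\tilde\theta - \theta^*\|^2 \leq 3\epsilon^2 + 16 \sigma^2 \log N(\epsilon,K,\|\cdot\|)$. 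Since this holds for every $\epsilon > 0$ and every $\theta^* \in K$, minimising over $\epsilon$ and taking the supremum in $\theta^*$ completes the proof.

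The main obstacle is really just bookkeeping: balancing the AM-GM constants so that the coefficient of $\epsilon^2$ comes out to $3$ and the coefficient of $\sigma^2 \log N$ to $16$, and confirming that the Gaussian supremum bound $\E W^2 \leq 2\sigma^2 \log N$ can be written in this clean form (which requires either assuming $N$ is at least a small constant, or absorbing a harmless additive $\sigma^2$ into the leading $\log N$ term). No concentration machinery beyond a one-line union bound is needed.
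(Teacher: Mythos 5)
Your proposal is correct in substance but follows a genuinely different route from the paper. You use the same estimator (least squares over an $\epsilon$-net $K_\epsilon$) but analyze it via the classical ``basic inequality'' $\|\tilde\theta-\theta^*\|^2 \leq \epsilon^2 + 2\langle Z, \tilde\theta-\theta^\epsilon\rangle$, a reduction of the stochastic term to a finite Gaussian maximum $W$, AM--GM balancing, and a union-bound tail integration for $\E W^2$. The paper instead runs an exponential-moment (aggregation/MDL-style) argument: starting from $G(\hat\theta_F) \leq \sum_{\mu\in F} G(\mu)\exp\bigl(\alpha\|y-\hat\theta_F\|^2-\alpha\|y-\mu\|^2\bigr)$, replacing $\hat\theta_F$ by a fixed $\theta'$ on the right, computing the Gaussian moment generating function exactly, choosing $G(\mu)=\exp\bigl(\|\mu-\theta^*\|^2/16\sigma^2\bigr)$ and $\alpha = 1/(8\sigma^2)$, and finishing with Jensen's inequality. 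Your approach is more elementary and transparent; the paper's approach is what delivers the stated constants exactly, since it yields $\E\|\hat\theta_F-\theta^*\|^2 \leq 16\sigma^2\log|F| + 3\min_{\mu\in F}\|\theta^*-\mu\|^2$ with no additive slack.

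The one place where your argument falls short of the statement as written is the claim $\E W^2 \leq 2\sigma^2\log N$. The tail integration you describe actually gives $\E W^2 \leq 2\sigma^2\log N + 2\sigma^2$ (the second term is the integral beyond the split point $t_0=\sigma\sqrt{2\log N}$), so your final bound is $3\epsilon^2 + 16\sigma^2\log N + 16\sigma^2$. The extra $16\sigma^2$ cannot be absorbed into $16\sigma^2\log N$ unless $N\geq e$, and even then only at the cost of doubling the constant to $32$. You flag this yourself, and it is immaterial for every use of the theorem in the paper (only rates matter downstream), but if you want the literal constants $16$ and $3$, the paper's exponential-moment computation is the way to get them.
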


\begin{proof}
Let $F \subset K$ be a finite subset. Define the least squares estimator over the finite set $F$ as
\begin{equation*}
\hat{\theta}_{F} = \argmin_{\mu \in F} \|y - \mu\|^2.
\end{equation*}
We start with the following inequality which holds for any nonnegative function $G:F \rightarrow \R_{+}$:
\begin{equation*}
G(\hat{\theta}_{F}) \leq \sum_{\mu \in \F} G(\mu) \exp\big(\alpha \|y - \hat{\theta}_{F}\|^2 - \alpha \|y - \mu\|^2\big).
\end{equation*}
Because $\hat{\theta}_{F}$ is the least squares estimator, we can
replace it by any arbitrary but fixed and data-independent 
$\theta' \in F$ in the right side of the above inequality. Then taking expectations on both sides we obtain
\begin{equation}\label{eq1}
\E\:G(\hat{\theta}_{F}) \leq \sum_{\mu \in \F} G(\mu) \E \exp\big(\alpha \|y - \theta'\|^2 - \alpha \|y - \mu\|^2\big).
\end{equation}
Writing $y = \theta^* + z$ where $z \sim N(0,\sigma^2 I)$, some elementary algebra gives us
\begin{equation*}
\|y - \theta'\|^2 - \|y - \mu\|^2 = \|\theta^* - \theta'\|^2 - \|\theta^* - \mu\|^2 + 2 \langle z, \mu - \theta' \rangle.
\end{equation*}
Knowing the moment generating function of $z$ then lets us conclude 
\begin{equation*}
\E \exp\big(\alpha \|y - \theta'\|^2 - \alpha \|y - \mu\|^2\big) = \exp\big(\alpha \|\theta^* - \theta'\|^2 - \alpha \|\theta^* - \mu\|^2 + 2 \alpha^2 \sigma^2 \|\mu - \theta'\|^2\big).
\end{equation*}
The elementary inequality $\|\mu - \theta'\|^2 \leq 2 \|\mu -
\theta^{*}\|^2 + 2 \|\theta^* - \theta'\|^2$ applied to the last 
equation, together with \eqref{eq1}, gives us 
\begin{equation}
\E\:G(\hat{\theta}_{F}) \leq \sum_{\mu \in \F} G(\mu) \exp\big((4\alpha^2 \sigma^2 - \alpha) \|\mu - \theta^{*}\|^2 + (4\alpha^2 \sigma^2 + \alpha) \|\theta^* - \theta'\|^2\big).
\end{equation}
The choices $G(\mu) = \exp(\frac{\|\mu - \theta^{*}\|^2}{16
  \sigma^2})$ and $\alpha = \frac{1}{8 \sigma^2}$ then establish
the risk bound
\begin{equation*}
\E \exp(\frac{\|\hat{\theta}_{F} - \theta^{*}\|^2}{16 \sigma^2}) \leq |F| \exp(\frac{3}{16 \sigma^2} \|\theta^* - \theta'\|^2).
\end{equation*}
Using Jensen's inequality on the left side and taking logarithms yields
\begin{equation*}
\E\:\frac{\|\hat{\theta}_{F} - \theta^{*}\|^2}{16 \sigma^2} \leq \log |F| + \frac{3}{16 \sigma^2} \|\theta^* - \theta'\|^2.
\end{equation*}
Since $\theta' \in F$ was arbitrary we can actually conclude
\begin{equation*}
\E\:\frac{\|\hat{\theta}_{F} - \theta^{*}\|^2}{16 \sigma^2} \leq \log |F| + \min_{\mu \in F} \frac{3}{16 \sigma^2} \|\theta^* - \mu\|^2.
\end{equation*}
Now, if $F = F_{\epsilon}$ is chosen to be a $\epsilon$ cover for $K$,
we have
\begin{equation*}
\E\:\frac{\|\hat{\theta}_{F} - \theta^{*}\|^2}{16 \sigma^2} \leq \log N(\epsilon,K,\|.\|) + \frac{3}{16 \sigma^2} \epsilon^2.
\end{equation*}
Taking the infimum over $\epsilon > 0$ finishes the proof.
\end{proof}

\begin{remark}
This basic idea of the above result 
can be traced back to the paper~\cite{barron2008mdl} and the
references therein. 
A more general version of the above theorem can be found
in~\cite{chatterjee2014adaptation} (Theorem 1.2.2).
\end{remark}

\begin{proof}[Proof of Theorem~\ref{minimaxupbd}]
We use Lemma~\ref{lem:cover} to get an upper bound on the log covering
number of $\F_{n,\alpha,V}$
of the form
\begin{equation*}
\log N(\epsilon,\F_{n,\alpha,V},\|.\|) \leq (1 + \frac{V^2 n^{1 - \alpha}}{\epsilon^2}) (1 + \log(en)).
\end{equation*}
An application of Theorem~\ref{minimaxupbd} immediately yields an
upper bound to the minimax rate $R$ as
\begin{equation*}
\frac{R}{16 \sigma^2} \leq \inf_{\epsilon > 0} \big((1 + \frac{V^2 n^{1 - \alpha}}{\epsilon^2}) (1 + \log(en)) + \frac{3}{16 \sigma^2} \epsilon^2\big).
\end{equation*}
Setting $\epsilon = \big(\frac{16 \sigma^2 V^2 n^{1 - \alpha} \log(en)}{3}\big)^{1/4}$ we obtain
\begin{equation*}
R \leq 12 \big(\frac{16 \sigma^2 n^{1 - \alpha} \log(en)}{3}\big)^{1/2} \leq 21\:V \:\sigma\:n^{(1 - \alpha)/2} \log(en)^{1/2} + 16 \sigma^2 \big(1 + \log(en)\big)
\end{equation*}
as an upper bound to the risk.
\end{proof}

\subsection{Proof of Theorem~\ref{lselbd}: Tightness of the LSE upper
  bound, $\alpha \geq \frac{1}{3}$}
\label{zeroproof}

\begin{proof}
We will again prove the lower bound to the risk at the origin, taking
$\mu = 0$. 
Lemma~\ref{cone} implies the pointwise inequality
\begin{equation*}
\|\hat{\mu} - \mu\| = \sup_{\nu \in \F_{n,\alpha}: \|\nu\| \leq 1} \langle Z,\nu \rangle.
\end{equation*}
Therefore we can now write
\begin{equation}\label{gw}
\big(\E \sup_{\nu \in \F_{n,\alpha}: \|\nu\| \leq 1} \langle Z,\nu \rangle\big)^2 \leq \E \big(\sup_{\nu \in \F_{n,\alpha}: \|\nu\| \leq 1} \langle Z,\nu \rangle\big)^2 = \E \|\hat{\mu} - \mu\|^2.
\end{equation}
Thus, it suffices to lower bound the Gaussian width term $\E \sup_{\nu \in \F_{n,\alpha}: \|\nu\| \leq 1} \langle Z,\nu \rangle$.

Consider the case $1/3 \leq \alpha \leq 1/2$. For $1 \leq i \leq m$ define $S^{(i)} = \sum_{j = 1}^{l} Z^{(i)}_j$ and define the random signs 
\begin{equation*}
s^{(i)} = sign(S^{(i)}).
\end{equation*}
Now define a random flow $v$ in the following fashion. For each $1 \leq i \leq m$ and $1 \leq j \leq l$ define $$v^{(i)}_j = s^{(i)} \frac{1}{2 \sqrt{n}}.$$ Also define $v_{1} = 1/2$. It is easy to check that $v \in \F_{n,\alpha}$ and $\|v\| \leq 1$. Therefore, we can write 
\begin{align*}
\E \sup_{\nu \in \F_{n,\alpha}: \|\nu\| \leq 1} \langle Z,\nu \rangle
\geq \: \E \langle Z,v \rangle & = \E \big(\frac{Z_1}{2} + \frac{1}{2
  \sqrt{n}} \sum_{i = 1}^{m}|S^{(i)}|\big)  \\
& = \frac{1}{2 \sqrt{n}} \sum_{i = 1}^{m} \E |S^{(i)}|\\
& = \frac{m}{2 \sqrt{n}} 2 \pi \sqrt{l} \sigma \\&
= \pi \sigma n^{\alpha/2}.
\end{align*}
This is because $S^{(i)} \sim N(0,l\sigma^2)$ and $\E|Z| = 2 \pi \sigma$ where $Z \sim N(0,\sigma^2)$.
Using~\eqref{gw} then allows us to conclude
\begin{equation}\label{lb1}
\E \|\hat{\mu} - \mu\|^2 \geq \pi^2 \sigma^2 n^{\alpha}.
\end{equation}

Now let us consider the case when $1/2 \leq \alpha \leq 1$. Define a
random flow $v$ as follows. For each $1 \leq i \leq m$
define $$v^{(i)}_j = s^{(i)} \frac{1}{2 \sqrt{n}}\:\I\{i \leq
\sqrt{n}\}.$$ Also define $v_{1} = 1/2$. 
It is again easy to check that $v \in \F_{n,\alpha}$ and $\|v\| \leq 1$. Hence we have
\begin{align*}
\E \sup_{\nu \in \F_{n,\alpha}: \|\nu\| \leq 1} \langle Z,\nu \rangle
\geq \: \E \langle Z,v \rangle 
& = \E \big(\frac{Z_1}{2} + \frac{1}{2 \sqrt{n}} \sum_{i =
  1}^{\sqrt{n}} |S^{(i)}|\big)  \\
&=\frac{1}{2 \sqrt{n}} \sum_{i = 1}^{\sqrt{n}} \E |S^{(i)}| \\
&= \frac{\sqrt{n}}{2 \sqrt{n}} 2 \pi \sqrt{l} \sigma = \pi \sigma n^{(1 - \alpha)/2}.
\end{align*}
Now using~\eqref{gw} lets us conclude that
\begin{equation*}
\E \|\hat{\mu} - \mu\|^2 \geq \pi^2 \sigma^2 n^{(1 - \alpha)},
\end{equation*}
completing the proof of the theorem.
\end{proof}

\section{Simulations}
\label{sec:simulations}

In this section we present results from simulations to gain a
qualitative understanding of the rates of convergence of the least
squares estimator. We investigate the performance of the LSE in the family
of trees $\T_{n,\alpha}$ for various values of $\alpha$. Note that $\alpha = 1$
corresponds to the star graph. For each tree, we repeat the
denoising experiment $100$ times with sample size $n$ growing from
$9{,}000$ to $20{,}000$ in increments of $1000$. In each experiment we
obtain the squared error $\|\hat{\mu} - \mu\|^2$; hence for each sample
size our estimate of the expected sum of squares $\E \|\hat{\mu} -
\mu\|^2$ is an average of $100$ trials. We take the log of
these estimates and fit a linear regression. The slope
is an estimate of the exponent
of increase of $\E \|\hat{\mu} - \mu\|^2$, as
we expect $\E \|\hat{\mu} - \mu\|^2$ to increase like $n^{\nu}$ for some
$\nu$.

We performed simulations for $\alpha = 0.4,0.5,0.75,1$. The true flow
$\mu$ was selected by setting the root vertex
have value $\mu_1=1$. For any $\alpha$, the children of the root were set to
have value $\frac{1}{n^{\alpha}}$ and then the paths decreased from
$\frac{1}{n^{\alpha}}$ to $0$ in equal increments. Such a flow $\mu$ was
chosen because in the case of isotonic regression ($\alpha = 0$),
the mean vector which increases linearly from $0$ to $1$ in increments
of $1/n$ has LSE with error scaling according to
$n^{1/3}$, which is the worst case behavior;
see~\cite{chatterjee2015risk}. Below we show plots (on a log scale) for
the empirical squared error, averaged over $100$ trials, versus
sample size.

\begin{figure}[ht]
\begin{center}
\begin{tabular}{cc}
$\alpha=0.4$ & $\alpha = 0.5$ \\[-10pt]
\includegraphics[width=.4\textwidth]{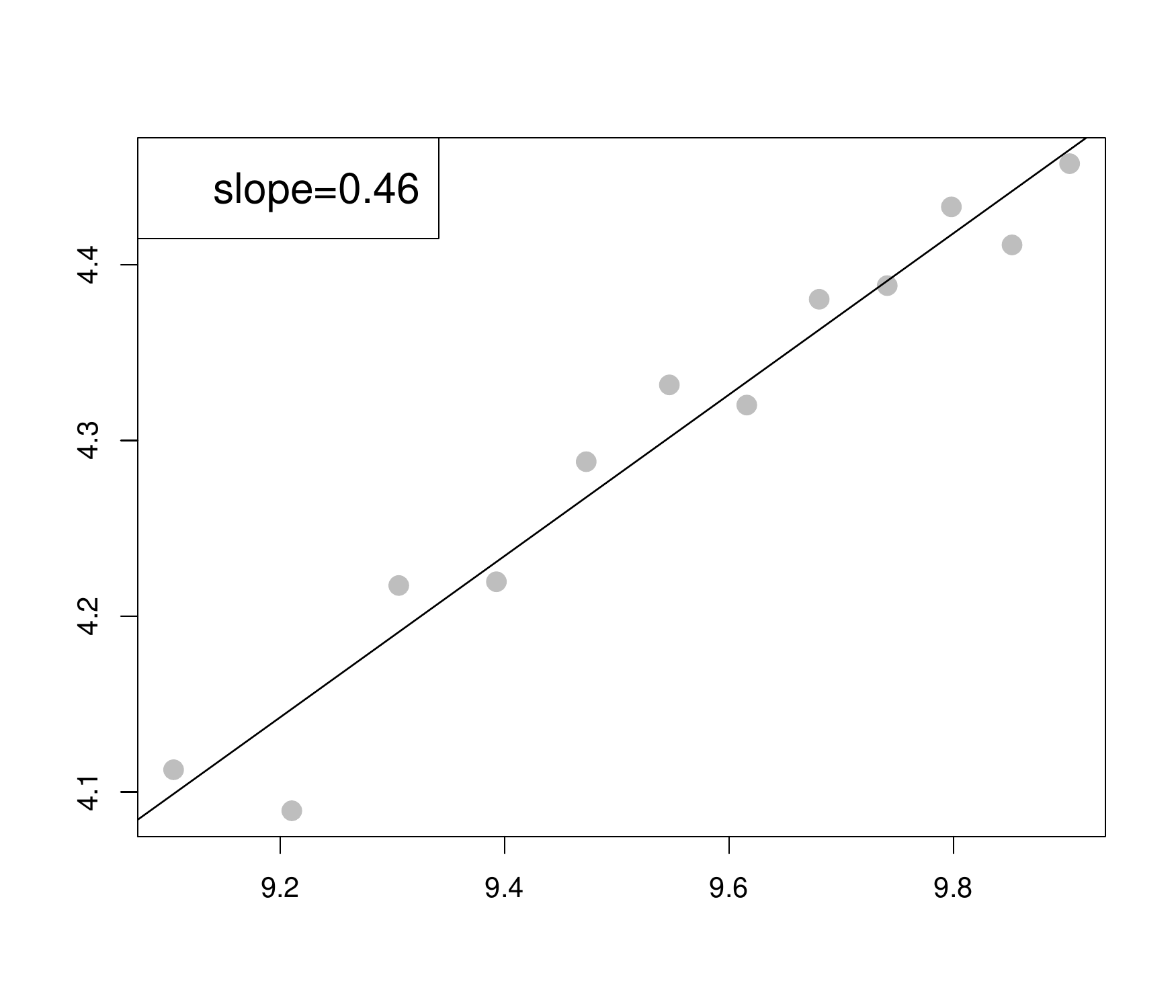} &
\includegraphics[width=.4\textwidth]{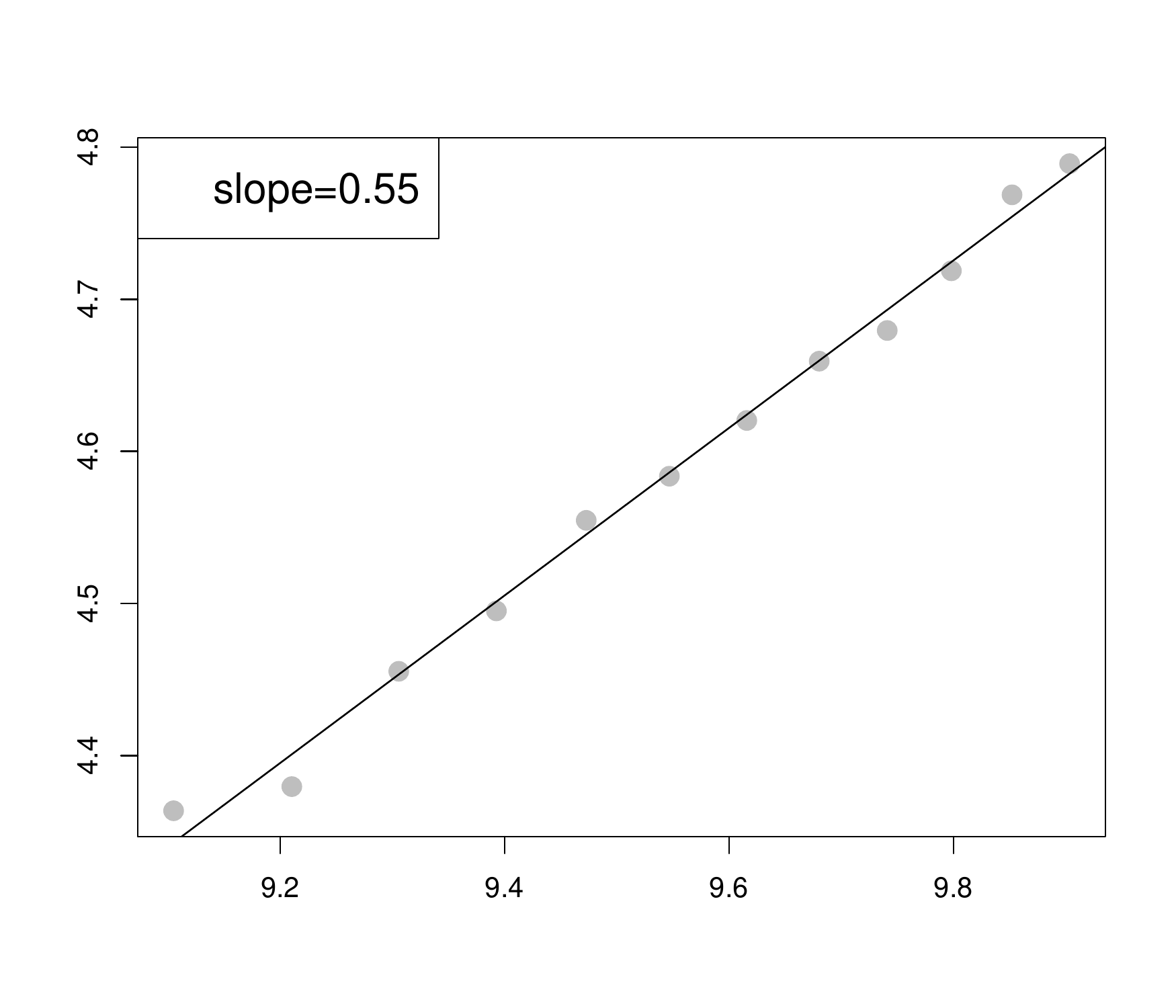} \\
$\alpha=0.75$ & $\alpha = 1$ \\[-10pt]
\includegraphics[width=.4\textwidth]{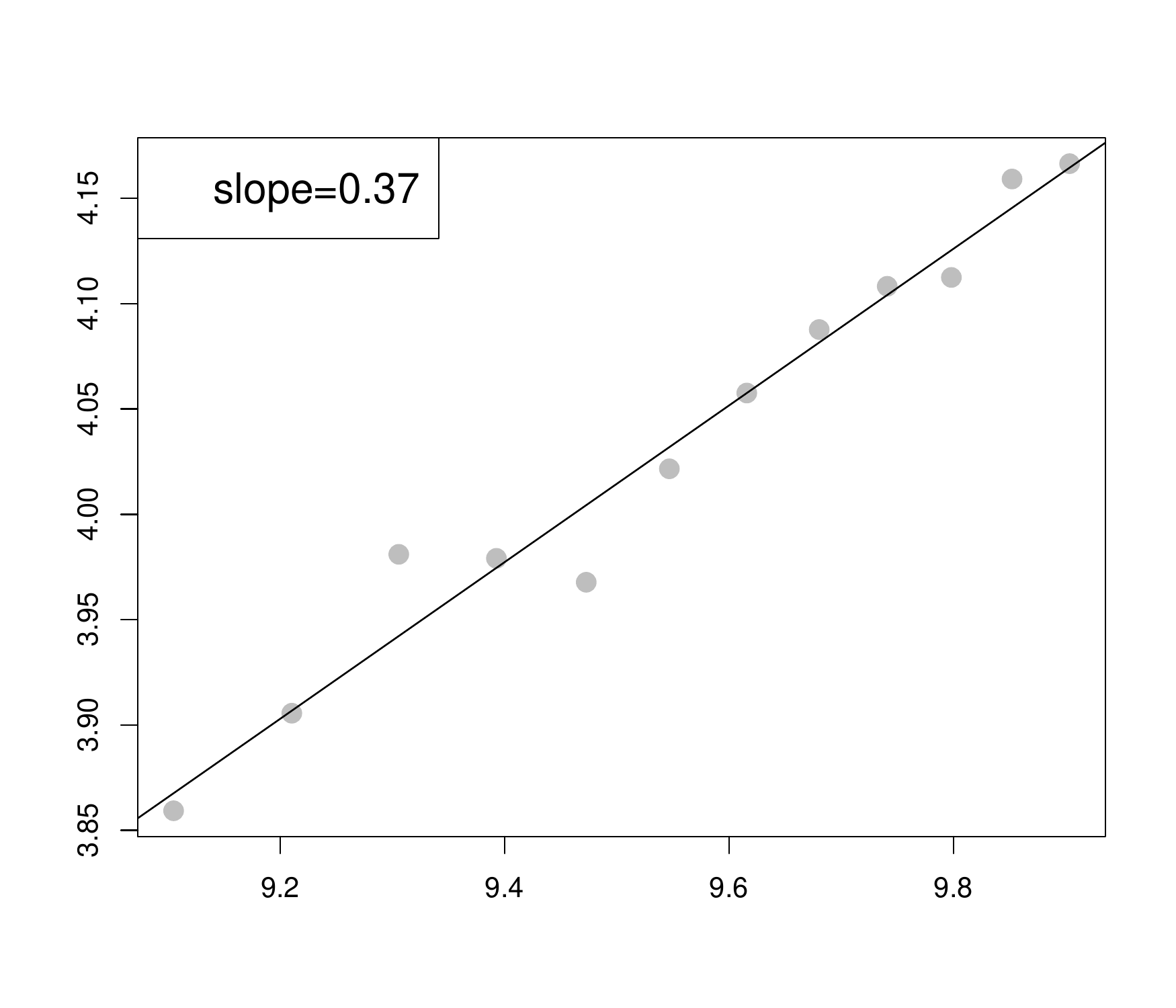} &
\includegraphics[width=.4\textwidth]{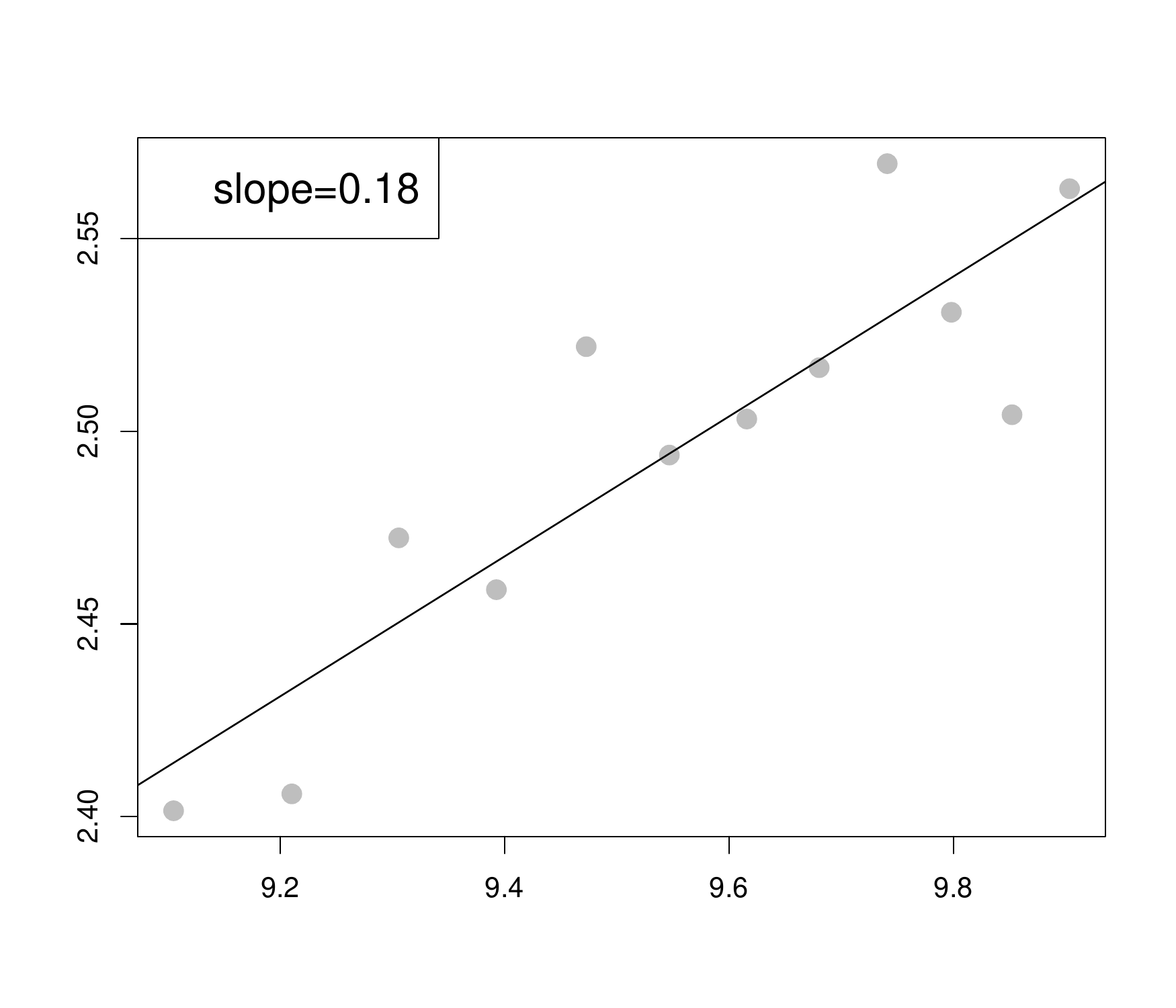} \\
\end{tabular}
\end{center}
\caption{
For each value of $\alpha$, a total of 100 denoising experiments
were run with the size $n$ of the tree growing from
$9,000$ to $20{,}000$ in increments of $1{,}000$. In each experiment we
obtain the squared error $\|\hat{\mu} - \mu\|^2$, and
average over the trials. The slope
is an estimate of the exponent
of increase of $\E \|\hat{\mu} - \mu\|^2$,
as the analysis shows that the expected
squared error increases $n^{\nu}$ for some
$\nu$. The simulations are consistent with the theoretical
findings that the risk bound behaves like $n^{1/2}$ around $\alpha =
0.5$, with the exponent in the bound decreasing linearly for $\alpha$
between $0.5$ and $1$. }
\end{figure}

Note that the upper bound to the expected squared error in our
theoretical analysis gives the exponent $0.4$, $0.5$, $0.25$ for $\alpha =
0.4$, $0.5$ and $0.75$, respectively, and gives logarithmic growth for
$\alpha = 1$. The estimated slopes for $\alpha = 0.4$ and $0.5$ are close to
$0.4$ and $0.5$, while the slopes are slightly higher than our analysis shows
$\alpha = 0.75$ and $1$. This may be because the
logarithmic factors in our risk bound inflates the simulated
slope.

We have simulated only for $\alpha$ larger than $0.4$. This is
because to be able to observe the correct rates in simulations 
for small $\alpha$ would require prohibitively large sample sizes. For
example, with $\alpha=0.1$ and $n < 20{,}000$ note that $n^{0.1} <
3$. Hence, the risk will behave very similarly to 
three separate isotonic regressions---we would observe rates
of convergence scaling like $n^{(1 - 0.1)/3} = n^{0.3}$. In order to
truly reflect the fact that the number of children of the root is
growing like $n^{0.1}$ would require impractically large sample sizes.
We observe, however, that a more computationally tractable estimator
for small~$\alpha$ is $\hat\mu_{\text{nat}}$, which fits the top part of the tree, which is
an $(m+1)$-node star, conditions on the estimated
values of the children, and then fits $m$ separate isotonic regressions.
This algorithms scales linearly.

We have performed the simulations using a generic QP solver from 
Mosek, and the code takes slightly less than a minute for 
one denoising experiment with $n = 20{,}000$. However, the
computation grows quickly for $n$ much larger than $20{,}000$. 
As shown in the plots, the simulations are consistent with our theoretical
findings.

\section{Discussion}

We have formulated a normal means problem on rooted trees, where the
constraints on the means mimic a fluid flowing down from the root with
possible leakage. We have studied the least squares estimator in this
setting, showing risk bounds that only depend on the height of the
tree. For trees with bounded or logarithmically growing diameter, this
gives nearly sharp bounds, which match the minimax lower bounds.  We
have also studied the flow estimation problem for a family of trees
that interpolates between a single path and a star graph.  Here we
find that the rate of convergence of the risk for the LSE is not
monotonic in the depth of the tree. Moreover, we find a gap between
the LSE and the minimax rates. To obtain matching upper and lower
bounds over this family of trees for both the LSE and minimax
estimators, we employ a range proof techniques, as summarized in
Table~1.  Our findings are displayed graphically in Figure~1.

A natural direction for future study is automatic adaptivity of the
LSE to flows that are piecewise constant on long paths.  Such
adaptivity has recently been shown to hold for isotonic regression
\citep{chatterjee2015risk}. Note that we have already obtained a
result in this direction in Theorem~\ref{isotonicupbd}, the proof of
which proceeds by first showing fast rates for flows that are
piecewise constant, and then using an approximation argument.  It
would also be interesting to pursue fast algorithms to denoise flows
on rooted trees.  In particular, one could explore connections to
trend filtering~\citep{wang2014trend}, and the use of penalization and
convex relaxations for flow estimation. For the path graph, the well
known pooled adjacent violators algorithm gives a linear time
procedure \citep{RWD88}. In case the tree is a star graph with root
and $n$ children, the problem of computing the LSE is related to
computing the projection onto a probability simplex in $\R^n$. It may
thus be possible to compute the LSE for certain trees $T_{\alpha, n}$
in $O(n)$ time by slightly modifying the algorithm
in~\cite{duchi2008efficient}.  But our results also present the
challenge of closing the gap between the LSE and the minimax lower
bound with computationally efficient estimators.

\section*{Acknowledgements}
Research supported in part by ONR grant
11896509 and NSF grant DMS-1513594.

\setlength{\bibsep}{8pt}
\bibliographystyle{apalike}
\bibliography{refer}

\end{document}